\def\RR{{\mathbb R}}
\def\SS{{\mathbb S}}
\def\ZZ{{\mathbb Z}}
\def\cal{\mathcal}
\def\a{\alpha}
\def\d{\delta}
\def\l{\lambda}
\def\L{\Lambda}
\let\temp\phi
\let\phi\varphi
\let\varphi\temp
\def\vn{\varnothing}
\def\ones{\mathbbm{1}}
\def\conv{\operatorname {conv}}
\def\supp{\operatorname {supp}}
\def\spanset{\operatorname {span}}
\def\cut{\operatorname {CUT}_d^\square}
\theoremstyle{definition}
\newtheorem{definition}{Definition}[section]
\newtheorem{example}[definition]{Example}
\theoremstyle{plain}
\newtheorem{theorem}[definition]{Theorem}
\newtheorem{lemma}[definition]{Lemma}
\newtheorem{proposition}[definition]{Proposition}
\newtheorem{corollary}[definition]{Corollary}
\newtheorem*{corollary*}{Corollary}
\newtheorem{conjecture}[definition]{Conjecture}
\newtheorem{remark}[definition]{Remark}
\newcommand{\catherine}[1]{{{\color{black}{{#1}}}}}
\newcommand{\J}{\mathcal{J}}
\begin{document}

\title{Graphical Designs and Gale Duality}
\author{Catherine Babecki and Rekha R. Thomas}
\address{Department of Mathematics, 
University of Washington,\\ Box 354350, 
Seattle, WA 98195}
\email{cbabecki@uw.edu, rrthomas@uw.edu}
\thanks{
Research partially supported by the Walker Family Endowed Professorship in Mathematics at the University of Washington\\
{\em Corresponding Author}: Rekha R. Thomas\\ 
{\em Affiliation}: Department of Mathematics, University of Washington, Box 354350, Seattle, WA 98195, USA\\ 
{\em Email address}: rrthomas@uw.edu}

\keywords{Graphical Designs, Graph Laplacian, Gale Duality, Polytopes,  Quadrature Rules, Eigenpolytopes, Hamming Code, Stable Sets, Graph Sampling.}
\subjclass[2020]{05C50, 52B35, 90C57, 68R10} 
%\maketitle
%\vspace{-.2 in}
\begin{abstract}
A graphical design is a subset of graph vertices such that the weighted averages of certain graph eigenvectors over the design agree with their global averages. We use Gale duality to show that positively weighted graphical designs in regular graphs are in bijection with the faces of a generalized eigenpolytope of the graph. This connection can be used to organize, compute and optimize designs. We illustrate the power of this tool on three families of Cayley graphs -- cocktail party graphs, cycles, and graphs of hypercubes -- by computing or bounding the smallest designs that average all but the last eigenspace in frequency order. 
\end{abstract}
\maketitle

% notation: 

% n= #vertices = #eigenvectors
% m = #eigenspaces
% k = k-graphical design
% G = (V,E) :=  graph (vertex set, edge set)
% AD^-1 Laplacian
% A = adjacency matrix
% D = diagonal degree matrix
% I = identity matrix 
% W subset V  = subset of vertices
% U = orthonormal eigenvector matrix.
% \lambda = eigenvalue
% \Lambda = eigenspace
%\vspace{-.05 in}
\section{Introduction}
Graphical designs extend classical quadrature rules to the domain of graphs. Informally, a quadrature rule is a set of points on a domain which represent that domain well in terms of numerical integration.  That is, the integral of a suitably smooth function over the domain equals a weighted sum of the function values at the quadrature points. A graphical design is a subset of vertices of a graph which approximates the graph in a similar sense; the average of suitable functions over the whole graph agrees with the weighted sum of the function's values on the design. 

\begin{figure}[h]
\centering
\begin{tabular}{cc}
\subfloat[]{\includegraphics[scale = .6]{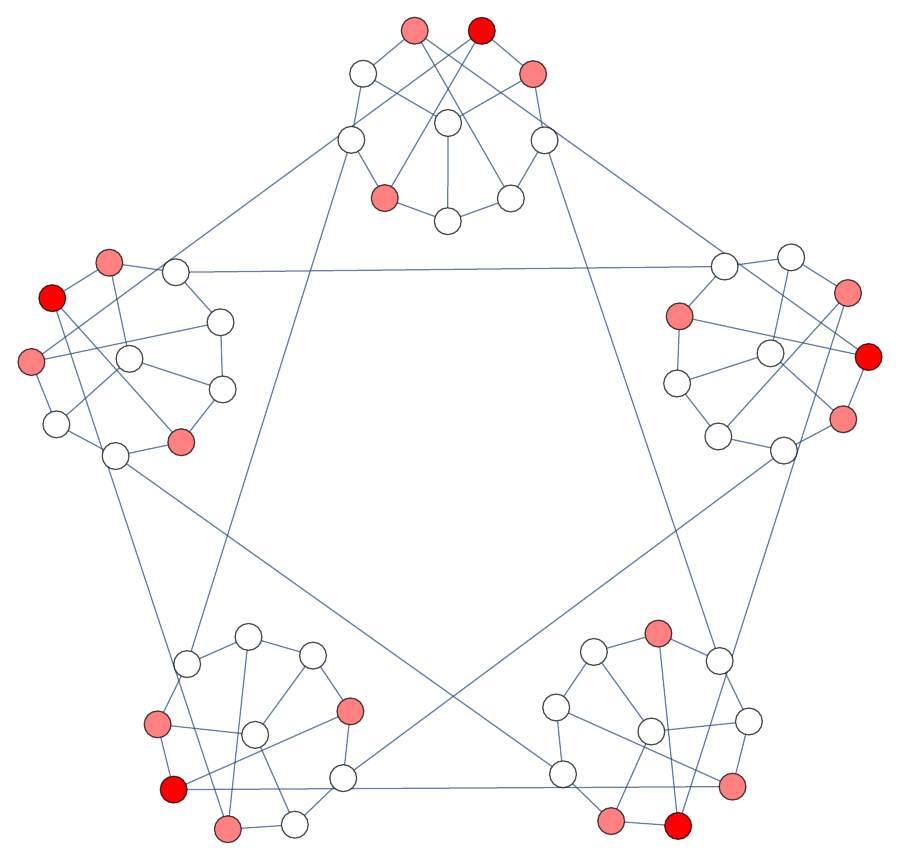}
      } 
        & 
\subfloat[]{\includegraphics[scale = .6]{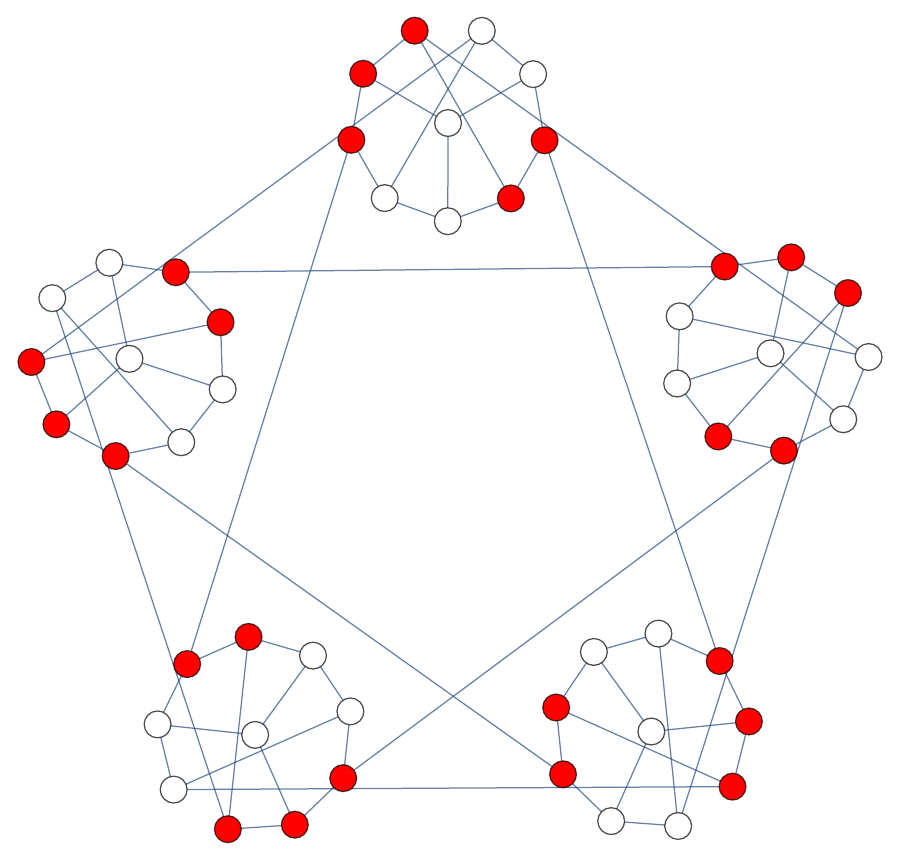}} 
 \end{tabular}
    \caption{The Szekeres Snark has $50$ vertices, $75$ edges and $11$ eigenspaces.
  (A) is a positively weighted $8$-design and (B) is a combinatorial $8$-design. Lighter reds correspond to smaller weights.}
    \label{fig: Szekeres pos and comb}
\end{figure}

In this paper we consider graphical designs in connected regular graphs. 
A function on a graph $G=(V,E)$ is a map $\phi \,:\, V \rightarrow \RR$, which we 
identify with the vector $(\phi(v) \,:\, v \in V) \in \RR^V$. 
The eigenvectors of  the normalized adjacency matrix of a graph $G=(V,E)$ form a basis for all function on $G$. In this paper, we often use the  {\em frequency order} on eigenspaces which is aligned with a notion of ``smoothness'' of functions on $G$.  Given any ordering of the eigenspaces, we define a subset of vertices $W \subseteq V$, with weights $(a_w \,:\, w \in W)$, to be a {\em weighted} $k$-{\em design} in $G$ if for all 
vectors $\phi$ in the first $k$ eigenspaces, $$ \sum_{w \in W} a_w \phi(w) = \frac{1}{|V|} \sum_{v \in V} \phi(v).$$ 
By imposing different requirements on the weights $a_w$, we obtain different types of designs --- weighted ($a_w \in \RR$), positively weighted ($a_w \geq 0$) or combinatorial ($a_w \in \{0,1\}$).  A design is {\em extremal} if it averages all eigenspaces except the last one in the given eigenspace ordering. 
%Precise definitions will be given in Section~\ref{sec:graphical designs}.  
Figure~\ref{fig: Szekeres pos and comb} depicts positively weighted and combinatorial designs in the $3$-regular Szekeres Snark graph that average the first $8$ eigenspaces of the normalized adjacency matrix of this graph in frequency order.

In this paper we show that {\em Gale duality} \cite{GaleOriginal}, from the theory of polytopes, creates a bijection between the positively weighted $k$-designs in a graph 
and the faces of a generalized {\em eigenpolytope} of the graph. 
Eigenpolytopes were defined by Godsil \cite{Godsil_GGP}, and we extend their definition for our purposes.  This connection, and a more general connection to {\em oriented matroid duality}, 
allows one to organize, compute and optimize graphical designs using the combinatorics of the corresponding eigenpolytope. In Figure~\ref{fig:Gale illustration}, we see an illustration of the design-face correspondence for extremal designs on the octohedral graph with the eigenspace for $\l = -1/2$  ordered last. 

\begin{figure}[h]
    \centering
    \includegraphics[scale = .45]{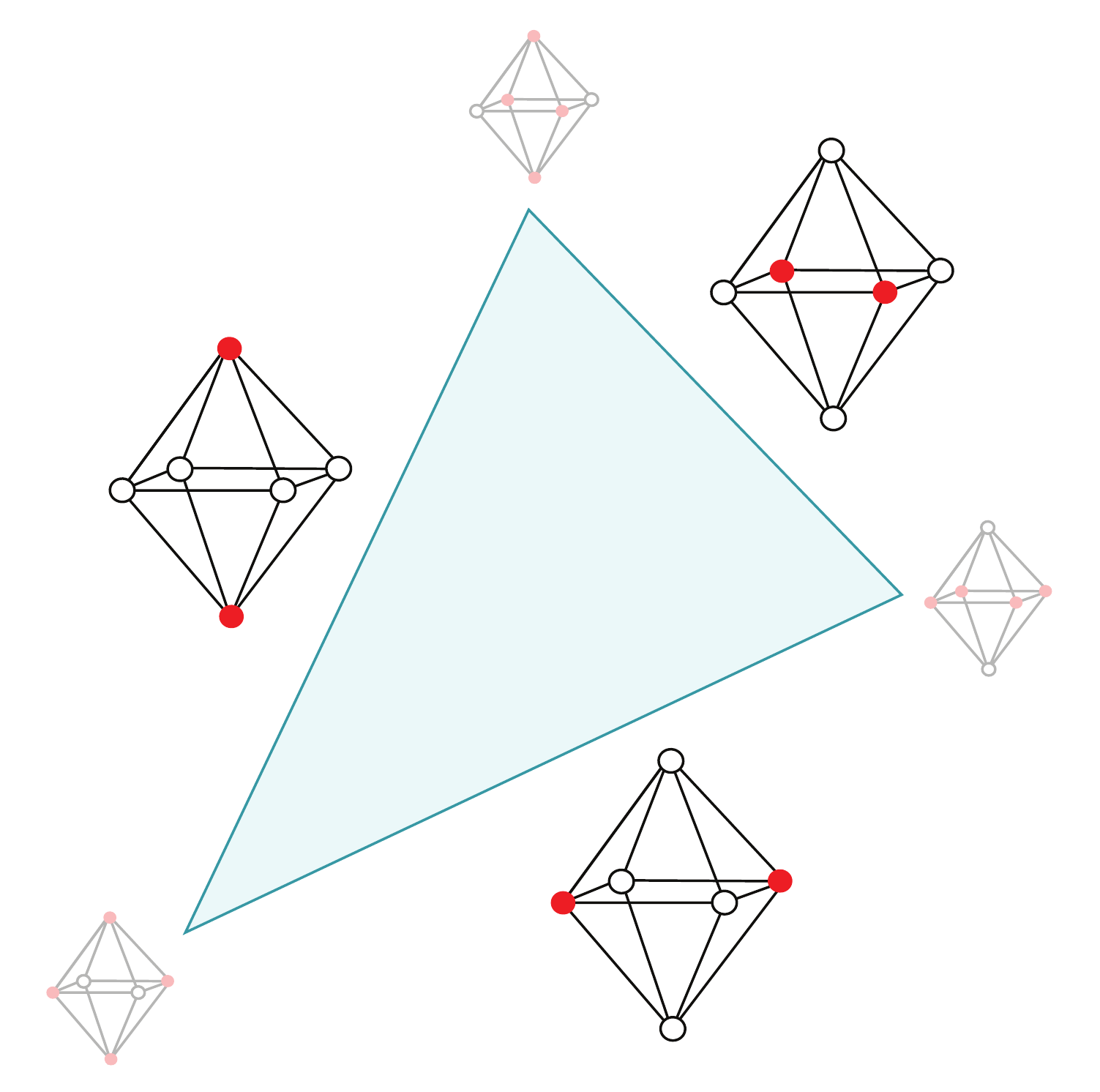}
    \caption{The eigenpolytope for $\l = -1/2$ of the octahedron is a triangle.
    The three facets are in bijection with the three minimal positively weighted designs in the 
    octahedron in the eigenspace ordering that puts $\l = -1/2$ last. The vertices of the triangle 
    correspond to non-minimal designs given by the union of the two adjacent minimal designs.}
    %with each vertex indexed by two graph vertices. 
    %The complements of the facet indices of the triangle give three minimal positively weighted designs. The lower dimensional faces correspond to designs supported on the union of the two adjacent minimal designs.
    \label{fig:Gale illustration}
\end{figure}

We use our main result to compute and/or bound the minimal positively weighted extremal designs in three well-known families of Cayley graphs. For cocktail party graphs, which are the edge graphs of cross-polytopes, we show that every minimal weighted extremal design (in any ordering) is combinatorial, and we describe them explicitly. For cycles, we show that every minimal weighted extremal design in the frequency order is positively weighted, and find their sizes. For the edge graph of a $d$-hypercube and frequency order, we give a precise description of extremal designs for when $d \equiv 2$ mod $4$. For the other congruence classes, we bound the size of a minimal extremal design. The cube results rely on the theory of linear codes. 

Graphical designs were defined relatively recently,  by Steinerberger in \cite{graphdesigns}. 
He was motivated by spherical designs and more generally, designs in manifolds. 
The main result of \cite{graphdesigns} is that if $W$ is a ``good'' graphical design, then either $|W|$ is large, or the $j$-neighborhoods of $W$ grow exponentially. In \cite{SSLinderman}, Steinerberger and Linderman give bounds on the numerical integration error for any quadrature rule on a graph. Golubev in \cite{Golubev} introduced extremal designs and connected them to extremal combinatorics. Babecki in \cite{cubescodes} 
refined the definition of graphical designs to make sense when the eigenvalues of a graph Laplacian have  multiplicity, connected linear codes in the Boolean cube to graphical designs in the edge graphs of hypercubes, and distinguished graphical designs from a handful of related concepts in the adjacent literature. She also hosts a database of examples and code at \url{https://sites.math.washington.edu/~GraphicalDesigns/}.

 Modern data is often best modeled through graphs, driving an increasingly important need for new data processing tools in graphs.  The relatively new field of \textit{graph signal processing} (see, for instance, \cite{ortegaBook, signalprocessingoverview, Pesenson,samplingAGO,samplingTBD,samplingTEOC,samplingBWCNG,SamplingCVSK,SGTgraphwavelets,samplingMSLR,DSPonGraphs}) seeks to extend classical signal processing techniques to the domain of graphs. Graphical designs offer a framework for \textit{graph sampling}, a notoriously difficult problem in applied mathematics. \catherine{A concrete connection between graphical designs and graph sampling was established recently 
 in \cite[Section 3]{RekhaStefanRandomWalks}.}
 Graphical designs also connect to pure mathematics and theoretical computer science through combinatorics, spectral graph theory, error correcting codes, probability, Fourier analysis, and representation theory. 
 
This paper is organized as follows.  Section~\ref{sec:graphical designs} states the formal definitions of designs and illustrates their nuances through examples.  Section~\ref{sec:OM and Eigenpolytopes} introduces the necessary background on Gale duality and proves our main structure theorem connecting graphical designs to the facial structure of eigenpolytopes. We then illustrate the subtleties and power of this result through several further examples.  Section~\ref{sec:OM and Eigenpolytopes} concludes with an overview of the eigenpolytope literature and a rephrasing of the main results of Golubev \cite{Golubev} in terms of eigenpolytopes.
 In Section~\ref{sec:cross-polytopes and cycles}, we 
use Gale duality to classify the minimal positively weighted extremal designs in two families of graphs, the $n$-cycle and cocktail party graphs. Section~\ref{sec:cubes} considers edge graphs of 
$d$-dimensional hypercubes, which we denote by $Q_d$. We describe the eigenpolytopes of $Q_d$, one of which is the cut polytope. 
\catherine{Facets of the cut polytope given by triangle inequalities 
can be used to find the minimum extremal designs in a particular eigenspace ordering of $Q_d$.} 
Under frequency order, we prove upper bounds on the size of a smallest positively weighted extremal design for $Q_d$ using Gale duality and linear codes; these bounds are tight when $d \equiv 2 \mod 4$. 

{\bf Acknowledgments.} We thank Sameer Agarwal and Stefan Steinerberger for many useful discussions and suggestions. We also thank Chris Lee and David Shiroma, undergraduates at the University of Washington, who worked with us in Autumn 2021 on graphical designs. They independently discovered the construction in Bonisoli's theorem on linear codes which provides strong bounds on extremal designs in the graphs of hypercubes. We explain this result in Section~\ref{sec:cubes}.

\section{Graphical Designs: Definitions and Examples}
\label{sec:graphical designs}

Let $G= ([n],E)$ be a connected, simple, undirected graph with vertex set $[n]:= \{1,\ldots,n\}$ and edge set $E$. We will assume throughout that $G$ is 
regular with the degree of every vertex equal to $\d$. The {\em adjacency matrix} $A \in \RR^{n \times n}$ of $G$ is defined by $A_{ij} = 1$ if $ij\in E$ and $A_{ij}=0$ otherwise. Let $D \in \RR^{n \times n}$ be the diagonal matrix with $D_{ii} = \deg(i) =\d$, where
$\deg(i)$ is the degree of vertex $i \in [n]$. Then the spectrum of the {\em normalized adjacency matrix} $ AD^{-1} =(1/\d) A$ is contained in the interval $[-1,1]$, and $1$ is an eigenvalue of $ A D^{-1}$. \catherine{We denote the eigenspace of $1$ by $\L_1$. In general, the dimension of $\L_1$ is the number of connected components of $G$, and in our set up, $\Lambda_1 := \spanset\{\ones\}$, where $\ones$ denotes the all-ones vector.}

Throughout this paper, we will refer to the spectral information of $ A D^{-1}$ as the spectral information of $G$. 
We will use the eigenvalues and eigenspaces of $ A D^{-1}$ to define graphical designs in $G$. 
We note that in \cite{graphdesigns}, graphical designs were defined using the normalized Laplacian matrix $AD^{-1} - I$. Since $AD^{-1}-I$ and $AD^{-1}$ have the same eigenspaces with eigenvalues shifted by $1$, we use the simpler $AD^{-1}$ in this paper. An eigenvector of $AD^{-1}$ will be 
interpreted as a function $ \phi \,:\, V \rightarrow \RR$, with the 
$v$-th coordinate denoted by $\phi(v)$.
We begin by defining what it means for a subset of vertices to {\em average} an eigenspace $\Lambda$ of $G$.

\begin{definition}
Let  $G=([n],E)$ be a graph.
A subset of vertices $W \subseteq [n]$ \emph{averages the eigenspace} $\L$ of $G$ if there are weights $(a_w \in \RR: w \in W)$ such that for every eigenvector $\phi$ in a basis of $\Lambda$,
\begin{equation} \label{eq:average}
\sum_{w \in W}  a_w \phi(w) = \frac{1}{n}\sum_{v \in [n]} \phi(v).
\end{equation} \end{definition}

There are three types of weights of interest in this paper: arbitrary ($a_w \in \RR$), positive ($a_w > 0$), and uniform ($a_w = 1/|W|$). \catherine{  
In classical numerical integration, negative weights are undesirable as they can lead to divergent solutions and numerical instability, see, for instance \cite{HuybrechsInstability}. The main results of this paper are about positively weighted graphical designs.}

If $G$ is regular, then $A D^{-1}$ is symmetric, so we can find a set of $n$ orthogonal eigenvectors that form a basis for $\RR^n$. It will be convenient to not assume that the eigenvectors have unit length, allowing us to use $\ones$ as the eigenvector spanning $\Lambda_1$. The average of $\ones$ over $G$ is \begin{equation} \label{eq: everything averages first eigenspace}
     \frac{1}{n}\sum_{v \in [n]} 1 = 1.
\end{equation}
If $\phi$ is an eigenvector of $ A D^{-1}$ with eigenvalue not equal to $1$, then 
$\ones^\top \phi = 0$ by orthogonality. Hence the average of $\phi$ over $G$ is
\begin{equation} \label{eq:0 average}
 \frac{1}{n}\sum_{v \in [n]} \phi(v) = \frac{1}{n} \ones^\top \phi = 0. 
\end{equation}

Therefore, we may interpret the weights in \eqref{eq:average} 
as a vector $a$ orthogonal to $\phi$ with $a_i = 0$ for all $i \not \in W$ and $a_i = a_w$ for all $i=w \in W$.
Suppose $G$ has $m$ eigenspaces, and fix an ordering with $\L_1 = \spanset \{\ones\}$ ordered first.  
A weighted $k$-graphical design is a subset of vertices that averages the first $k$ eigenspaces in this ordering.

\begin{definition} [$k$-graphical designs]
\label{def:graphical designs}
Suppose $G = ([n],E)$ has $m$  eigenspaces ordered as 
$ \spanset \{\ones\} = \L_1 <\ldots < \L_m.$
\begin{enumerate}
    \item A \emph{weighted $k$-graphical design} of $G$ is a subset $W \subseteq [n]$ and weights $(a_w \in \RR: w\in W)$ such that $W$ averages the eigenspaces $\L_1, \ldots, \L_k$. 
    \item If in addition, $ a_w > 0 $ for all $w\in W$, we call $W$ a \emph{positively weighted $k$-graphical design} of $G$, and 
    \item if $ a_w = 1/|W| $, then we call $W$ a \emph{combinatorial $k$-graphical design} of $G$.
\end{enumerate}

\end{definition}

We often drop the word `graphical' and refer to $k$-designs.
The different types of weights provide a hierarchy of $k$-designs: 
any combinatorial $k$-design is a positively weighted $k$-design, and any positively weighted $k$-design is a weighted $k$-design. In general, the three types of weights provide distinct designs as we will see shortly. 
For later use, it will be convenient to characterize the different types of designs as follows. The {\em support} of a vector $a \in \RR^n$ is $\supp(a) := \{i \in [n] \,:\, a_i \neq 0 \}$. For a subset $W \subset [n]$, define $\ones_W$ by  $\ones_W(i) = 1$ if $i \in W$ and 0 otherwise.

\begin{lemma} \label{lem:computational check}
Suppose $G = ([n],E)$ has $m$ distinct eigenspaces ordered as 
$$ \spanset \{\ones\} = \L_1 <... < \L_m.$$
\begin{enumerate}
    \item $W \subseteq [n]$ is a weighted $k$-design of $G$ if and only if 
    there is a non-zero vector $a \in \RR^n$ such that 
    $$W = \supp(a),\,\,\,\, \phi^\top a = 0 \,\,\,\, \forall \phi \in 
    \Lambda_2, \ldots, \Lambda_k, \,\,\,\, \ones^\top a \neq 0.$$ 
    \item $W \subseteq [n]$ is a positively weighted $k$-design of $G$ if and only if 
    there is a non-zero vector $a \in \RR^n, a \geq 0$ such that 
    $$W = \supp(a) \textup{ and } \phi^\top a = 0 \,\,\,\, \forall \phi \in 
    \Lambda_2, \ldots, \Lambda_k.$$ 
    \item $W \subseteq [n]$ is a combinatorial $k$-design of $G$ if and only if 
      $\phi^\top \ones_W = 0 \,\,\,\, \forall \phi \in 
    \Lambda_2, \ldots, \Lambda_k.$
\end{enumerate}
\end{lemma}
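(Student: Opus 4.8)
The plan is to prove each of the three equivalences directly from the definitions, using the two observations already recorded in the excerpt: that everything averages $\Lambda_1 = \spanset\{\ones\}$ (equation \eqref{eq: everything averages first eigenspace}), and that for any eigenvector $\phi$ with eigenvalue $\neq 1$ the right-hand side of \eqref{eq:average} vanishes (equation \eqref{eq:0 average}). The common thread is to translate ``$W$ averages $\Lambda_2,\ldots,\Lambda_k$'' into the orthogonality conditions $\phi^\top a = 0$ for a suitable weight vector $a \in \RR^n$ supported on $W$, and to observe that averaging $\Lambda_1$ is automatic once $\ones^\top a$ has the right value.

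For part (1), I would start with a weighted $k$-design $W$ with weights $(a_w : w \in W)$ and form the vector $a \in \RR^n$ by setting $a_i = a_w$ if $i = w \in W$ and $a_i = 0$ otherwise, as suggested in the paragraph before Definition~\ref{def:graphical designs}. Then $\supp(a) \subseteq W$; to get equality one notes that a vertex with zero weight can be discarded, so WLOG all weights are nonzero and $\supp(a) = W$. By \eqref{eq:0 average}, averaging $\Lambda_j$ for $2 \le j \le k$ is precisely $\phi^\top a = 0$ for all $\phi$ in a basis of $\Lambda_j$, hence for all $\phi \in \Lambda_j$ by linearity. For the condition $\ones^\top a \neq 0$: averaging $\Lambda_1$ means $\sum_{w} a_w \cdot 1 = \frac1n \sum_v 1 = 1$, so in fact $\ones^\top a = 1 \neq 0$. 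Conversely, given a nonzero $a$ with $W = \supp(a)$, $\phi^\top a = 0$ on $\Lambda_2,\ldots,\Lambda_k$, and $c := \ones^\top a \neq 0$, the rescaled vector $a/c$ has $\ones^\top(a/c) = 1$, which gives averaging of $\Lambda_1$, and still satisfies the orthogonality conditions, so its nonzero entries furnish valid design weights on $W$. Part (2) is the same argument carried out with the sign constraint $a \ge 0$ preserved throughout; the only wrinkle is that rescaling by $c = \ones^\top a$ must not flip signs, which is fine because $a \ge 0$, $a \neq 0$ forces $c > 0$ (every coordinate of $\ones$ is positive), so the ``$\ones^\top a \neq 0$'' clause is automatically satisfied and can be dropped from the statement — exactly as the lemma records. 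Part (3) is the specialization $a = \ones_W$: then $\ones^\top a = |W| \neq 0$ automatically, the uniform weights $1/|W|$ correspond to $a/|W|$, and averaging $\Lambda_2,\ldots,\Lambda_k$ becomes $\phi^\top \ones_W = 0$.

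I do not anticipate a serious obstacle here; this is essentially a bookkeeping lemma unpacking Definition~\ref{def:graphical designs} through equations \eqref{eq: everything averages first eigenspace} and \eqref{eq:0 average}. The one point requiring a little care is the support issue in parts (1) and (2): the definition of a design allows listing vertices with zero weight, whereas the lemma demands $W = \supp(a)$ exactly. I would handle this by remarking once that dropping zero-weight vertices changes neither the averaging property nor the design type, so one may assume all listed weights are nonzero; alternatively one could note that any design on $W$ restricts to a design on $\supp(a) \subseteq W$, and that the lemma is stated for the (inclusion-minimal in weight-support) representative. The other mild subtlety is the direction of the rescaling argument: I should make explicit that multiplying $a$ by the nonzero scalar $1/(\ones^\top a)$ leaves $\phi^\top a = 0$ intact for all $\phi$ orthogonal to $\ones$, and in the positive case that this scalar is positive. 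With those two remarks in place, each equivalence is a two-line verification in each direction.
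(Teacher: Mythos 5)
Your proposal is correct and follows essentially the same route as the paper's proof: translate averaging of $\L_2,\ldots,\L_k$ into the orthogonality conditions $\phi^\top a=0$ via \eqref{eq:0 average}, observe that averaging $\L_1$ amounts to $\ones^\top a=1$ (achievable by rescaling when $\ones^\top a\neq 0$), and note that for nonnegative or $0/1$ weight vectors the condition $\ones^\top a\neq 0$ is automatic. Your extra remark about discarding zero-weight vertices to force $W=\supp(a)$ is a harmless bookkeeping point the paper leaves implicit.
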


\begin{proof}
The proof of this lemma mostly follows from \eqref{eq:0 average}.  The only extra piece is the condition that $\ones^\top a \neq 0$ in (1). This is because $W = \supp(a)$ averages $\Lambda_1 = \spanset\{\ones\}$ if and only if $ \ones^\top a = 1$. If $a$ is a non-zero vector orthogonal to all vectors in $\L_2, \ldots, \L_k$ for which $\ones^\top a \neq 0$, then we can scale it to get $\ones^\top a = 1$ while preserving the orthogonality requirements. This proves (1). The statements in 
(2) and (3) do not need this condition to be stated explicitly since if 
$a \neq 0$ and $a \geq 0$ or $a \in \{0,1\}^n$ then it follows that $\ones^\top a \neq 0$.
\end{proof}

We note a quick fact about combinatorial designs.

\begin{lemma} \label{lem: comb complements}
If $W \subset [n]$ is a combinatorial $k$-design, then so is $[n] \setminus W.$
\end{lemma}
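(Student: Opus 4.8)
The plan is to use the characterization of combinatorial designs given in Lemma~\ref{lem:computational check}(3), namely that $W \subseteq [n]$ is a combinatorial $k$-design if and only if $\phi^\top \ones_W = 0$ for every $\phi \in \Lambda_2, \ldots, \Lambda_k$. So it suffices to check this orthogonality condition for the complement.

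The key observation is the identity $\ones_{[n] \setminus W} = \ones - \ones_W$. Fix any $\phi$ lying in one of $\Lambda_2, \ldots, \Lambda_k$; since this is an eigenspace of $AD^{-1}$ with eigenvalue different from $1$, orthogonality of the eigenbasis gives $\ones^\top \phi = 0$, exactly as recorded in \eqref{eq:0 average}. Therefore
\begin{equation*}
\phi^\top \ones_{[n] \setminus W} = \phi^\top \ones - \phi^\top \ones_W = 0 - 0 = 0,
\end{equation*}
where the first term vanishes by orthogonality to $\ones$ and the second vanishes because $W$ is a combinatorial $k$-design. Applying Lemma~\ref{lem:computational check}(3) in the reverse direction then shows $[n] \setminus W$ is a combinatorial $k$-design.

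There is essentially no obstacle here: the statement is a two-line consequence of the linearity of the orthogonality conditions together with the fact that every eigenspace past the first is orthogonal to $\ones$. The only thing to be slightly careful about is that the argument genuinely needs $\Lambda_1 = \spanset\{\ones\}$ to be the one excluded eigenspace (so that $\ones$ itself need not be averaged by $\ones_{[n]\setminus W}$ in the same normalized sense), which is guaranteed by the standing setup that $\Lambda_1$ is ordered first.
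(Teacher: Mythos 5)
Your proof is correct and follows essentially the same route as the paper: both arguments use the identity $\ones_{[n]\setminus W} = \ones - \ones_W$ together with $\phi \perp \ones$ for $\phi$ in $\Lambda_2,\ldots,\Lambda_k$, combined with the characterization in Lemma~\ref{lem:computational check}(3).
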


\begin{proof}
Let $\phi \in \bigcup_{i=2}^k\L_{i}$. 
If $W$ is a combinatorial $k$-design, $\ones_W \in \{0,1\}^n$ and $\phi^\top \ones_W = 0$. Since 
$\phi \perp \ones,$ 
$   \phi^\top \ones_{[n] \setminus W} =  \phi^\top ( \ones - \ones_{W} ) = 0.$
Hence $[n] \setminus W$ is also a combinatorial $k$-design.
\end{proof}

A natural quest at this point is to find the smallest graphical designs 
that can average as many eigenspaces as possible, given a fixed eigenspace ordering. We first note that no proper subset of $[n]$ can average all eigenspaces of $G$.

\begin{lemma}
In a connected, regular graph $G=(V,E)$, no proper subset $W \subset V$ can average all eigenspaces of $G$ 
in any eigenspace ordering with any type of weights.
\end{lemma}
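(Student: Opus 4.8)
The plan is to exploit the fact that the eigenvectors of $AD^{-1}$ form a basis of $\RR^n$: a subset averaging \emph{all} eigenspaces must in fact reproduce the global average of \emph{every} function on $G$, and this pins the weight vector down to a unique choice whose support is all of $V$.

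Concretely, suppose for contradiction that some proper subset $W \subsetneq V$ averages all $m$ eigenspaces of $G$ with weights $(a_w : w \in W)$ of one of the three types. Extend these weights to a vector $a \in \RR^n$ by setting $a_i = 0$ for $i \notin W$, so that $\supp(a) \subseteq W$ is a proper subset of $V$. By \eqref{eq:0 average}, the averaging condition \eqref{eq:average} applied to an eigenvector $\phi$ with eigenvalue different from $1$ reads $\phi^\top a = 0$; running this over a basis of each of $\Lambda_2, \dots, \Lambda_m$ shows that $a$ is orthogonal to $\Lambda_2 \oplus \cdots \oplus \Lambda_m$. Since $G$ is regular, $AD^{-1}$ is symmetric, so its eigenspaces are mutually orthogonal and together span $\RR^n$; since $G$ is connected, $\Lambda_1 = \spanset\{\ones\}$ is one-dimensional, and therefore $\Lambda_2 \oplus \cdots \oplus \Lambda_m = \ones^\perp$. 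Hence $a \in (\ones^\perp)^\perp = \spanset\{\ones\}$, say $a = c\,\ones$.

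It remains to pin down $c$. Averaging $\Lambda_1$ means \eqref{eq:average} holds for $\phi = \ones$, i.e. $\ones^\top a = \frac{1}{n}\sum_{v} 1 = 1$ (for positive or combinatorial weights this is automatic, cf. \eqref{eq: everything averages first eigenspace}), so $cn = 1$ and $c = 1/n \neq 0$. Then $\supp(a) = V$, contradicting $\supp(a) \subseteq W \subsetneq V$. The argument is insensitive to the weight regime, since positively weighted and combinatorial designs are in particular weighted designs. Alternatively, one could invoke Lemma~\ref{lem:computational check}(1), which already packages the conditions $W = \supp(a)$, $\phi^\top a = 0$ for all $\phi \in \Lambda_2,\dots,\Lambda_m$, and $\ones^\top a \neq 0$, and then conclude as above.

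There is no real obstacle here; the only point that deserves care is that \emph{both} hypotheses are genuinely used — regularity, to obtain an orthogonal eigenbasis of $\RR^n$ so that orthogonality to $\Lambda_2,\dots,\Lambda_m$ really confines $a$ to $\spanset\{\ones\}$, and connectedness, to know that $\Lambda_1$ is exactly $\spanset\{\ones\}$ rather than a higher-dimensional space (in which case $a$ would have more freedom and the conclusion could fail).
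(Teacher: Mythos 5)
Your proof is correct and follows essentially the same route as the paper: the weight vector $a$ must lie in the kernel of a basis matrix for $\Lambda_2 \oplus \cdots \oplus \Lambda_m$, which (by regularity and connectedness) is $\spanset\{\ones\}$, forcing $a = \ones/n$ and hence full support. Your write-up just spells out the normalization via $\ones^\top a = 1$ and the role of each hypothesis a bit more explicitly than the paper does.
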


\begin{proof}
Suppose $W \subset [n]$ averages every eigenspace of $G$. Let $U$ be a matrix whose rows form a 
basis for $\Lambda_2 \oplus \cdots \oplus \Lambda_m$. By \eqref{eq:0 average},  
$W=\supp(a)$ for some $a \in \ker U$, which is 1-dimensional and spanned by $\ones$. Therefore, $a = \ones/n$ and $\supp(W) = V$. 
\end{proof}

This brings us to the next two definitions.

\begin{definition}[Maximal and Extremal Designs]
Suppose $G$ has $m$ eigenspaces with a fixed ordering 
$ \spanset \{\ones\} = \L_1 <... < \L_m$, and let $k_{\max}$ be maximal such that $G$ has a $k_{\max}$-graphical design.  
\begin{enumerate}
    \item A \emph{maximal design} in $G$ is a  $k_{\max}$-graphical design. 
    \item  An  \emph{extremal design} in $G$ is an $(m-1)$-graphical design.
\end{enumerate}
\end{definition}

\catherine{Note that the maximal and extremal designs of a graph depend on the eigenspace ordering chosen.}
We show in Section~\ref{sec:OM and Eigenpolytopes} that every graph has a positively weighted extremal design. However, a graph may have no extremal combinatorial designs. 

\begin{example} \label{ex: icosa no extremal}
Let $G = ([12],E)$ be the edge graph of a regular icosahedron.  We record a basis for each eigenspace $\Lambda_i$ of $G$ (with eigenvalue $\lambda_i$) in 
Figure~\ref{fig: icosahedron eigenbasis}.

\begin{figure}[h!]
\begin{tabular}{c|cccccccccccc}
$\l_1 = 1$ & 1 & 1 & 1 & 1 & 1 & 1 & 1 & 1 & 1 & 1 & 1 & 1\\
\hline
& $\varphi$ &  $- \varphi$ &   $- \varphi$ &  $\varphi$  & $-1 $ &   $-1 $  &    1 &    1 &       0    &      0    &     0     &    0  \\
 $ \l_2 = -.4472$ & $-1 $  &  1 &   $\varphi$  &  $- \varphi$ &    0  & $\varphi$  &  $- \varphi$ &       0   &      0  & $-1 $  &    1 &  0 \\
& $\varphi$  &  $- \varphi$ &   $-1 $ &   1 &        0 & $- \varphi$&   $\varphi$  &         0  & $-1 $ &    0   &      0  &  1 \\
    \hline
 & $\psi$ &  $-\psi$& $-\psi$&  $\psi$ &$-1 $ & $-1 $  & 1 & 1 & 0 & 0 & 0 & 0 \\
 $ \l_3 = .4472 $ & $-1 $ &    1 &  $\psi$ &  $-\psi$ &    0  &  $\psi$ &$-\psi$ &     0   &      0   & $-1 $  &  1&    0 \\
     &    $\psi$ &   $-\psi$ &  $-1 $  &    1 & 0    & $-\psi$ &   $\psi$ &     0 & $-1 $ & 0   &0 &   1 \\
\hline
  &  $-1 $   & $-1 $ &    1   &  1   &  0   &  0    & 0   &  0   &  0  &   0   & 0   &  0\\
  &  $-1 $    & $-1 $   &   0    & 0    & 0  &   1     &1   &  0&     0   &  0    & 0  &   0\\
 $ \l_4 = -.2$ & $-1 $   & $-1 $  &   0  &   0    & 1    & 0    & 0    & 1   &  0   &  0   &  0   &  0\\
 &   $-1 $   &  $-1 $     & 0   &  0  &   0   &  0    & 0  &   0    & 0    & 1   &  1   &  0\\
  &  $-1 $   & $-1 $   &   0  &   0     &0    & 0   &  0     &0  &   1    & 0     &0  &   1 
\end{tabular}
    \caption{The rows of this matrix form an eigenbasis of the icosahedral graph. The horizontal lines divide the eigenspaces for the eigenvalues $\lambda_1,\l_2, \l_3, \lambda_4$. Here $ \varphi =  (1+\sqrt{5})/2$ and  $ \psi =  (1-\sqrt{5})/2$.  } \label{fig: icosahedron eigenbasis}
\end{figure}
Suppose we order the eigenspaces of $G$ as $\L_1 < \L_4 < \L_3 < \L_2$. Then, $G$ has no extremal combinatorial designs; Figure \ref{fig: icosahedron 2-designs reverse freq} shows the minimum cardinality positively weighted 2-designs for this ordering, which are also combinatorial. There are 12 minimum cardinality arbitrarily weighted $3$-designs, each consisting of $7$ vertices. %For the  design shown, the weight vector is $( 0.5721,0.3535,0.3535,0.3535,0.3535,0.3535,0 , 0,0,0,0, -0.2185)$  The largest positive value is the center of the cluster, and the negative value is the antipodal point. 
A minimum cardinality positively weighted $3$-design consists of 9 vertices, see Figures \ref{fig: icosa extremal}A,B. These computations were done in Matlab \cite{MATLAB:2020}.  % For the  design shown, the weight vector is $(0.2887,0.1784,0.1784,0.2887,0.4671,0.1784,0.2887,0.4671,0.4671)  The largest positive values are at the center of the cluster, and it fades outward.
 
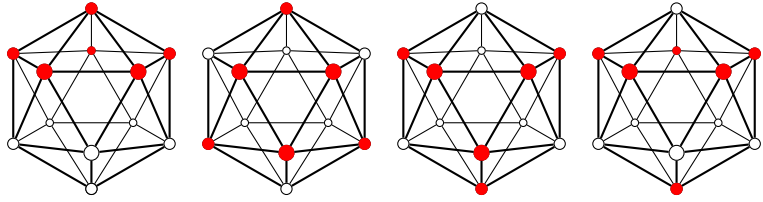
\begin{figure}[h!]
    \begin{tabular}{c c c c } \begin{tikzpicture}[ scale = .4]
  \tikzstyle{bk}=[circle,draw =black, fill=white ,inner sep=1.5pt]
      \tikzstyle{red}=[circle,draw =red, fill=red ,inner sep=1.5pt]
      \tikzstyle{pink}=[circle,draw =red, fill=red ,inner sep=1.5pt]
\tikzstyle{frontred}=[circle,draw =red, fill=red  ,inner sep=2pt]
    \tikzstyle{front}=[circle,draw =black, fill=white ,inner sep=2pt]
        \tikzstyle{backred}=[circle,draw =red, fill=red ,inner sep=1 pt]
  \tikzstyle{grey}=[fill,circle, draw=black,fill=white, inner sep=1 pt]
    \tikzstyle{blue}=[fill,circle, draw=blue!60,fill=blue!60, inner sep=1.5 pt]
    \foreach \y[count=\a] in {10,9,4}
      {\pgfmathtruncatemacro{\kn}{120*\a-90}
       \node at (\kn:3) (b\a) [bk] {} ;}
    \foreach \y[count=\a] in {8,7,2}
      {\pgfmathtruncatemacro{\kn}{120*\a-90}
       \node at (\kn:1.8) (d\a) [front] {};}
    \foreach \y[count=\a] in {1,5,6}
      {\pgfmathtruncatemacro{\jn}{120*\a-30}
       \node at (\jn:1.6) (a\a) [grey]{};}
    \foreach \y[count=\a] in {3,11,12}
      {\pgfmathtruncatemacro{\jn}{120*\a-30}
       \node at (\jn:3) (c\a) [bk] {};}
\node at (120*1 -90:3) (n1) [pink] {} ;
\node at (120*2 -90:3) (n2) [pink] {} ;
\node at (120*1-90:1.8) (n3) [frontred] {} ;
\node at (120*2 -90:1.8) (n4) [frontred] {} ;
\node at (90:1.6) (n5) [backred] {} ;
\node at (90:3) (n6) [red] {} ;
\node at (-90:3) (n7) [bk] {} ;
  \draw[grey] (a1)--(a2)--(a3)--(a1);
  \draw[thick] (d1)--(d2)--(d3)--(d1);
  \foreach \a in {1,2,3}
   {\draw[grey] (a\a)--(c\a);
   \draw[thick] (d\a)--(b\a);}
   \draw[thick] (c1)--(b1)--(c3)--(b3)--(c2)--(b2)--(c1);
   \draw[thick] (c1)--(d1)--(c3)--(d3)--(c2)--(d2)--(c1);
   \draw[grey] (b1)--(a1)--(b2)--(a2)--(b3)--(a3)--(b1);
\end{tikzpicture} 
&
    \begin{tikzpicture}[scale = .4]
  \tikzstyle{bk}=[circle,draw =black, fill=white ,inner sep=1.5pt]
      \tikzstyle{red}=[circle,draw =red, fill=red ,inner sep=1.5pt]
      \tikzstyle{pink}=[circle,draw =red, fill=red ,inner sep=1.5pt]
\tikzstyle{frontred}=[circle,draw =red, fill=red  ,inner sep=2pt]
    \tikzstyle{front}=[circle,draw =black, fill=white ,inner sep=2pt]
        \tikzstyle{backred}=[circle,draw =red, fill=red ,inner sep=1 pt]
  \tikzstyle{grey}=[fill,circle, draw=black,fill=white, inner sep=1 pt]
    \tikzstyle{blue}=[fill,circle, draw=blue!60,fill=blue!60, inner sep=1.5 pt]
    \foreach \y[count=\a] in {10,9,4}
      {\pgfmathtruncatemacro{\kn}{120*\a-90}
       \node at (\kn:3) (b\a) [bk] {} ;}
    \foreach \y[count=\a] in {8,7,2}
      {\pgfmathtruncatemacro{\kn}{120*\a-90}
       \node at (\kn:1.8) (d\a) [front] {};}
    \foreach \y[count=\a] in {1,5,6}
      {\pgfmathtruncatemacro{\jn}{120*\a-30}
       \node at (\jn:1.6) (a\a) [grey]{};}
    \foreach \y[count=\a] in {3,11,12}
      {\pgfmathtruncatemacro{\jn}{120*\a-30}
       \node at (\jn:3) (c\a) [bk] {};}
\node at (210:3) (n1) [pink] {} ;
\node at (330:3) (n2) [pink] {} ;
\node at (120*1-90:1.8) (n3) [frontred] {} ;
\node at (120*2 -90:1.8) (n4) [frontred] {} ;
\node at (90:3) (n6) [red] {} ;
\node at (-90:1.8) (n7) [frontred] {} ;
  \draw[grey] (a1)--(a2)--(a3)--(a1);
  \draw[thick] (d1)--(d2)--(d3)--(d1);
  \foreach \a in {1,2,3}
   {\draw[grey] (a\a)--(c\a);
   \draw[thick] (d\a)--(b\a);}
   \draw[thick] (c1)--(b1)--(c3)--(b3)--(c2)--(b2)--(c1);
   \draw[thick] (c1)--(d1)--(c3)--(d3)--(c2)--(d2)--(c1);
   \draw[grey] (b1)--(a1)--(b2)--(a2)--(b3)--(a3)--(b1);
\end{tikzpicture} 
&
    \begin{tikzpicture}[scale = .4]
  \tikzstyle{bk}=[circle,draw =black, fill=white ,inner sep=1.5pt]
      \tikzstyle{red}=[circle,draw =red, fill=red ,inner sep=1.5pt]
      \tikzstyle{pink}=[circle,draw =red, fill=red ,inner sep=1.5pt]
\tikzstyle{frontred}=[circle,draw =red, fill=red  ,inner sep=2pt]
    \tikzstyle{front}=[circle,draw =black, fill=white ,inner sep=2pt]
        \tikzstyle{backred}=[circle,draw =red, fill=red ,inner sep=1 pt]
  \tikzstyle{grey}=[fill,circle, draw=black,fill=white, inner sep=1 pt]
    \tikzstyle{blue}=[fill,circle, draw=blue!60,fill=blue!60, inner sep=1.5 pt]
    \foreach \y[count=\a] in {10,9,4}
      {\pgfmathtruncatemacro{\kn}{120*\a-90}
       \node at (\kn:3) (b\a) [bk] {} ;}
    \foreach \y[count=\a] in {8,7,2}
      {\pgfmathtruncatemacro{\kn}{120*\a-90}
       \node at (\kn:1.8) (d\a) [front] {};}
    \foreach \y[count=\a] in {1,5,6}
      {\pgfmathtruncatemacro{\jn}{120*\a-30}
       \node at (\jn:1.6) (a\a) [grey]{};}
    \foreach \y[count=\a] in {3,11,12}
      {\pgfmathtruncatemacro{\jn}{120*\a-30}
       \node at (\jn:3) (c\a) [bk] {};}
\node at (30:3) (n1) [pink] {} ;
\node at (270:3) (n2) [pink] {} ;
\node at (120*1-90:1.8) (n3) [frontred] {} ;
\node at (120*2 -90:1.8) (n4) [frontred] {} ;
\node at (150:3) (n6) [red] {} ;
\node at (-90:1.8) (n7) [frontred] {} ;
  \draw[grey] (a1)--(a2)--(a3)--(a1);
  \draw[thick] (d1)--(d2)--(d3)--(d1);
  \foreach \a in {1,2,3}
   {\draw[grey] (a\a)--(c\a);
   \draw[thick] (d\a)--(b\a);}
   \draw[thick] (c1)--(b1)--(c3)--(b3)--(c2)--(b2)--(c1);
   \draw[thick] (c1)--(d1)--(c3)--(d3)--(c2)--(d2)--(c1);
   \draw[grey] (b1)--(a1)--(b2)--(a2)--(b3)--(a3)--(b1);
\end{tikzpicture} 
&
  \begin{tikzpicture}[ scale = .4]
  \tikzstyle{bk}=[circle,draw =black, fill=white ,inner sep=1.5pt]
      \tikzstyle{red}=[circle,draw =red, fill=red ,inner sep=1.5pt]
      \tikzstyle{pink}=[circle,draw =red, fill=red ,inner sep=1.5pt]
\tikzstyle{frontred}=[circle,draw =red, fill=red  ,inner sep=2pt]
    \tikzstyle{front}=[circle,draw =black, fill=white ,inner sep=2pt]
        \tikzstyle{backred}=[circle,draw =red, fill=red ,inner sep=1 pt]
  \tikzstyle{grey}=[fill,circle, draw=black,fill=white, inner sep=1 pt]
    \tikzstyle{blue}=[fill,circle, draw=blue!60,fill=blue!60, inner sep=1.5 pt]
    \foreach \y[count=\a] in {10,9,4}
      {\pgfmathtruncatemacro{\kn}{120*\a-90}
       \node at (\kn:3) (b\a) [bk] {} ;}
    \foreach \y[count=\a] in {8,7,2}
      {\pgfmathtruncatemacro{\kn}{120*\a-90}
       \node at (\kn:1.8) (d\a) [front] {};}
    \foreach \y[count=\a] in {1,5,6}
      {\pgfmathtruncatemacro{\jn}{120*\a-30}
       \node at (\jn:1.6) (a\a) [grey]{};}
    \foreach \y[count=\a] in {3,11,12}
      {\pgfmathtruncatemacro{\jn}{120*\a-30}
       \node at (\jn:3) (c\a) [bk] {};}
\node at (120*1 -90:3) (n1) [pink] {} ;
\node at (120*2 -90:3) (n2) [pink] {} ;
\node at (120*1-90:1.8) (n3) [frontred] {} ;
\node at (120*2 -90:1.8) (n4) [frontred] {} ;
\node at (90:1.6) (n5) [backred] {} ;
\node at (-90:3) (n7) [red] {} ;
  \draw[grey] (a1)--(a2)--(a3)--(a1);
  \draw[thick] (d1)--(d2)--(d3)--(d1);
  \foreach \a in {1,2,3}
   {\draw[grey] (a\a)--(c\a);
   \draw[thick] (d\a)--(b\a);}
   \draw[thick] (c1)--(b1)--(c3)--(b3)--(c2)--(b2)--(c1);
   \draw[thick] (c1)--(d1)--(c3)--(d3)--(c2)--(d2)--(c1);
   \draw[grey] (b1)--(a1)--(b2)--(a2)--(b3)--(a3)--(b1);
\end{tikzpicture} 
\end{tabular}
    \caption{The isomorphism classes of the minimum cardinality positively weighted 2-design with eigenspace ordering $\L_1 < \L_4 < \L_3 <\L_2.$ These are also combinatorial designs.}
    \label{fig: icosahedron 2-designs reverse freq}
\end{figure}

However, in the ordering $\L_1 < \L_2 < \L_3 \leq \L_4$, a minimum cardinality 3-design is combinatorial and consists of only 2 vertices. Every 2-vertex 3-design in this ordering consists of a pair of antipodal points on the icosahedron (see Figure \ref{fig: icosa extremal}C). 
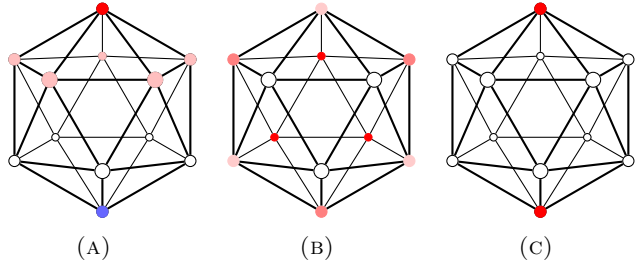
\begin{figure}[h!]

\centering
\begin{tabular}{ccc}
\subfloat[]{ 
        \begin{tikzpicture}[scale = .45]
  \tikzstyle{bk}=[circle,draw =black, fill=white ,inner sep=1.5pt]
      \tikzstyle{red}=[circle,draw =red, fill=red ,inner sep=1.5pt]
      \tikzstyle{pink}=[circle,draw =pink, fill=pink,inner sep=1.5pt]
\tikzstyle{frontred}=[circle,draw =pink, fill=pink ,inner sep=2pt]
    \tikzstyle{front}=[circle,draw =black, fill=white ,inner sep=2pt]
        \tikzstyle{backred}=[circle,draw =pink, fill=pink,inner sep=1 pt]
  \tikzstyle{grey}=[fill,circle, draw=black,fill=white, inner sep=1 pt]
    \tikzstyle{blue}=[fill,circle, draw=blue!60,fill=blue!60, inner sep=1.5 pt]
    \foreach \y[count=\a] in {10,9,4}
      {\pgfmathtruncatemacro{\kn}{120*\a-90}
       \node at (\kn:3) (b\a) [bk] {} ;}
    \foreach \y[count=\a] in {8,7,2}
      {\pgfmathtruncatemacro{\kn}{120*\a-90}
       \node at (\kn:1.8) (d\a) [front] {};}
    \foreach \y[count=\a] in {1,5,6}
      {\pgfmathtruncatemacro{\jn}{120*\a-30}
       \node at (\jn:1.6) (a\a) [grey]{};}
    \foreach \y[count=\a] in {3,11,12}
      {\pgfmathtruncatemacro{\jn}{120*\a-30}
       \node at (\jn:3) (c\a) [bk] {};}
\node at (120*1 -90:3) (n1) [pink] {} ;
\node at (120*2 -90:3) (n2) [pink] {} ;
\node at (120*1-90:1.8) (n3) [frontred] {} ;
\node at (120*2 -90:1.8) (n4) [frontred] {} ;
\node at (90:1.6) (n5) [backred] {} ;
\node at (90:3) (n6) [red] {} ;
\node at (-90:3) (n7) [blue] {} ;
  \draw[grey] (a1)--(a2)--(a3)--(a1);
  \draw[thick] (d1)--(d2)--(d3)--(d1);
  \foreach \a in {1,2,3}
   {\draw[grey] (a\a)--(c\a);
   \draw[thick] (d\a)--(b\a);}
   \draw[thick] (c1)--(b1)--(c3)--(b3)--(c2)--(b2)--(c1);
   \draw[thick] (c1)--(d1)--(c3)--(d3)--(c2)--(d2)--(c1);
   \draw[grey] (b1)--(a1)--(b2)--(a2)--(b3)--(a3)--(b1);
        \end{tikzpicture} } 
        & 
\subfloat[]{\begin{tikzpicture}[scale = .45]
    \tikzstyle{front}=[circle,draw =black, fill=white ,inner sep=2pt]
    \tikzstyle{strongest}=[circle,draw =red, fill=red ,inner sep=1pt]
    \tikzstyle{medium}=[circle,draw =red!50, fill=red!50 ,inner sep=1.5pt]
    \tikzstyle{weakest}=[circle,draw =red!20, fill=red!20 ,inner sep=1.5pt]
      \foreach \y[count=\a] in {10,9,4}
      {\pgfmathtruncatemacro{\kn}{120*\a-90}
       \node at (\kn:3) (b\a) [medium] {} ;}
    \foreach \y[count=\a] in {8,7,2}
      {\pgfmathtruncatemacro{\kn}{120*\a-90}
       \node at (\kn:1.8) (d\a) [front] {};}
    \foreach \y[count=\a] in {1,5,6}
      {\pgfmathtruncatemacro{\jn}{120*\a-30}
       \node at (\jn:1.6) (a\a) [strongest]{};}
    \foreach \y[count=\a] in {3,11,12}
      {\pgfmathtruncatemacro{\jn}{120*\a-30}
       \node at (\jn:3) (c\a) [weakest] {};}
  \draw (a1)--(a2)--(a3)--(a1);
  \draw[thick] (d1)--(d2)--(d3)--(d1);
  \foreach \a in {1,2,3}
   {\draw(a\a)--(c\a);
   \draw[thick] (d\a)--(b\a);}
   \draw[thick] (c1)--(b1)--(c3)--(b3)--(c2)--(b2)--(c1);
   \draw[thick] (c1)--(d1)--(c3)--(d3)--(c2)--(d2)--(c1);
   \draw (b1)--(a1)--(b2)--(a2)--(b3)--(a3)--(b1);
  \end{tikzpicture}} 
  &
  \subfloat[]{\begin{tikzpicture}[ scale = .45]
  \tikzstyle{bk}=[circle,draw =black, fill=white ,inner sep=1.5pt]
      \tikzstyle{red}=[circle,draw =red, fill=red ,inner sep=1.5pt]
    \tikzstyle{front}=[circle,draw =black, fill=white ,inner sep=2pt]
  \tikzstyle{grey}=[fill,circle, draw=black,fill=white, inner sep=1 pt]
    \foreach \y[count=\a] in {10,9,4}
      {\pgfmathtruncatemacro{\kn}{120*\a-90}
       \node at (\kn:3) (b\a) [bk] {} ;}
    \foreach \y[count=\a] in {8,7,2}
      {\pgfmathtruncatemacro{\kn}{120*\a-90}
       \node at (\kn:1.8) (d\a) [front] {};}
    \foreach \y[count=\a] in {1,5,6}
      {\pgfmathtruncatemacro{\jn}{120*\a-30}
       \node at (\jn:1.6) (a\a) [grey]{};}
    \foreach \y[count=\a] in {3,11,12}
      {\pgfmathtruncatemacro{\jn}{120*\a-30}
       \node at (\jn:3) (c\a) [bk] {};}
\node at (90:3) (n6) [red] {} ;
\node at (-90:3) (n7) [red] {} ;
  \draw[grey] (a1)--(a2)--(a3)--(a1);
  \draw[thick] (d1)--(d2)--(d3)--(d1);
  \foreach \a in {1,2,3}
   {\draw[grey] (a\a)--(c\a);
   \draw[thick] (d\a)--(b\a);}
   \draw[thick] (c1)--(b1)--(c3)--(b3)--(c2)--(b2)--(c1);
   \draw[thick] (c1)--(d1)--(c3)--(d3)--(c2)--(d2)--(c1);
   \draw[grey] (b1)--(a1)--(b2)--(a2)--(b3)--(a3)--(b1);
\end{tikzpicture} } 
 \end{tabular}
\caption{Three extremal designs of the icosahedral graph. Figures A and B show the isomorphism classes of the minimum weighted and positively weighted designs respectively for $\L_1 < \L_4 < \L_3 < \L_2$. Figure C shows the isomorphism class of the minimum weighted design for $\L_1 < \L_2 < \L_3 \leq \L_4$, which is combinatorial. Lighter colors correspond to weights of smaller magnitude, red indicates positive and blue is negative.}
\label{fig: icosa extremal}

\end{figure}
 %For the arbitrary weighted design shown, the weight vector is $( 0.5721,0.3535,0.3535,0.3535,0.3535,0.3535,0 , 0,0,0,0, -0.2185)$  
% For the positive weighted design shown, the weight vector is $(0.2887,0.1784,0.1784,0.2887,0.4671,0.1784,0.2887,0.4671,0.4671)  
\qed
\end{example}

Example~\ref{ex: icosa no extremal} brings up the question of eigenspace ordering. Different orderings on the eigenspaces of $G$ produce different graphical designs. A physically motivated ordering on eigenspaces is the {\em frequency ordering}, first introduced in \cite{graphdesigns}. 
The distinct eigenvalues $\l_1, \ldots, \l_m$ of $G$ are ordered by decreasing absolute value; i.e., $$1 = |\l_1| \geq |\l_2| \geq \ldots \geq |\l_n| \geq 0.$$ This induces an ordering on the eigenspaces $\L_1, \ldots, \L_m$ of $G$, though there may be ties.  In particular, the spectrum of every bipartite graph is symmetric about 0, which leads to some ambiguity. We denote $\L_i < \L_j$ when $|\l_i| > |\l_j|$, and  $\L_i \leq \L_j$ when we have chosen to break a tie by ordering $\L_i$ before $\L_j$. In Figure~\ref{fig: icosahedron eigenbasis}, the
eigenspaces are labeled by frequency. 

The frequency ordering comes from an analogy to spherical harmonics. The matrix $AD^{-1}$ is a Laplacian-type operator, which for regular graphs is in many senses analogous to the spherical Laplacian on $\SS^{n-1}$ (see \cite{HAL,BIK,singer}, for instance).  The frequency ordering on the eigenspaces of $AD^{-1}$ captures a similar notion of smoothness and symmetry that low degree spherical harmonics capture for the sphere. In this sense, graphical designs with the frequency ordering on eigenspaces extend spherical quadrature rules to the discrete domain of graphs.

% $p = new Polytope(POINTS=>[[1,-.6,-1,-.6],[1,.6,1,.6],[1,.6,-.6,-1],[1,-.6,.6,1],[1,-1,0,0],[1,-1,.6,-.6],[1,1,-.6,.6],[1,1,0,0],[1,0,0,-1],[1,0,-1,0],[1,0,1,0],[1,0,0,1]]);              has 12 vertices - is icosahedron by godsil                

% $p = new Polytope(POINTS=>[[1,1.6,-1,1.6],[1,-1.6,1,-1.6],[1,-1.6,1.6,-1],[1,1.6,-1.6,1],[1,-1,0,0],[1,-1,-1.6,1.6],[1,1,1.6,-1.6],[1,1,0,0],[1,0,0,-1],[1,0,-1,0],[1,0,1,0],[1,0,0,1]]);             
%has 10 vertices  :      (1.6,-1,1.6), (-1.6,1,-1.6 ), (-1.6,1.6,-1 ), (1.6,-1.6,1),  (-1,-1.6,1.6), (1,1.6,-1.6   ), ( 0 0 -1),(0 -1 0),(0 1 0),(0 0 1)      ? Maybe this is an error due to rounding

\section{Oriented Matroids and Eigenpolytopes} \label{sec:OM and Eigenpolytopes}

In this section we establish our main structure theorem which 
shows that there is a bijection between positively weighted $k$-designs in a 
graph and the faces of a generalized {\em eigenpolytope} of the graph. This result bestows
a great deal of combinatorial structure on $k$-designs, allowing polyhedral methods to find, organize, and optimize them. The theorem is derived via {\em Gale duality} 
from the theory of polytopes which lives under the bigger umbrella of oriented matroid
duality of vector configurations. We begin with some background.

\subsection{Oriented matroid duality of vector configurations}  \label{subsection: OM}
We introduce vector configurations and their oriented matroid duality tailored to our needs, along the lines of \cite[Chapter 6]{Ziegler}. Suppose $\mathcal{U} = \{u_1, \ldots, u_n\} \subset \RR^{n-(d+1)}$ is a collection of vectors such that the matrix $U= \begin{bmatrix} u_1 & u_2 & \cdots & u_n \end{bmatrix} \in \RR^{(n-(d+1)) \times n}$ has rank $n-(d+1)$. The {\em dual configuration} to $\mathcal{U}$ is the collection $\mathcal{U}^\ast = \{u_1^\ast, \ldots, u_n^\ast\} \subset \RR^{d+1}$ such that 
the rows of the matrix $$U^\ast = \begin{bmatrix} u_1^\ast & u_2^\ast & \cdots & u_n^\ast 
\end{bmatrix} \in \RR^{(d+1) \times n}$$ 
form a basis for the nullspace of $U$. Equivalently, $U(U^\ast)^\top = 0$ and 
$\textup{rank}(U^\ast)=d+1$. 
Assume further that $\ones$ is the first row of $U^\ast$. 
Then the following hold: 
%(i) $\mathcal{U}^\ast$ is an {\em acyclic} configuration lying entirely on one side of the hyperplane $x_0 = 0$ in $\RR^{d+1}$, 
(i) the convex hull of $\mathcal{U}^\ast$, denoted as $\conv(\mathcal{U}^\ast)$, is a polytope of dimension $d$ in $\RR^{d+1}$ lying in the hyperplane $x_1 = 1$, 
%(iii) the vector configuration $\mathcal{U}$ is {\em cyclic} in the sense that it positively spans $\RR^{n-d-1}$, 
and (ii) $\mathcal{U}$ satisfies the {\em positive dependence} relation $\sum u_i = 0$. 
A {\em dependence} on a set of vectors is a linear combination of the vectors
that equals $0$, and the dependence is {\em positive} if all coefficients in the combination are non-negative.

\begin{definition}
\begin{enumerate}
    \item A vector $c \in \RR^n$ is a {\em circuit} of the configuration $\mathcal{U}$ (or the matrix $U$) if $c \neq 0$, $Uc = 0$ and $\supp(c)$ is inclusion minimal with these properties. 
    A circuit $c$ of $\mathcal{U}$ is {\em positive} if $c \geq 0$. 
    \item  The {\em cocircuits} of $\mathcal{U}$ (or the matrix $U$) 
    are the non-zero vectors $v^\top U$ of minimal support. A cocircuit $v^\top U$ of 
    $\mathcal{U}$ is {\em positive} if $v^\top U \geq 0$. 
\end{enumerate}    
\end{definition}

The circuits of $\mathcal{U}$ are the minimal non-zero dependences of $\mathcal{U}$, or equivalently, the minimally sparse non-zero vectors in the nullspace of $U$. Similarly, the cocircuits of $\mathcal{U}$ are the minimally sparse non-zero vectors in the row space of $U$. Since the nullspace of $U$ is the row space of $U^\ast$, the circuits of $\mathcal{U}$ are precisely the cocircuits of $\mathcal{U}^\ast$ and vice-versa. 
In this sense the configurations $\mathcal{U}$ and $\mathcal{U^\ast}$ are dual. 

We refer to \cite{GrunbaumBook, Ziegler} for the basics of polyhedral theory. Here are 
the basic definitions that we will need. A {\em polytope} $P$ in $\RR^\ell$ is the convex hull of a finite set of points in $\RR^\ell$, and its dimension is the dimension of the affine span of these points. 
A (proper) {\em face} of a full-dimensional polytope $P \subset \RR^\ell$ is the intersection $P \cap \mathcal{H} \neq \emptyset$, where $\mathcal{H} \subset \RR^\ell$ is a hyperplane that contains $P$ in one of its 
closed halfspaces. A {\em facet} of $P$ is a face of $P$ of dimension $\dim(P)-1$, and a {\em vertex} of $P$ is a face of dimension $0$. The proper faces of $P$ along with $P$ and the empty set 
are all the faces of $P$, and all faces of a polytope are again polytopes. An $\ell$-dimensional polytope has at least $\ell+1$ vertices. 

There is a remarkable bijection between the positive dependences of $\mathcal{U}$ and the faces of the polytope $\conv(\mathcal{U}^\ast)$ as stated below, see \cite[p. 88]{GrunbaumBook}.

\begin{theorem}[Gale Duality] \label{thm:Gale duality}
For any $I \subseteq  [n]$,  $ \conv \{u_i^\ast: i \in [n] \setminus I\}$ is a face of $\conv(\mathcal{U}^\ast)$ if and only if $0$ is in the relative interior of $\conv\{u_i: i \in I\}$.
\end{theorem}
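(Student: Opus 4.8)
The plan is to route both implications through a single algebraic observation. By definition the rows of $U^\ast$ span $\ker U$, so the row space of $U^\ast$ \emph{equals} $\ker U$, and moreover $\ones$ lies in this common subspace because it is the first row of $U^\ast$. Consequently any vector $y \in \RR^n$ of the form $y = c\ones - w^\top U^\ast$ with $c \in \RR$, $w \in \RR^{d+1}$, automatically satisfies $Uy = 0$, equivalently $\sum_i y_i u_i = 0$. Read coordinatewise, $y_i = c - \langle w, u_i^\ast\rangle$, so such a $y$ records the slacks of the affine functional $x \mapsto c - \langle w, x\rangle$ at the points of $\mathcal U^\ast$. Writing $P := \conv(\mathcal U^\ast)$ and, for fixed $I \subseteq [n]$, $F := \conv\{u_i^\ast : i \in [n]\setminus I\}$, the theorem then says: $F$ is a face of $P$ iff there is a choice of $c,w$ making all these slacks non-negative with the zero slacks occurring exactly off $I$, and that this in turn is exactly a non-negative dependence on $\{u_i : i \in I\}$ with full support $I$, i.e. $0 \in \relint\conv\{u_i : i \in I\}$.

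For the forward direction, suppose $F$ is a proper face of $P$. Since faces of polytopes are exposed, there is a supporting hyperplane $\mathcal H = \{x : \langle w, x\rangle = c\}$ with $\langle w, \cdot\rangle \le c$ on $P$ and $F = P \cap \mathcal H$; a standard argument gives $P \cap \mathcal H = \conv\{u_i^\ast : \langle w, u_i^\ast\rangle = c\}$, so the slack vector $y := c\ones - w^\top U^\ast \ge 0$ has $\supp(y) = I$. By the observation above $\sum_{i\in I} y_i u_i = 0$, and normalizing $y$ to a probability vector realizes $0$ as a convex combination of $\{u_i : i \in I\}$ with all coefficients strictly positive, hence $0 \in \relint\conv\{u_i : i \in I\}$. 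The trivial faces are handled directly: $F = \emptyset$ corresponds to $I = [n]$ together with the positive dependence $\sum_i u_i = 0$ noted above, and $F = P$ corresponds (in the tight labeling) to $I = \emptyset$ and the empty dependence.

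For the converse, suppose $0 \in \relint\conv\{u_i : i \in I\}$. Using that the relative interior of the convex hull of a finite point set is precisely its set of strictly positive convex combinations, choose $y \ge 0$ with $\supp(y) = I$, $\sum_i y_i = 1$, and $\sum_{i\in I} y_i u_i = 0$, i.e. $Uy = 0$. Then $y \in \ker U$, which equals the row space of $U^\ast$, so $y = w^\top U^\ast$ for some $w \in \RR^{d+1}$, i.e. $y_i = \langle w, u_i^\ast\rangle$. Hence $\langle w, \cdot\rangle \ge 0$ on $\mathcal U^\ast$ and therefore on $P$, so $\mathcal H := \{x : \langle w, x\rangle = 0\}$ supports $P$, and $P \cap \mathcal H = \conv\{u_i^\ast : y_i = 0\} = \conv\{u_i^\ast : i \in [n]\setminus I\} = F$; thus $F$ is a face.

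The step requiring the most care is the support matching in the forward direction: from "$F$ is a face" one must extract a supporting functional whose vanishing set among the $u_i^\ast$ is \emph{exactly} $\{u_i^\ast : i \notin I\}$, so that $\supp(y)$ is precisely $I$ rather than a proper subset. This is where one uses that faces of polytopes are exposed and that a supporting hyperplane meets $P$ in the convex hull of the generators it contains (and where the degenerate/improper faces must be treated by the conventions indicated above). The only other ingredients are the identity "row space of $U^\ast$" $= \ker U$ with $\ones$ inside it, and the strictly-positive-convex-combination description of relative interiors — both elementary, so the argument is short once they are in place.
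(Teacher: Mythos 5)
The paper itself offers no proof of this theorem --- it is quoted from Gr\"unbaum --- so your attempt is judged on its own. Your converse direction is correct, and the slack-vector mechanism (row space of $U^\ast$ equals $\ker U$, with $\ones$ inside it) is the standard route. The forward direction, however, has a genuine gap at exactly the step you flag and then do not actually close: from $\conv\{u_i^\ast : i \in [n]\setminus I\} = P\cap\mathcal{H} = \conv\{u_i^\ast : \langle w,u_i^\ast\rangle = c\}$ you conclude $\supp(y)=I$, but equality of these convex hulls only yields $\supp(y)\subseteq I$. Exposedness of faces and the fact that a supporting hyperplane meets $P$ in the hull of the generators it contains do not prevent some $u_j^\ast$ with $j\in I$ from lying on $\mathcal{H}$ --- e.g.\ a repeated column, or a generator contained in the hull of the others --- and then your argument only gives $0\in\relint\conv\{u_i : i\in\supp(y)\}$ for a proper subset $\supp(y)\subsetneq I$, which does not imply $0\in\relint\conv\{u_i : i\in I\}$. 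Note that the paper explicitly works with configurations having repeated columns, so this is not a corner case.

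In fact no argument can close this gap at the stated level of generality, because the forward implication is then false as literally written. Take $n=3$, $d=1$, $U=\begin{bmatrix}1 & -1 & 0\end{bmatrix}$ and $U^\ast$ with rows $\ones$ and $(0,0,1)$, so $u_1^\ast=u_2^\ast=(1,0)$, $u_3^\ast=(1,1)$, and $P$ is a segment. For $I=\{2,3\}$ the set $\conv\{u_1^\ast\}$ is a vertex of $P$, hence a face, yet $\conv\{u_2,u_3\}=[-1,0]$ has relative interior $(-1,0)$, which misses $0$; the exposing functional here has slack vector $(0,0,1)$, whose support $\{3\}$ is strictly smaller than $I$. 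The classical theorem is stated either for configurations whose elements are the distinct vertices of $P$, or with the left-hand side read in the ``coface'' sense: there is a face $F$ with $\{i : u_i^\ast \in F\} = [n]\setminus I$. Under either reading your proof completes immediately, since any $u_j^\ast\in\mathcal{H}$ lies in $F$ and therefore $j\notin I$, forcing $\supp(y)=I$; so the repair is to add that hypothesis or convention explicitly rather than to look for a cleverer supporting hyperplane. A minor related point: your dispatch of the improper face $F=P$ via $I=\emptyset$ also needs a convention, since $\relint\conv\emptyset=\emptyset$ does not contain $0$, and the same degeneracy recurs there (e.g.\ $I=\{j\}$ with $u_j^\ast$ a repeated column gives $\conv\{u_i^\ast : i\neq j\}=P$ while $u_j\neq 0$).
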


For $I \subseteq [n]$, $0 $ is in the relative interior of $\conv\{u_i: i \in I\}$ if and only if there is a $c \geq 0$ such that $\supp(c) = I$ and $Uc = 0$, which is if and only if $c$ is a positive dependence of $\mathcal{U}$ with $\supp(c)=I$. 
This yields the following corollary. 

\begin{corollary} \label{cor:faces and dependencies}
For any $I \subseteq  [n]$,  $ \conv \{u_i^\ast: i \in [n] \setminus I\}$ is a face (facet) of $\conv(\mathcal{U}^\ast)$ if and only if $I$ is the support of a positive dependence (circuit) of $\mathcal{U}$. 
\end{corollary}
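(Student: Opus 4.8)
The plan is to obtain both equivalences from Theorem~\ref{thm:Gale duality} combined with the remark recorded immediately before the corollary, namely that for $I\subseteq[n]$ the point $0$ lies in $\relint\conv\{u_i : i\in I\}$ precisely when $I=\supp(c)$ for some nonzero $c\geq 0$ with $Uc=0$ — that is, precisely when $I$ is the support of a positive dependence of $\mathcal{U}$. The ``face'' equivalence is then immediate: by Theorem~\ref{thm:Gale duality}, $\conv\{u_i^*: i\in[n]\setminus I\}$ is a proper face of $\conv(\mathcal{U}^*)$ iff $0\in\relint\conv\{u_i:i\in I\}$, which by the remark holds iff $I$ is a positive-dependence support. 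The case $I=\emptyset$ is consistent, since the empty set is not the support of any dependence and $\conv(\mathcal{U}^*)$ itself is not a proper face.

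For the ``facet'' half, the strategy is that facets are exactly the maximal proper faces, so I want to show the assignment $I\mapsto\conv\{u_i^*:i\in[n]\setminus I\}$ is an inclusion-reversing bijection from $\{\,\supp(c) : c \text{ a positive dependence of } \mathcal{U}\,\}$ onto the proper faces of $\conv(\mathcal{U}^*)$; then facets correspond to the inclusion-minimal positive-dependence supports. Surjectivity is clear because a proper face $F$ equals the convex hull of the points $u_i^*$ it contains, and the complementary index set is a positive-dependence support by the first half. The key point — which also yields injectivity and that a strictly smaller $I$ gives a strictly larger face — is that if $0\in\relint\conv\{u_i:i\in I\}$ then no $u_k^*$ with $k\in I$ can lie in $\conv\{u_i^*:i\in[n]\setminus I\}$: an affine dependence expressing such a $u_k^*$ through the remaining $u_i^*$ produces a vector $w$ with $w_k=1$, $w_i=0$ for $i\in I\setminus\{k\}$, and $\sum_i w_i=0$, lying in $\ker U^*$; since $\ker U^*=(\ker U)^\perp$ and the positive dependence $c$ with $\supp(c)=I$ lies in $\ker U$, pairing $w$ against $c$ forces $c_k=0$, contradicting $k\in\supp(c)$.

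It remains to identify the inclusion-minimal positive-dependence supports with the supports of positive circuits. One containment is trivial: a positive circuit is a positive dependence whose support is inclusion-minimal even among all nonzero dependences, hence in particular among the positive ones. For the other, let $c\geq 0$ be a positive dependence whose support $I$ is minimal among positive-dependence supports, and suppose toward a contradiction that $c$ is not a circuit, so there is a nonzero $d$ with $Ud=0$ and $\supp(d)\subsetneq I$. If $d\geq 0$ or $-d\geq 0$, then $\pm d$ is a positive dependence with support strictly inside $I$, a contradiction. Otherwise $d$ has entries of both signs; since $\supp(d)\subseteq I=\supp(c)$ we have $c_i>0$ on $\supp(d)$, so for the largest $\theta>0$ with $c+\theta d\geq 0$ the vector $c+\theta d$ is a positive dependence, nonzero (as $\supp(d)\subsetneq I$ and $c>0$ on $I$), whose support lies in $I$ but omits the index at which the bound $\theta$ is attained, again contradicting minimality of $I$. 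This proves the claim; and a set that is simultaneously a circuit support and a positive-dependence support is automatically the support of a positive circuit (any dependence with minimal support is a circuit), so the parenthetical statement matches.

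I expect the two substantive steps to be the ``no $u_k^*$ with $k\in I$ lies in the opposite hull'' observation, which underlies the inclusion-reversing bijection and hence the passage from ``maximal proper face'' to ``minimal support'', and the perturbation argument identifying minimal positive-dependence supports with circuit supports; once these are in place, Theorem~\ref{thm:Gale duality} supplies everything else.
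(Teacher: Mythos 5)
Your proof is correct and takes essentially the same route as the paper: the paper obtains the corollary immediately from Theorem~\ref{thm:Gale duality} together with the observation stated just before it, that $0 \in \relint\conv\{u_i : i \in I\}$ precisely when $I$ is the support of a positive dependence of $\mathcal{U}$. The facet/circuit parenthetical is left implicit in the paper as a standard consequence (facets are the maximal proper faces, minimal positive-dependence supports are positive circuit supports); your ``no $u_k^\ast$ with $k \in I$ lies in the complementary hull'' lemma and the perturbation argument are a correct and complete filling-in of exactly those omitted details.
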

It is customary to use the index set $J \subseteq [n]$ to denote both the collection $\{u_j \,:\, j \in J\}$ and the polytope $\conv\{u_j : i \in J\}$.

\begin{example} \label{ex:octahedron}
A classic illustration of Theorem~\ref{thm:Gale duality} is given by the dual configurations $\mathcal{U}$ and $\mathcal{U^\ast}$ shown in Figure~\ref{fig:Gale octahedron}, derived from 
\[ U =\begin{bmatrix}
    1 & 1 & -1 & -1 &  0 & 0 \\
    0 & 0 & -1 & -1 &  1 & 1
\end{bmatrix} \,\,\,\,\, \textup{ and } \,\,\,\,\,
U^\ast = \begin{bmatrix}
    1 &  1 &  1 &1 &  1 &  1 \\
    \hline
   1 & -1 & 0 & 0 & 0 & 0 \\
    0 & 0 & 1 & -1 & 0 & 0 \\
    0 & 0 & 0 & 0 & 1 & -1 
\end{bmatrix}
\] 
for which $n=6$ and $d=3$.
\begin{figure}[h!]
\begin{align*}
&
 \begin{tikzpicture}[baseline={($ (current bounding box.west) - (0,1ex) $)}, scale = .5]
\tikzstyle{bk}=[circle, fill =black ,inner sep= 2 pt,draw]
%%nodes
% nodes
\node (A) at (3.8, 0) {(1,2)};
\node (B) at (-2.6,-2.6) {(3,4)};
\node (C) at (0,3.5) {(5,6)};
\node[bk] (origin) at (0,0) {};
%%edges
\draw[thick,->]  (0,.09) -- (3, .09) ;
\draw[thick,->]  (0,-.09) -- (3, -.09) ;
\draw[thick,->]  (-.07,.07) -- (-2.17,-2.03);
\draw[thick,->]  (0.07,-.07) -- (-2.03,-2.17);
\draw[thick,->]  (.09,0) -- (.09,3) ;
\draw[thick,->]  (-.09,0) -- ( -.09,3) ;
\end{tikzpicture} &&
\begin{tikzpicture}[baseline={($ (current bounding box.west) - (0,1ex) $)}, scale = .5]
\tikzstyle{bk}=[circle, fill = white ,inner sep= 1 pt,draw]
%%nodes
\node (top) at (.7,3.5) [bk] {5};
\node (bottom) at (.7,-3.2) [bk] {6};
\node (v1) at (3.5,.5) [bk] {3};
\node (v2) at (-2,0)  [bk] {4};
\node (v3) at (2,0)  [bk] {1};
\node (v4) at (-.5,.5) [bk] {2};
%%edges
\draw (v1) --(top) -- (v2) -- (bottom) -- (v3) -- (top) -- (v4) -- (bottom) -- (v1) -- (v3) -- (v2) -- (v4) -- (v1);
\end{tikzpicture} 
\end{align*}
\caption{The configuration $\mathcal{U}$ and the octahedron $\conv(\mathcal{U}^\ast)$.} \label{fig:Gale octahedron}
\end{figure}
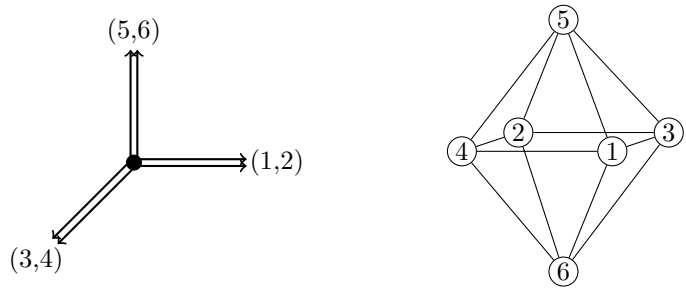
Here $\conv(\mathcal{U}^\ast)$ is an octahedron lying on the plane $x_1=1$ in $\RR^4$. 
The configuration $\mathcal{U} \subset \RR^2$ has $6$ positive circuits. The complements of their supports index the six facets of the octahedron. For example,  
$\{1,3,5\}$ is a positive circuit of $\mathcal{U}$ and its complement $\{2,4,6\}$ is a facet of $\conv(\mathcal{U}^\ast)$. The vector $(0,0,0,0,1,-1)$ is a circuit of $\mathcal{U}$ but since it is not positive, its complement $\{1,2,3,4\}$ is not a face of the octahedron; it is a cocircuit of $\mathcal{U}^\ast$ and corresponds to a hyperplane slicing through the interior of the octahedron.
\qed
\end{example}

\subsection{Graphical designs and oriented matroid duality} \label{subsec:designs and OM}
We now connect graphical designs to oriented matroid and Gale duality. 
% Let $\boldsymbol{\lambda} = \{\l_{i_1}, \ldots, \l_{i_k}\}$ denote a collection of eigenvalues, and let $\bar{\boldsymbol{\lambda}} = [m] \setminus \boldsymbol{\lambda}$. Let  $U_{\boldsymbol{\lambda}}$ denote a matrix whose rows consist of a basis for the eigenspaces $\L_{i_1}, \ldots, \L_{i_k}$. We abbreviate $\boldsymbol{\lambda} = \{\l_2, \ldots, \l_k\}$ as $\mathbf k$ and its complement  $\{\l_1, \l_{k+1}, \ldots, \l_m\}$ as $\overline{\mathbf k}$. If $\boldsymbol{\lambda} = \{\l\}$ has one element, we drop the braces and write $U_\l$.
Assume that the $m$ eigenspaces of $G$ have been ordered as $\L_1 < \cdots < \L_m$, with $\L_1 = \textup{Span}\{\ones\}$, $d_i := \dim(\L_i)$ and $s_k := \sum_{i=1}^k d_i$. Let $U \in \RR^{n \times n}$ be a matrix whose rows are a set of $n$ orthogonal eigenvectors of $G$. For any collection of eigenvalues $\boldsymbol{\l} = \{\l_{i_1}, \ldots \l_{i_j}\}$ of $G$, let $U_{\boldsymbol{\l}}$ denote the submatrix of $U$ whose rows (in order) are the eigenbases of $\Lambda_i$ for $i \in \boldsymbol{\l}$. 

Suppose we are interested in the $k$-designs of $G$ in this ordering for some $k < m$. Let $\mathbf k = \{\lambda_2, \ldots, \lambda_k\}$ and $\overline{\mathbf k} = \{\lambda_1, \lambda_{k+1}, \ldots, \lambda_m\}$. 
Then $U_{\boldsymbol{k}}$ and ${U}_{\overline{\mathbf k}}$ partition $U$ into two submatrices, each with $n$ columns, and $\ones$ is the first row of  ${U}_{\overline{\mathbf k}}$. The configuration $\cal U_{\boldsymbol{k}} \subset \RR^{s_k-1}$ is dual to the configuration $\mathcal{U}_{\overline{\mathbf k}} \subset \RR^{n-s_k+1}$ in the sense of 
Section~\ref{subsection: OM}. Keep in mind that $\cal U_{\boldsymbol{k}}$ and $\mathcal{U}_{\overline{\mathbf k}}$ are ordered vector configurations and not sets, and hence allow for repeated elements. 
%We collect several further properties into a lemma. 

\begin{lemma} 
The following hold. \label{lem: basic observations}

\begin{enumerate}
    \item $P_{\overline{\mathbf k}} := \conv(\mathcal{U}_{\overline{\mathbf k}})$ is a 
    $(n-s_k)$-dimensional polytope in $\RR^{(n-s_k)+1}$ lying on the hyperplane $x_1 = 1$. 
    \item The circuits of $\mathcal{U}_{\mathbf{k}}$ are in bijection with the cocircuits of 
    $\mathcal{U}_{\overline{\mathbf k}}$.
    \item The positive circuits of $\mathcal{U}_{\mathbf{k}}$ are in bijection with the facets of $P_{\overline{\mathbf k}}$. 
\end{enumerate}
\end{lemma}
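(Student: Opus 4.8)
The plan is to observe that the pair $(\mathcal{U}_{\mathbf k}, \mathcal{U}_{\overline{\mathbf k}})$ is literally a dual pair of vector configurations in the sense of Section~\ref{subsection: OM}, after which all three assertions become direct translations of Theorem~\ref{thm:Gale duality} and Corollary~\ref{cor:faces and dependencies}; the only real task is to keep the index bookkeeping straight.

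First I would check the hypotheses of Section~\ref{subsection: OM}. Because $d_1 = \dim\L_1 = 1$, the matrix $U_{\mathbf k}$ has $s_k - d_1 = s_k - 1$ rows and $U_{\overline{\mathbf k}}$ has $n - (s_k - 1) = n - s_k + 1$ rows, and together these are the $n$ rows of $U$, an orthogonal basis of $\RR^n$. Hence $U_{\mathbf k}$ has full row rank $s_k - 1$; setting $d := n - s_k$ gives $s_k - 1 = n - (d+1)$, so $\mathcal{U}_{\mathbf k} \subset \RR^{n-(d+1)}$ as required. The rows of $U_{\overline{\mathbf k}}$ are mutually independent, are orthogonal to every row of $U_{\mathbf k}$, and number $n - s_k + 1 = d + 1 = \dim \ker U_{\mathbf k}$, so they form a basis of $\ker U_{\mathbf k}$; that is, $\mathcal{U}_{\overline{\mathbf k}} = (\mathcal{U}_{\mathbf k})^\ast$. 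Finally $\lambda_1 \in \overline{\mathbf k}$ and $\L_1 = \spanset\{\ones\}$, so $\ones$ is the first row of $U_{\overline{\mathbf k}}$. Thus every standing assumption of Section~\ref{subsection: OM} holds for $(\mathcal{U}_{\mathbf k}, \mathcal{U}_{\overline{\mathbf k}})$; in particular the positive dependence $\sum_i u_i = U_{\mathbf k}\ones = 0$ is automatic, since each row of $U_{\mathbf k}$ is an eigenvector with eigenvalue $\neq 1$, hence orthogonal to $\ones$ by \eqref{eq:0 average}.

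With the setup in place the three statements follow at once. Statement (1) is fact (i) of Section~\ref{subsection: OM} applied with $\mathcal{U}^\ast = \mathcal{U}_{\overline{\mathbf k}}$ and $d = n - s_k$: $P_{\overline{\mathbf k}} = \conv(\mathcal{U}_{\overline{\mathbf k}})$ is an $(n-s_k)$-dimensional polytope in $\RR^{(n-s_k)+1}$ lying on the hyperplane $x_1 = 1$. Statement (2) is the identity, noted in Section~\ref{subsection: OM}, that $\ker U_{\mathbf k}$ equals the row space of $U_{\overline{\mathbf k}}$: the minimally supported nonzero vectors of the former (the circuits of $\mathcal{U}_{\mathbf k}$) coincide with those of the latter (the cocircuits of $\mathcal{U}_{\overline{\mathbf k}}$), and conversely. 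Statement (3) is Corollary~\ref{cor:faces and dependencies} with $\mathcal{U} = \mathcal{U}_{\mathbf k}$: for $I \subseteq [n]$, $\conv\{u_i^\ast : i \in [n]\setminus I\}$ is a facet of $P_{\overline{\mathbf k}}$ if and only if $I$ is the support of a positive circuit of $\mathcal{U}_{\mathbf k}$; since any two positive circuits with the same support are positive multiples of one another, sending a facet to the corresponding (scaling class of) positive circuit is a bijection.

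I expect no genuine obstacle here — the content is carried entirely by orthogonality of the eigenbasis — but two points deserve a sentence of care. One is that $\mathcal{U}_{\overline{\mathbf k}}$ is an ordered configuration and may have repeated columns, so $P_{\overline{\mathbf k}}$ can have fewer than $n$ vertices; this is harmless, since the dimension claim in (1) depends only on $\rk U_{\overline{\mathbf k}} = n - s_k + 1$ (a statement about rows), and Theorem~\ref{thm:Gale duality} and Corollary~\ref{cor:faces and dependencies} are already phrased at the level of configurations rather than point sets. The other, and the only place where the hypothesis $\dim\L_1 = 1$ is truly used, is keeping the shift between $s_k$ and $s_k - 1$ consistent throughout the dimension count.
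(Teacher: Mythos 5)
Your proposal is correct and matches the paper's intent: the paper states this lemma without a separate proof, treating it as an immediate consequence of the Section~\ref{subsection: OM} setup (fact (i), the circuit/cocircuit duality via $\ker U = \operatorname{row}(U^\ast)$, and Corollary~\ref{cor:faces and dependencies}), which is exactly what you invoke after verifying that $(\mathcal{U}_{\mathbf k}, \mathcal{U}_{\overline{\mathbf k}})$ is a dual pair with $\ones$ as the first row of $U_{\overline{\mathbf k}}$. Your bookkeeping of the shift $s_k-1 = n-(d+1)$ with $d = n-s_k$, and your remarks on repeated columns and on identifying positive circuits up to scaling, are accurate and consistent with the paper.
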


The polytope $P_{\overline{\mathbf k}}$ is a generalized version of 
an {\em eigenpolytope} of a graph, defined by Godsil in \cite{Godsil_GGP}.

\begin{definition} \cite{Godsil_GGP} \label{def:Godsil eigenpolytope}
Let $\lambda$ be an eigenvalue of $G$, $U_\lambda$ be a matrix whose rows form a basis of the eigenspace $\L_\lambda$, and $\mathcal{U}_\lambda$ be the collection of columns of $U_\lambda$. Then the polytope $P_\lambda := \conv(\mathcal{U}_\lambda)$ is the {\em eigenpolytope} of $G$ with respect to $\lambda$. 
\end{definition}

Even though the definition of an eigenpolytope is dependent on the choice of a basis of the eigenspace,  eigenpolytopes are well defined up to combinatorial type, which is all that matters here.  Indeed, the eigenpolytopes defined from two different bases of an eigenspace 
differ by an invertible linear transformation which preserves combinatorial structure. There is a rich literature on eigenpolytopes which we will comment on at the end of this section. 

The polytopes of interest to us are of the form $P_{\overline{\mathbf k}}$ (as in Lemma~\ref{lem: basic observations}) that correspond to multiple eigenvalues of $G$ including $\lambda_1 = 1$. To address them, we generalize Definition~\ref{def:Godsil eigenpolytope} as follows. 

\begin{definition} \label{def:our eigenpolytopes}
Let $\boldsymbol{\lambda} = \{\lambda_{i_1}, \ldots, \lambda_{i_l} \}$ be a set of eigenvalues of $G$ and 
$U_{\boldsymbol{\lambda}}$ be the submatrix of $U$ consisting of the eigenvectors associated to $\boldsymbol{\lambda}$. Define $\mathcal{U}_{\boldsymbol{\lambda}}$ to be the vector configuration of the $n$ columns 
of $U_{\boldsymbol{\lambda}}$ and $P_{\boldsymbol{\lambda}} := \conv(\mathcal{U}_{\boldsymbol{\lambda}})$. The polytope $P_{\boldsymbol{\lambda}}$ is an {\em eigenpolytope} of $G$ associated to $\boldsymbol{\lambda}$. 
\end{definition}

We now have all the tools to state our main structure theorem. 

\begin{theorem} \label{thm: gale duality and designs}
Let $G = ([n],E)$ be a graph with eigenspaces ordered as 
$\L_1 < \ldots < \L_m$, let $W \subseteq [n]$ be a subset of the vertices of $G$, and let $k < m$. Consider the
dual configurations $\mathcal{U}_{\boldsymbol{k}}$ and  $\mathcal{U}_{\overline{\boldsymbol{k}}}$ as defined before. Then $W$ is a (minimal) positively weighted $k$-design of $G$ if and only if the convex hull of the subset of $\mathcal{U}_{\overline{\boldsymbol{k}}}$ indexed by $[n]\backslash W$ is a (facet) face of $P_{\overline{\boldsymbol{k}}} =\conv(\mathcal{U}_{\overline{\boldsymbol{k}}})$. 
\end{theorem}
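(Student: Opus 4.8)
The plan is to turn the theorem into a short translation: Lemma~\ref{lem:computational check}(2) describes positively weighted $k$-designs algebraically, Corollary~\ref{cor:faces and dependencies} describes the faces of $P_{\overline{\boldsymbol{k}}}$ in terms of positive dependences (resp.\ positive circuits) of the dual configuration, and the bridge is the single observation that ``positively weighted $k$-design'' and ``support of a positive dependence of $\mathcal{U}_{\boldsymbol{k}}$'' are the same object.

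First I would pin down the duality. Since the rows of $U$ are pairwise orthogonal eigenvectors, $U_{\boldsymbol{k}}U_{\overline{\boldsymbol{k}}}^{\top}=0$; as $U_{\boldsymbol{k}}$ has full row rank $s_k-1$ and $U_{\overline{\boldsymbol{k}}}$ has full row rank $n-s_k+1$, the rows of $U_{\overline{\boldsymbol{k}}}$ form a basis of $\ker U_{\boldsymbol{k}}$, with $\ones$ the first of them. Hence $\mathcal{U}_{\boldsymbol{k}}$ and $\mathcal{U}_{\overline{\boldsymbol{k}}}$ are a dual pair of vector configurations in the sense of Section~\ref{subsection: OM} (this is already recorded in Lemma~\ref{lem: basic observations}), and $P_{\overline{\boldsymbol{k}}}=\conv(\mathcal{U}_{\overline{\boldsymbol{k}}})$ plays the role of $\conv(\mathcal{U}^{\ast})$.

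Next I would translate the design condition. By Lemma~\ref{lem:computational check}(2), $W$ is a positively weighted $k$-design iff there is a nonzero $a\geq 0$ with $\supp(a)=W$ and $\phi^{\top}a=0$ for all $\phi\in\Lambda_2,\ldots,\Lambda_k$; since the rows of $U_{\boldsymbol{k}}$ are a basis of $\Lambda_2\oplus\cdots\oplus\Lambda_k$, this last requirement is exactly the linear system $U_{\boldsymbol{k}}a=0$. Thus $W$ is a positively weighted $k$-design if and only if $W$ is the support of a positive dependence of $\mathcal{U}_{\boldsymbol{k}}$. Feeding this into Corollary~\ref{cor:faces and dependencies} for the dual pair $\mathcal{U}_{\boldsymbol{k}},\mathcal{U}_{\overline{\boldsymbol{k}}}$ immediately gives the ``face'' half: $W$ is a positively weighted $k$-design iff $\conv\{u_i^{\ast}:i\in[n]\setminus W\}$ is a face of $P_{\overline{\boldsymbol{k}}}$.

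The remaining task, and the one place I expect genuine (if small) work, is to match ``minimal'' with ``facet''. Because the correspondence is literally $W=\supp(a)$, a positively weighted $k$-design is inclusion-minimal iff its vertex set is an inclusion-minimal support of a positive dependence of $\mathcal{U}_{\boldsymbol{k}}$; and Corollary~\ref{cor:faces and dependencies} turns ``support of a positive circuit'' into ``facet''. So it suffices to show that the inclusion-minimal positive dependences are exactly the positive circuits. One direction is immediate: a positive-dependence support strictly inside the support of a positive circuit would be a nonzero element of $\ker U_{\boldsymbol{k}}$ whose support is properly contained in a circuit's support, which contradicts minimality of the circuit. For the converse, suppose $c\geq 0$ has inclusion-minimal support among positive dependences but is not a circuit; then there is a nonzero $d\in\ker U_{\boldsymbol{k}}$ with $\supp(d)\subsetneq\supp(c)$. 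Flipping the sign of $d$ if needed, $d$ has a positive coordinate; set $\lambda:=\min\{c_i/d_i:d_i>0\}$, which is positive because $\supp(d)\subseteq\supp(c)$. Then $c-\lambda d\geq 0$ lies in $\ker U_{\boldsymbol{k}}$, is nonzero (otherwise $c=\lambda d$ forces $\supp(c)=\supp(d)\subsetneq\supp(c)$), and its support is a proper subset of $\supp(c)$ (it omits any index attaining the minimum), contradicting minimality of $\supp(c)$. Chaining the three reductions proves the theorem.
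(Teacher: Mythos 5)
Your proposal is correct and follows essentially the same route as the paper: Lemma~\ref{lem:computational check}(2) translates positively weighted $k$-designs into supports of positive dependences of $\mathcal{U}_{\boldsymbol{k}}$, and Corollary~\ref{cor:faces and dependencies} converts these into faces (facets) of $P_{\overline{\boldsymbol{k}}}$. The only difference is that you explicitly verify that inclusion-minimal supports of positive dependences coincide with supports of positive circuits, a step the paper's two-line proof leaves implicit; your argument for it is sound.
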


\begin{proof}
By Lemma~\ref{lem:computational check} (2), $W$ is a (minimal) positively weighted $k$-design of $G$ if and only if $W = \supp(a)$ for a positive (circuit) dependence $a$ of $\mathcal{U}_{\boldsymbol{k}}$. The statement now follows from Corollary~\ref{cor:faces and dependencies}. 
\end{proof}

\begin{remark} \label{rem:smallest design largest facet}
The positively weighted $k$-designs of minimum cardinality correspond to the facets of $P_{\overline{\boldsymbol{k}}}$ that contain the maximum number of columns of 
${U}_{\overline{\boldsymbol{k}}}$. If all columns of ${U}_{\overline{\boldsymbol{k}}}$ are vertices of $P_{\overline{\boldsymbol{k}}}$, then the positively weighted $k$-designs of minimum cardinality correspond to the facets of $P_{\overline{\boldsymbol{k}}}$ with the maximum number of vertices. 
\end{remark}

Though it could be difficult to find the facet of a polytope containing the most vertices, we will see that this strategy is successful in examples.

\begin{example}
The {\bf Petersen graph} shown in Figure~\ref{fig:petersen} has three eigenvalues $ \l_1 = 1^{(1)}, \lambda_2 = (-2/3)^{(4)}$, and $\lambda_3 =(1/3)^{(5)}$, listed in frequency order, with multiplicity recorded as exponents. The eigenpolytopes of the Petersen graph for individual eigenvalues were studied in \cite{PadrolPfeifle} and \cite{PowersPetersen}. 

\begin{figure}[h!]
\begin{align*}
&\begin{tikzpicture}[baseline={($ (current bounding box.west) - (0,1ex) $)},scale = .5]
        \tikzstyle{bk}=[circle, fill = white,inner sep= 2 pt,draw]
%%nodes
\node (v1) at (3* .951, 3*.309) [bk] {1};
\node (v2) at (0,3)  [bk] {2};
\node (v3) at (3* -.951, 3*.309)  [bk] {3};
\node (v4) at  (3*-.588,3*-.809)  [bk] {4};
\node (v5) at (3*.588,3*-.809)   [bk] {5};
\node (w1) at (1.5* .951, 1.5*.309) [bk] {6};
\node (w2) at (0,1.5) [bk] {7};
\node (w3) at (1.5* -.951, 1.5*.309) [bk] {8};
\node (w4) at (1.5*-.588,1.5*-.809)  [bk] {9};
\node (w5) at (1.5*.588,1.5*-.809)  [bk] {10};
%% edges
\draw (w1) -- (w3) -- (w5) -- (v5) -- (v1) -- (w1) -- (w4) -- (v4) -- (v5);
\draw (v4) -- (v3) -- (w3);
\draw (v3) -- (v2) --(w2) -- (w5);
\draw (v1) -- (v2) -- (w2) -- (w4);
\end{tikzpicture}
&& U_{\overline{\mathbf{2}}} =
 \begin{bmatrix}
     1&1&1&1&1&1&1&1&1&1\\
     \hline 
      0&-1&0&1&1&0&-1&0&0&0\\
      1&-1&-1&0&1&1&-1&0&0&0\\
      0&0&1&0&0&0&-1&1&-1&0\\
      0&-1&-1&0&0&1&0&0&1&0\\
      0&-1&-1&0&1&0&0&0&0&1
      \end{bmatrix}
\end{align*} 
\caption{The Petersen graph. \label{fig:petersen}}
\end{figure}
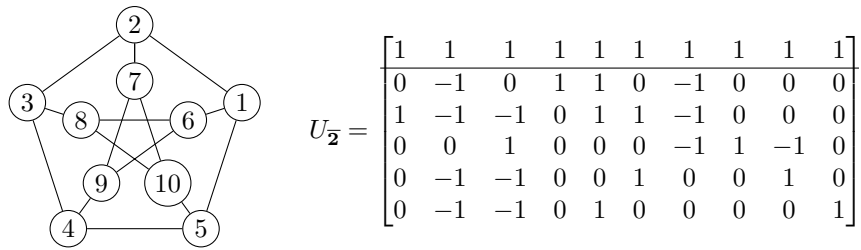
% We list the indices of the vertices in each facet of $P_{\overline{\mathbf{2}}}$ and complimentary set corresponding to minimal extremal designs in the following table.
% $$ 
% \begin{array}{|l|l||l|l|}
% \hline
% \textup{facet} & \textup{design} & \textup{facet} & \textup{design}\\
% \hline
% 1,2,3,5,8,10 & 4,6,7,9 & 1, 2, 3, 6, 8 & 4,5,7,9,10\\
% 1, 5, 6, 7, 9, 10 & 2,3,4,8 & 1, 2, 6, 7, 9 & 3,4,5,8,10\\
% 1, 3, 4, 5, 6, 8 & 2,7,9,10 & 6, 7, 8, 9, 10 & 1,2,3,4,5\\
% 2, 3, 4, 5, 7, 10 & 1,6,8,9 & 2, 3, 4, 7, 9 & 1,5,6,8,10\\
% 1, 2, 4, 5, 7, 9 & 3,6,8,10 & 3, 4, 6, 8, 9 & 1,2,5,7,10\\
% 4, 5, 6, 8, 9, 10 & 1,2,3,7 & 2, 3, 7, 8, 10 & 1,4,5,6,9\\
% 3, 4, 7, 8, 9, 10 & 1,2,5,6 & 1, 5, 6, 8, 10 & 2,3,4,7,9\\
% 1, 2, 3, 4, 6, 9 & 5,7,8,10 & 1, 2, 5, 7, 10 & 3,4,6,8,9\\
% 1, 2, 6, 7, 8, 10 & 3,4,5,9 & 1, 4, 5, 6, 9 & 2,3,7,8,10\\
% 2, 3, 6, 7, 8, 9 & 1,4,5,10 & 1,2,3,4,5,    & 6,7,8,9,10\\
% && 4,5,7,9,10 & 1,2,3,6,8\\
% && 3,4,5,8,10 & 1,2,6,7,9\\
% \hline
% \end{array} 
% $$ 
\begin{figure}[h] 
  \centering
  \begin{subfigure}[b]{0.5\linewidth} 
    \centering
    \begin{tikzpicture}[baseline={($ (current bounding box.west) - (0,1ex) $)},scale = .5]
        \tikzstyle{bk}=[circle, fill = white,inner sep= 2 pt,draw]
          \tikzstyle{red}=[circle, fill = red,inner sep= 2 pt,draw = red]
%%nodes
\node (v1) at (3* .951, 3*.309) [red] {};
\node (v2) at (0,3) [red] {};
\node (v3) at (3* -.951, 3*.309) [red] {};
\node (v4) at (3*-.588,3*-.809)  [red] {};
\node (v5) at (3*.588,3*-.809)  [red] {};
\node (w1) at (1.5* .951, 1.5*.309) [bk] {};
\node (w2) at (0,1.5) [bk] {};
\node (w3) at (1.5* -.951, 1.5*.309) [bk] {};
\node (w4) at (1.5*-.588,1.5*-.809)  [bk] {};
\node (w5) at (1.5*.588,1.5*-.809)  [bk] {};
%% edges
\draw (w1) -- (w3) -- (w5) -- (v5) -- (v1) -- (w1) -- (w4) -- (v4) -- (v5);
\draw (v4) -- (v3) -- (w3);
\draw (v3) -- (v2) --(w2) -- (w5);
\draw (v1) -- (v2) -- (w2) -- (w4);
\end{tikzpicture}
    \caption{}
  \end{subfigure}%
  \begin{subfigure}[b]{0.5\linewidth}
    \centering
  \begin{tikzpicture}[baseline={($ (current bounding box.west) - (0,1ex) $)},scale = .5]
        \tikzstyle{bk}=[circle, fill = white,inner sep= 2 pt,draw]
        \tikzstyle{red}=[circle, fill = red,inner sep= 2 pt,draw = red]
        \tikzstyle{pink}=[circle, fill = pink,inner sep= 2 pt,draw = pink]
%%nodes
\node (v1) at (3* .951, 3*.309) [bk] {};
\node (v2) at (0,3) [pink] {};
\node (v3) at (3* -.951, 3*.309) [bk] {};
\node (v4) at (3*-.588,3*-.809)  [bk] {};
\node (v5) at (3*.588,3*-.809)  [bk] {};
\node (w1) at (1.5* .951, 1.5*.309) [bk] {};
\node (w2) at (0,1.5) [red] {};
\node (w3) at (1.5* -.951, 1.5*.309) [bk] {};
\node (w4) at (1.5*-.588,1.5*-.809)  [pink] {};
\node (w5) at (1.5*.588,1.5*-.809)  [pink] {};
%% edges
\draw (w1) -- (w3) -- (w5) -- (v5) -- (v1) -- (w1) -- (w4) -- (v4) -- (v5);
\draw (v4) -- (v3) -- (w3);
\draw (v3) -- (v2) --(w2) -- (w5);
\draw (v1) -- (v2) -- (w2) -- (w4);
\end{tikzpicture}
     \caption{}
  \end{subfigure}
    \caption{The minimal positively weighted extremal designs on the Petersen graph with $\L_1 < \L_2 <\L_3$ (up to isomorphism). Figure A gives the design from a simplicial facet of $P_{\{1,3\}}$ and is combinatorial. Figure B gives the design from a 6-vertex facet and needs positive weights of two different types. \label{figure: petersen pos designs}}
\end{figure}
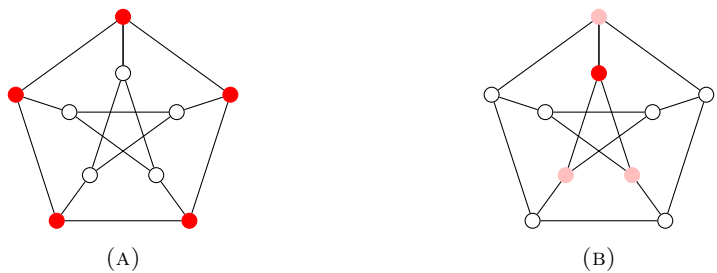

Suppose we want to know the minimal positively weighted extremal designs 
of $G$ in frequency order. By Theorem~\ref{thm: gale duality and designs}, we compute 
$\overline{\mathbf{2}} = \{1,3\}$ and the matrix $U_{\overline{\mathbf{2}}}$ whose first row is a basis for $\Lambda_1$ and next $5$ rows form a basis for $\Lambda_3$. The eigenpolytope $P_{\overline{\mathbf{2}}}$ is the convex hull of $\cal U_{\overline{\mathbf{2}}}$.
This is a $5$-dimensional polytope with $10$ vertices, $22$ facets and 
face-vector $(10, 45, 90, 75, 22)$. The $22$ facets come in two symmetry classes. There are $10$ facets with $6$ vertices and $12$ facets that are $4$-simplices. Therefore, there are $10$ minimal positively weighted extremal designs with $4=10-6$ elements and $12$ minimal positively weighted extremal designs with $5=10-5$ elements.  The two types of designs are shown in Figure \ref{figure: petersen pos designs}. 
\qed
\end{example}

Theorem~\ref{thm: gale duality and designs} provides a natural adjacency structure on the minimal positively weighted $k$-designs of $G$: two such designs are adjacent if the facets of $P_{\overline{\mathbf k}}$ that 
index them meet on a ridge (a face of codimension $2$). Therefore, the adjacency graph of  minimal positively weighted $k$-designs is precisely the edge skeleton of the polytope polar to $P_{\overline{\mathbf k}}$. 
This observation is subsumed by the general fact that the faces of $P_{\overline{\mathbf k}}$ (and hence also the faces of the polar polytope) index the 
positively weighted $k$-designs of $G$. Thus one can assign (non-minimal) positively weighted $k$-designs to the edges of the adjacency graph which we can think of as a common coarsening of the minimal designs on the two vertices of the edge. More precisely, if there are 
 two adjacent minimal positively weighted $k$-designs $W=\supp(a)$ and
 $W'=\supp(a')$, then so is 
$W \cup W'$ since $U_{\boldsymbol{k}}(a+a') = 0$ and $a+a' \geq 0$. The non-minimal design 
$W \cup W'$ lives on the edge joining $W$ and $W'$.

\begin{example} \label{ex: adjacent designs petersen}
We zoom in on the adjacency graph of the minimal positively weighted extremal designs of the Petersen graph. Figure \ref{fig: Zoomed in petersen adajcency 12345} shows the
neighborhood of the minimal design $W=\{1,2,3,4,5\}$. On the edges one sees non-minimal designs as explained above.
\end{example}

\begin{figure}[h]
    \centering
    \includegraphics[scale = .57]{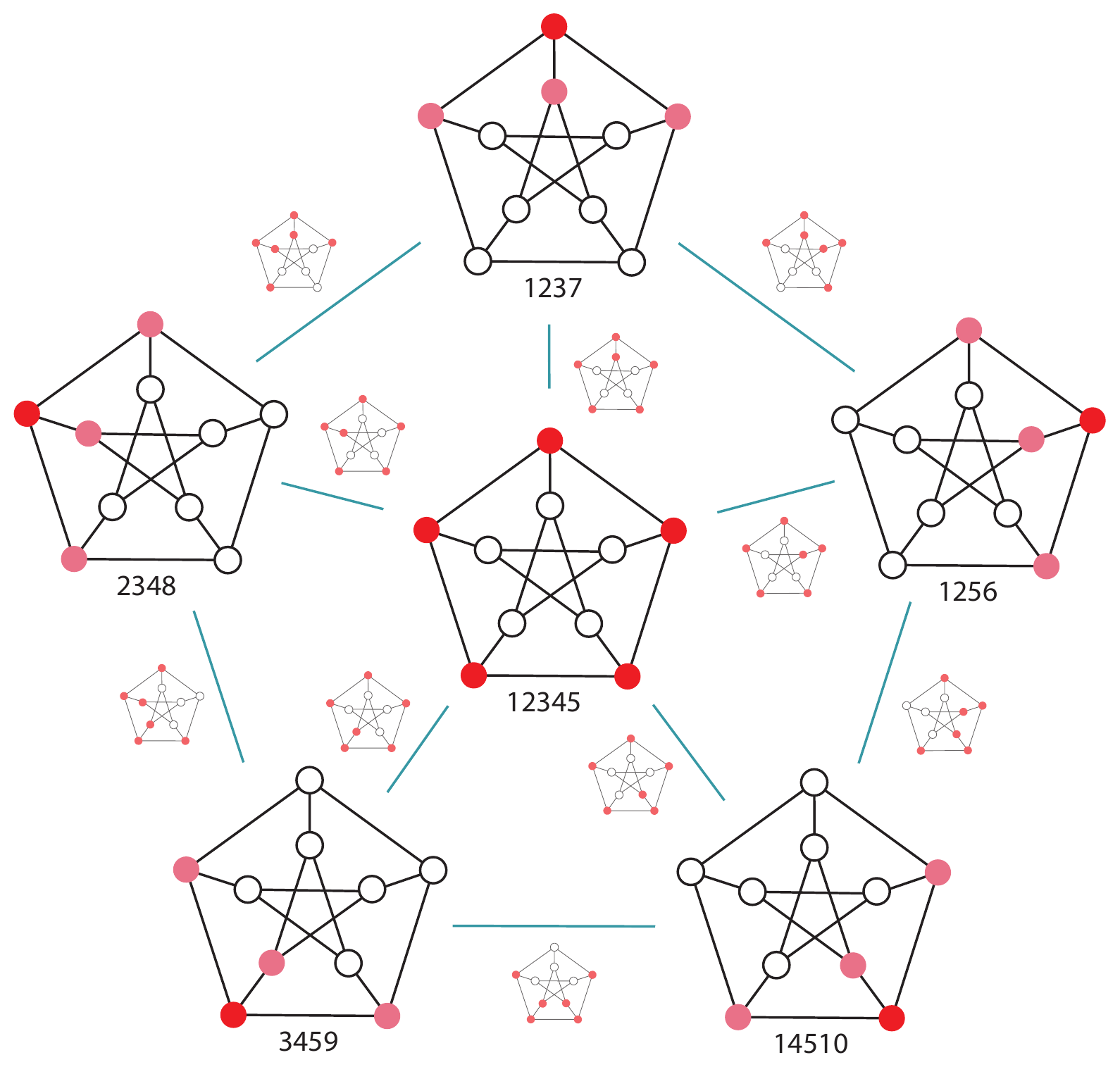}
    \caption{The minimal extremal positively weighted designs of the Petersen graph 
    adjacent to $\{1,2,3,4,5
    \}$, and the non-minimal positively weighted extremal designs connecting them. }
    \label{fig: Zoomed in petersen adajcency 12345}
\end{figure}

Next we show an example in which the eigenpolytope $P_{\boldsymbol{\l}}$ can be much simpler than what the size of $U_{\boldsymbol{\l}}$ predicts.  This happens when $U_{\boldsymbol{\l}}$ has repeated columns which can dramatically cut down the number of vertices of $P_{\boldsymbol{\l}}$. 

\begin{example} \label{ex: trunc tetra polytope}
Let $G=([12], E)$ be the {\bf truncated tetrahedral graph} (see Figure \ref{fig: trunctetra and P_5}). The eigenvalues of $G$ in frequency order are 
$$\l_1 = 1^{(1)}, \l_2 = 2/3^{(3)} ,\l_3 = -2/3^{(3)}, \l_4 = -1/3^{(3)}, \l_5 =0^{(2)}.$$ 
We look again at minimal positively weighted extremal designs in this graph which are all $4$-designs.
Recall that $\mathbf{4} = \{\l_2, \l_3,\l_4\}$ and $\overline{\mathbf{4}} = \{\l_1, \l_5\}$. We compute 
\[ U_{\bar{\mathbf{4}}} = \begin{bmatrix}
  1 & \phantom{-} 1   &    1 & 1    &    \phantom{-}1 & 1   &    1 & \phantom{-}1   &    1 & \phantom{-}1   &    1 & 1  \\
  \hline 
  1 & -1   &  0  &    0   &  -1    &  1    &  1  &   -1  &    0  &  -1   &   1 &     0\\
  0 & -1 &     1   &   1   &  -1   &   0  &    0 &    -1 &    1  &   -1&      0   &   1\\
\end{bmatrix}.\] 
Since each column of $U_{\bar{\mathbf{4}}}$ is repeated four times, $P_{\bar{\mathbf{4}}} \simeq \Delta_2$ is a triangle, shown in Figure \ref{fig: trunctetra and P_5}, with each vertex labeled by the indices of the four columns of $U_{\bar{\mathbf{4}}}$ that coincide with that vertex.  

Consider the facet (edge) of $P_{\bar{\mathbf{4}}}$ defined by the vertices with labels $\{3,4,9,12\}$ and $\{1,6,7,11\}$.  The complimentary index set is $\{2,5,8,10\}$.  By Theorem~\ref{thm: gale duality and designs}, $W = \{2,5,8,10\}$ is a minimal positively weighted $4$-design on the truncated tetrahedral graph. Moreover, there are exactly 3 such designs, one for each facet of $P_{\bar{\mathbf{4}}}$.

\begin{figure}[h]
    \centering
    \begin{align*}
&       \begin{tikzpicture}[scale =.4]
\tikzstyle{bk}=[fill = white,circle, draw=black, inner sep=2 pt]
\tikzstyle{red}=[fill =red,circle, draw=red, inner sep=2 pt]
 \foreach \y in {1,2,3,4,5,6,7,8,9}
        {\node at (40*\y +72:3) (a\y) [bk] {};}
\foreach \y in {1,2,3}
        {\node at (120*\y +30:1.2) (b\y) [bk] {};}
\node at (a8)  [red, label=right:{2}] {};
\node at (b3)  [red,label=right:{8}] {};
\node at (a3)  [red, label=left:{5}] {};
\node at (a4)  [red, label=left:{10}] {};
\draw (b1) -- (b2) -- (b3) -- (b1);
\draw (a1) -- (a2) -- (a3) -- (a4) -- (a5) -- (a6) -- (a7) -- (a8) -- (a9) --(a1);
\draw (a7) -- (a9);
\draw (a1) -- (a3);
\draw (a4) -- (a6);
\draw (b1) -- (a2);
\draw (b2) -- (a5);
\draw (b3) -- (a8);
\end{tikzpicture} 
&&    \begin{tikzpicture} 
 \tikzstyle{bk}=[circle, draw = black, fill = black ,inner sep = 2 pt]
    \node at (1,0) (a) [bk] {} ;
        \node at (1.9, 0) [] {\{1,6,7,11\}} ;
    \node at (0,1) (b) [bk] {} ;
        \node at (0,1.3) [] {\{3,4,9,12\}} ;
    \node at (-1,-1) (c) [bk] {} ;
        \node at (-1.9, .-1) [] {\{2,5,8,10\}} ;
 \draw[thick] (a)--(b)--(c)--(a);
  \fill[black, opacity =.1] (1,0) to (0,1) to (-1,-1) to (1,0);
\end{tikzpicture} 
    \end{align*}
    \caption{A minimal positively weighted 4-design and the eigenpolytope $P_{\bar{\mathbf{4}}}$ of the truncated tetrahedral graph.}
    \label{fig: trunctetra and P_5}
\end{figure}
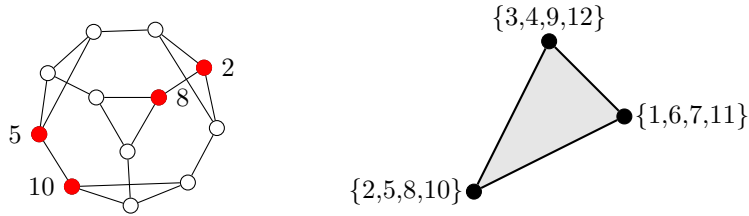

In this example, 
\[ U_{\bf 4} = \begin{bmatrix}
-1 &	-1.5 &	-0.5 &	1.5 &	2  &	1.5  &	-0.5 &	-1.5 &	-1 &	1 &	0 &	0 \\
2  &	1.5 &	1.5 &	-0.5 &	-1	 &-1.5 &-1.5 &	-0.5 &	-1 &	0 &	1	 &0 \\
-1 &	-0.5 &-1.5 &-1.5  &	-1 &	-0.5 &	1.5 &	1.5 &	2 &	0 &	0 &	1 \\
\hline 
0 &	-1 &	1 &	-1	 &0	 &1	 &-1 &	1 &	0 &	0 &	0	 &0 \\
-1 & 	0 &	1 &	-1 &	1 &	0 &	0 &	0 &	0	 &-1 &	1 &	0 \\
0 &	0 &	0 &	0 &	1 &	-1 &	1 &	0 &	-1 &	-1	 &0 &	1\\
\hline 
-1 &	0 &	1 &	0 &	-1 &	0 &	1 &	0 &	-1	 &1	 &0	 &0\\
-1 &	0 &	0 &	1 &	-1 &	0 &	0 &	1 &	-1 &	0 &	1 &	0\\
-1 &	1 &	0 &	0 &	-1 &	1 &	0 &	0 &	-1 &	0 &	0  &	1
\end{bmatrix}. \]
The circuit associated with the design $W = \{2,5,8,10\}$ is $e_2 + e_5 + e_8 + e_{10}$, a $0/1$ vector, i.e., $U_{\bf 4}\ones_W=0$. Therefore, $W$ is also combinatorial.
\qed
\end{example}

Gale duality has traditionally been used to understand the facial structure of a polytope by 
analyzing the vector configuration dual to the vertices of the polytope. The dual configuration 
is called the {\em Gale dual} of the polytope. This has been especially successful when 
the Gale dual configuration lies in a low-dimensional space, and many surprising properties of polytopes have been discovered through Gale duality (see \cite[Chapter 6]{Ziegler}, \cite{GrunbaumBook}).  In this paper we propose using Gale duality in the reverse direction, namely, use the combinatorics of eigenpolytopes to understand positively weighted graphical designs.  This is especially effective when the eigenpolytope is easy to understand. We already saw this in action in Example~\ref{ex: trunc tetra polytope} where the eigenpolytope was just a triangle. Here is a bigger example to drive home this strategy of using polytopes to 
inform designs.

\begin{example}
Let $G = ([48], E)$ be the {\bf  truncated cuboctahedral graph} shown in 
Figure~\ref{fig: truncated cubocta design}. This graph has $17$ eigenvalues, listed below in 
frequency order, with exponents showing their multiplicities:
\begin{align*}
 & 1^{(1)},-1^{(1)}, 0.9107^{(3)}, -0.9107^{(3)},
    0.7810^{(3)},
   -0.7810^{(3)},
   2/3^{(2)},
   -2/3^{(2)},
    0.6045^{(3)}, \\
  & -0.6045^{(3)},
    1/3^{(4)},
   -1/3^{(4)},
    0.2440^{(3)},
   -0.2440^{(3)},
    0.1569^{(3)},
   -0.1569^{(3)},
         0^{(4)}
\end{align*}

The matrix $U_{\overline{\mathbf{16}}}$ is made up of 8 horizontally concatenated 
copies of \[ \begin{bmatrix}
1 & 1 & 1 & 1 & 1  & 1  \\
\hline 
1 & 0 & 0 & 0 & -1 & 0  \\
0 & 1 & 0 & 0 &  0 & -1 \\
0 & 0 & 1 & 0 &  0 & -1 \\
0 & 0 & 0 & 1 & -1 & 0  \\
\end{bmatrix}.\]  
For an index set $I\subseteq [6],$ let $C(I) = \{j \in [48]: j \equiv i \mod 6  \text{ for some } i \in I \}$, that is, all vertices of $G$ indexing the columns of $U_{\overline{\mathbf{16}}}$ that correspond to columns indexed by $I$ of the above matrix. For example, $C(\{1\}) = \{ 1,7,13,19,25,31,37,43\}$.

Using Polymake \cite{Polymake}, we find that the four-dimensional polytope $P_{\overline{\mathbf{16}}}$, which has only $6$ vertices, has $8$ facets indexed by the following collections of vertices: 
\begin{align*}
    & C(\{3,4,5,6\}), C(\{2,4,5,6\}), C(\{2,3,4,5\}),   C(\{1,2,3,5\}), \\
    & C(\{1,2,3,4\}) , C(\{1,3,5,6\})   C(\{1,2,4,6\}), C(\{1,2,5,6\}).  
\end{align*}
By Theorem~\ref{thm: gale duality and designs}, $G$ has eight minimal positively weighted extremal designs which we can read off from the complements of the vertex labels of a facet.  For instance, the facet $   C(\{3,4,5,6\})$ is a certificate that 
$$C(\{1,2\}) = \{1,2,7,8,13,14,19,20,25,26,31,32,37,38,43,44\}$$
indexes a minimal positively weighted extremal design, which also happens to be a combinatorial design. We exhibit this graphical design in Figure \ref{fig: truncated cubocta design}.

\begin{figure}[h]
    \centering
    \includegraphics[scale = .5]{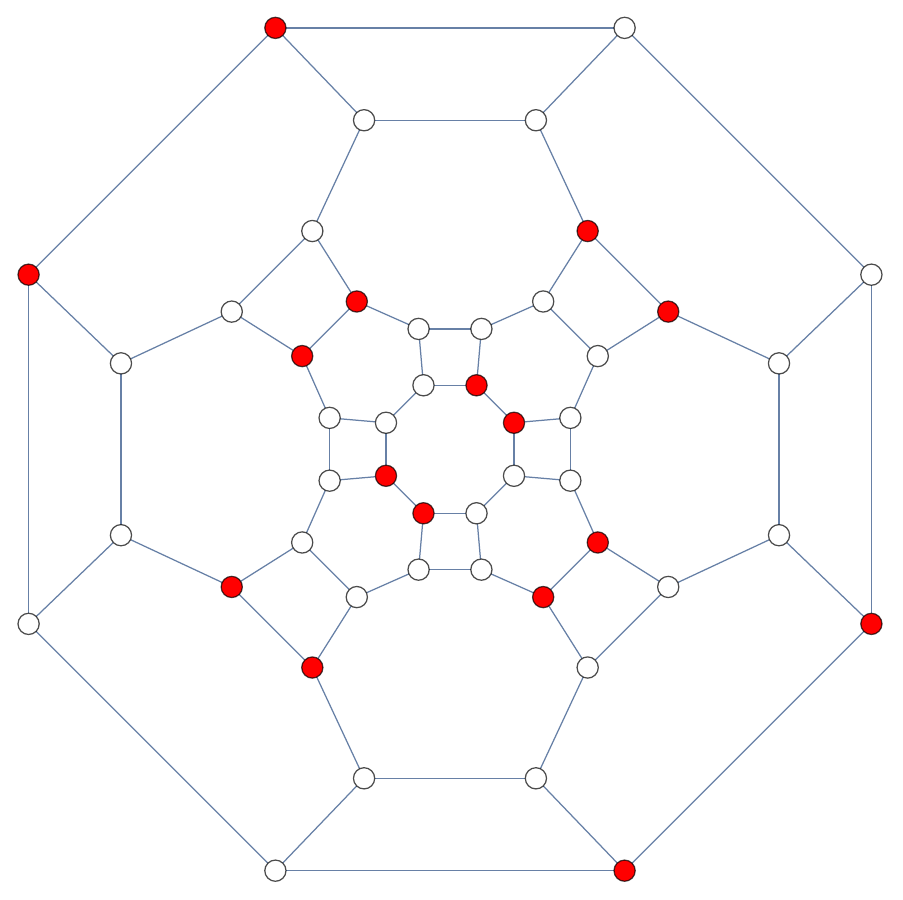}
    \caption{A minimal extremal combinatorial design on the truncated cuboctohedral graph.}
    \label{fig: truncated cubocta design}
\end{figure}

%$p = new Polytope(POINTS=>[[1,0,0,0,1],[1,0,0,1,0],[1,0,1,0,0],[1,1,0,0,0],[1,-1,0,0,-1],[1,0,-1,-1,0]]);   each point occurs *8

The facets of the polytope $P_{\overline{\mathbf{16}}}$ are easy to list and analyze, whereas a brute force enumeration of the circuits of $U_{\mathbf{16}}$ is computationally taxing.  Beginning at $t=8$, it exceeds MATLAB's \cite{MATLAB:2020} preset size restrictions to check whether each $t$-element subset of $[48]$ is a circuit. Even if we knew to look only at 16-element subsets, there are roughly $2.255\cdot 10^{12}$ of them.
\qed
\end{example}

Gale duality provides a simple upper bound on the size of positively weighted designs.  This answers an eigenspace version of  open question \#6 posed in \cite{graphdesigns}.

\begin{theorem}\label{thm:general upper bound} 
Let $G$ be a graph with eigenspaces ordered as 
$\Lambda_1 < \Lambda_2 < \cdots < \Lambda_m$, $\dim \Lambda_i = d_i$, and 
$s_k := \sum_{i=1}^k d_i$. Then 
for every $k=1, \ldots, m-1$ there is a positively weighted $k$-design of size at most $s_k$.
In particular, there is a positively weighted extremal design of size at most $s_{m-1} = n - d_m$. \end{theorem}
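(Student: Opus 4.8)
The plan is to translate the desired existence statement into a statement about positive dependences of the dual configuration $\mathcal{U}_{\mathbf{k}}$ and then invoke Carath\'eodory's theorem. Recall that $U_{\mathbf{k}}$ consists of bases of $\Lambda_2, \ldots, \Lambda_k$, so it has $\sum_{i=2}^{k} d_i = s_k - 1$ rows and $n$ columns; thus $\mathcal{U}_{\mathbf{k}}$ is a configuration of $n$ vectors in $\RR^{s_k - 1}$. By Lemma~\ref{lem:computational check}(2), it suffices to exhibit a nonzero vector $a \in \RR^n$ with $a \ge 0$, $U_{\mathbf{k}} a = 0$, and $|\supp(a)| \le s_k$; then $W := \supp(a)$ is a positively weighted $k$-design of size at most $s_k$. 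Equivalently, by Corollary~\ref{cor:faces and dependencies}, we want a positive dependence of $\mathcal{U}_{\mathbf{k}}$ whose support has at most $s_k$ indices.

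To produce one, observe first that $0$ lies in the convex hull of $\mathcal{U}_{\mathbf{k}}$: each row of $U_{\mathbf{k}}$ is an eigenvector of $AD^{-1}$ with eigenvalue different from $1$, hence orthogonal to $\ones$ by the orthogonality of the eigenbasis (see \eqref{eq:0 average}), so $U_{\mathbf{k}} \ones = 0$ and $0 = \frac{1}{n}\sum_{i=1}^n (u_{\mathbf{k}})_i \in \conv(\mathcal{U}_{\mathbf{k}})$. Carath\'eodory's theorem in $\RR^{s_k-1}$ then gives a subset $S \subseteq [n]$ with $|S| \le (s_k - 1) + 1 = s_k$ and weights $a_i > 0$ for $i \in S$, $\sum_{i \in S} a_i = 1$, such that $\sum_{i \in S} a_i (u_{\mathbf{k}})_i = 0$. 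Extending $a$ by zeros off $S$ yields a nonnegative, nonzero vector in $\ker U_{\mathbf{k}}$ with support $S$; hence, by Lemma~\ref{lem:computational check}(2), $W = S$ is a positively weighted $k$-design with $|W| \le s_k$. For the final claim, take $k = m-1$: then $s_{m-1} = \sum_{i=1}^{m-1} d_i = n - d_m$, so $G$ has a positively weighted extremal design of size at most $n - d_m$.

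It is worth recording the geometric shadow of this argument, which is really what explains it: $P_{\overline{\mathbf{k}}}$ is an $(n-s_k)$-dimensional polytope by Lemma~\ref{lem: basic observations}(1), and since $k \le m-1$ its dimension is $n - s_k \ge d_m \ge 1$, so it has a facet $F$; as $F$ has dimension $n - s_k - 1$ it has at least $n - s_k$ vertices, so at least $n - s_k$ of the columns of $U_{\overline{\mathbf{k}}}$ lie on $F$, and by Theorem~\ref{thm: gale duality and designs} the complementary (at most $s_k$) indices form a minimal positively weighted $k$-design. I would present the Carath\'eodory version in the text, since it avoids having to argue whether every column of $U_{\overline{\mathbf{k}}}$ is a vertex of $P_{\overline{\mathbf{k}}}$. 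There is no serious obstacle here; the only points requiring care are the dimension bookkeeping (the ambient space of $\mathcal{U}_{\mathbf{k}}$ has dimension $s_k - 1$, so Carath\'eodory yields $s_k$, not $s_k+1$) and the trivial but necessary observation that the vector $a$ produced is nonzero, so that $\supp(a)$ is a genuine design in the sense of Lemma~\ref{lem:computational check}(2).
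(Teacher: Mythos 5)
Your proof is correct, but it takes a genuinely different route from the paper's. The paper stays entirely on the Gale-dual side: by Lemma~\ref{lem: basic observations}(1) the eigenpolytope $P_{\overline{\mathbf{k}}}$ has dimension $n-s_k$, so any facet has at least $n-s_k$ distinct vertices, each of which is a column of $U_{\overline{\mathbf{k}}}$, and Theorem~\ref{thm: gale duality and designs} turns the complementary index set, of size at most $s_k$, into a (minimal) positively weighted $k$-design --- essentially the ``geometric shadow'' you sketch at the end. You instead argue on the primal side: $U_{\mathbf{k}}\ones = 0$ because every row is an eigenvector orthogonal to $\ones$, so the origin lies in $\conv(\mathcal{U}_{\mathbf{k}}) \subset \RR^{s_k-1}$, and Carath\'eodory yields a positive dependence supported on at most $(s_k-1)+1 = s_k$ indices, which is a positively weighted $k$-design by Lemma~\ref{lem:computational check}(2); your dimension bookkeeping is right, and nonvanishing of $a$ follows from $\sum_i a_i = 1$. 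The trade-off: your argument is more elementary and self-contained (no facet existence, no polytope combinatorics), but it only certifies \emph{some} design of size at most $s_k$, not one arising as the complement of a facet, i.e.\ not necessarily a minimal design; the paper's one-line facet argument produces a minimal design and keeps the statement inside the Gale framework that Remark~\ref{rem:smallest design largest facet} and the later examples exploit. Incidentally, the worry that motivates your preference is not actually an issue for the paper's route: one never needs every column of $U_{\overline{\mathbf{k}}}$ to be a vertex of $P_{\overline{\mathbf{k}}}$ --- a facet of the $(n-s_k)$-dimensional polytope has at least $n-s_k$ vertices, every vertex is a column, and repeated or non-vertex columns on the facet only shrink the complementary design further, so the bound $|W| \le s_k$ holds regardless.
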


\begin{proof}
The eigenpolytope $P_{\bar{\bf k}}$ has dimension $\sum_{i=k+1}^m d_i = n-s_k$. Therefore any facet has at least $n-s_k$ distinct vertices and at most $n-1$ vertices. By  Theorem~\ref{thm: gale duality and designs}, the size of any minimal positively weighted $k$-design lies between $s_k$ and $1$.
\end{proof} 

\catherine{
\begin{remark}
The bijection between $k$-designs of $G$ and the faces of the 
eigenpolytope $P_{\bar{\bf k}}$ of $G$ provides 
a cursory upper bound on the maximum number of minimal positively weighted designs through the upper bound theorem for convex polytopes \cite{McMullen}. Specifically, if $P_{\bar { \bf k}}$ is $d$-dimensional, then there are at most 
$$ \binom{n - \lfloor \frac{d+1}{2} \rfloor}{n-d} +  \binom{n - \lfloor \frac{d+2}{2} \rfloor}{n-d} $$ minimal positively weighted $k$-graphical designs of $G$.
\end{remark}
}
In practice, there is often a preference for positive weights in quadrature rules. By {\em Caratheodory's theorem}
\cite{Caratheodory}, 
generically, the smallest (arbitrarily) weighted $k$-designs have size $s_k$. Theorem~\ref{thm:general upper bound} shows that for each $k$, there are {\em positively} weighted $k$-designs of size at most $s_k$, illustrating that positive weights are not too restrictive. The short proof is an immediate consequence of the Gale connection.  
There are graphs where the upper bound in Theorem~\ref{thm:general upper bound} is tight for every $k$.

\begin{example}
Consider again the icosahedral graph from Example~\ref{ex: icosa no extremal} 
with $\L_1 < \L_4 < \L_3 < \L_2$. In this 
ordering, $s_1 = 1, s_2 = 6, s_3 = 9$.  Any single vertex is a $1$-design (in any graph). We saw that a minimum cardinality positively weighted 2-design is combinatorial and has $s_2 = 6$ vertices, and a minimum cardinality positively weighted 3-design has $s_3 =9$ vertices.
\end{example}

Theorem \ref{thm:general upper bound} does not provide a general lower bound, as a facet may have $n-1$ vertices. There are indeed single vertex $k$-graphical designs when $k>1$.

\begin{example} \label{ex: Loupekines snark}
Consider the second Loupekines Snark, pictured in Figure \ref{fig: Loupekines Snark}, and label the distinguished red vertex as 1. The eigenspace $\L_2$ of this graph is 2-dimensional and is contained in $e_1^\perp$. Therefore, for $W = \{1\}$ and $\phi \in \L_2$, we have $e_1^\top \phi =0$ which means that $W = \{1\}$ averages $\L_2$. 
\begin{figure}[h]
    \centering
    \includegraphics[scale = .5]{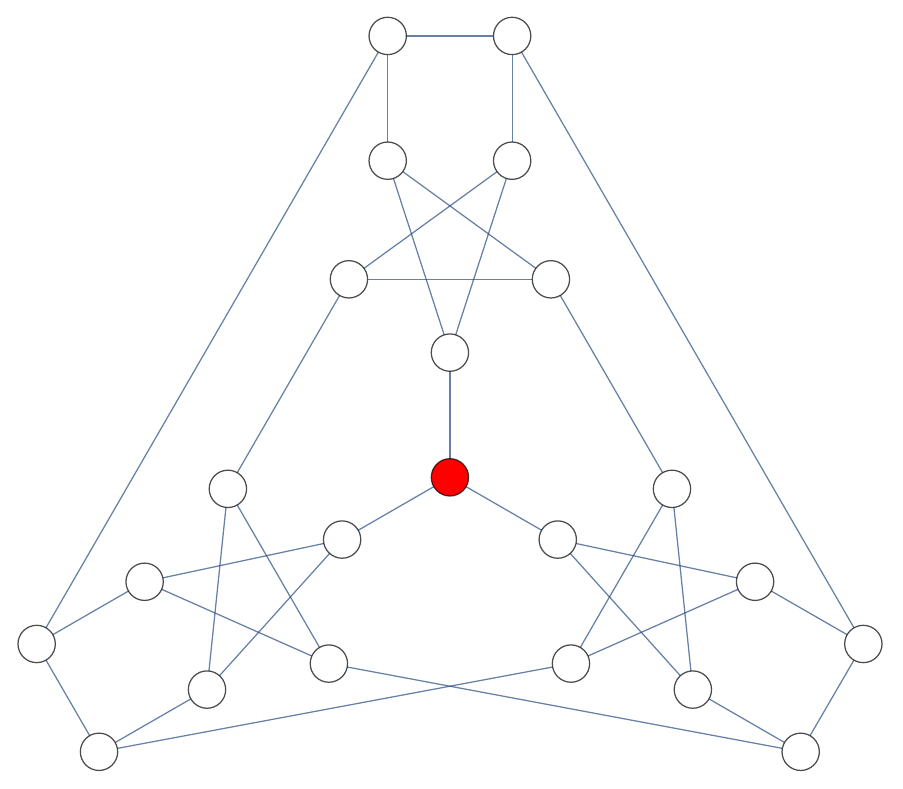}
    \caption{A combinatorial 2-graphical design on the second Loupekines Snark.}
    \label{fig: Loupekines Snark}
\end{figure}
\qed
\end{example}

We close this subsection by commenting on weighted designs that are not positively weighted.  
Lemma~\ref{lem:computational check} (1) says that $W$ is a 
weighted $k$-design of $G$ if and only if 
$W = \supp(a)$ for some dependence $a$ of $U_{\boldsymbol{k}}$ with $\ones^\top a \neq 0$.
Recall that if $U_{\boldsymbol{k}}a = 0$, then $a$ lies in the row space of 
$U_{\overline{\boldsymbol{k}}}$ and hence, $a = v_a^\top U_{\overline{\boldsymbol{k}}}$ for some $v_a$. We can think of $a$ as the vector of values of the linear functional $v_a^\top y$ on the elements of $\mathcal{U}_{\overline{\boldsymbol{k}}}$ (keeping repetitions) which contain among them the vertices of $P_{\overline{\boldsymbol{k}}}$. Let $\cal H_{v_a}$ be the hyperplane 
$${\cal H}_{v_a} := \{ y \in \RR^{n-s_k+1} \,:\, v_a^\top y = 0\}.$$
The weighted $k$-design given by $a$ is $W = \supp(a)$, and $i \in W$ if and only if 
the $i$th element of $\mathcal{U}_{\overline{\boldsymbol{k}}}$ does not lie on ${\cal H}_{v_a}$. 
If $a \geq 0$ then all of $\mathcal{U}_{\overline{\boldsymbol{k}}}$ 
lies in one halfspace of ${\cal H}_{v_a}$ and therefore also, the eigenpolytope $P_{\overline{\boldsymbol{k}}}$. If $a$ is a non-positive dependence, then there are elements
of $\mathcal{U}_{\overline{\boldsymbol{k}}}$ in both open halfspaces of ${\cal H}_{v_a}$ and 
the hyperplane ${\cal H}_{v_a}$ intersects the interior of $P_{\overline{\boldsymbol{k}}}$.
When $a$ is a circuit of $\mathcal{U}_{\boldsymbol{k}}$, the hyperplane ${\cal H}_{v_a}$ is a visualization of the corresponding cocircuit of $\mathcal{U}_{\overline{\boldsymbol{k}}}$.
We illustrate on two graphs.

\begin{example} \label{ex:weighted designs}

Consider again the icosahedral graph from Example~\ref{ex: icosa no extremal} with $\Lambda_1 < \Lambda_4 < \Lambda_3 < \Lambda_2$. By \cite[Theorem 4.3]{Godsil_DistReg}, the eigenpolytope $P_{\l_2}$ is the icosahedron again. Hyperplanes defined by circuits corresponding to minimum cardinality weighted and positively weighted $3$-designs with respect to $\L_1 < \L_4 < \L_3 < \L_2$ are shown in Figure \ref{fig: cutting planes}. The first hyperplane intersects $P_{\l_2}$ in its interior.

\begin{figure}[h!]
\begin{align*}
    & \begin{tikzpicture}[baseline={($ (current bounding box.west) - (0,1ex) $)}, scale = .5]]
  \tikzstyle{bk}=[circle,draw =black, fill=black ,inner sep=1.5pt]
      \tikzstyle{red}=[circle,draw =red, fill=red,inner sep=1.5pt]
      \tikzstyle{pink}=[circle,draw =pink, fill=pink ,inner sep=1.5pt]
\tikzstyle{frontred}=[circle,draw =pink, fill=pink ,inner sep=2pt]
    \tikzstyle{front}=[circle,draw =black, fill=black ,inner sep=2pt]
        \tikzstyle{backred}=[circle,draw =pink, fill=pink,inner sep=1 pt]
  \tikzstyle{gr}=[fill,circle, draw=black,fill=black, inner sep=1 pt]
    \tikzstyle{blue}=[fill,circle, draw=blue!60,fill=blue!60, inner sep=1.5 pt]
    \foreach \y[count=\a] in {10,9,4}
      {\pgfmathtruncatemacro{\kn}{120*\a-90}
       \node at (\kn:3) (b\a) [bk] {} ;}
    \foreach \y[count=\a] in {8,7,2}
      {\pgfmathtruncatemacro{\kn}{120*\a-90}
       \node at (\kn:1.8) (d\a) [front] {};}
    \foreach \y[count=\a] in {1,5,6}
      {\pgfmathtruncatemacro{\jn}{120*\a-30}
       \node at (\jn:1.6) (a\a) [gr]{};}
    \foreach \y[count=\a] in {3,11,12}
      {\pgfmathtruncatemacro{\jn}{120*\a-30}
       \node at (\jn:3) (c\a) [bk] {};}
\node at (120*1 -90:3) (n1) [pink] {} ;
\node at (120*2 -90:3) (n2) [pink] {} ;
\node at (120*1-90:1.8) (n3) [frontred] {} ;
\node at (120*2 -90:1.8) (n4) [frontred] {} ;
\node at (90:1.6) (n5) [backred] {} ;
\node at (90:3) (n6) [red] {} ;
\node at (-90:3) (n7) [blue] {} ;
  \draw (a1)--(a2)--(a3)--(a1);
  \draw[thick] (d1)--(d2)--(d3)--(d1);
  \foreach \a in {1,2,3}
   {\draw (a\a)--(c\a);
   \draw[thick] (d\a)--(b\a);}
   \draw[thick] (c1)--(b1)--(c3)--(b3)--(c2)--(b2)--(c1);
   \draw[thick] (c1)--(d1)--(c3)--(d3)--(c2)--(d2)--(c1);
   \draw (b1)--(a1)--(b2)--(a2)--(b3)--(a3)--(b1);
   \draw[fill = gray,opacity=0.2] (210:1.6) -- (-30:1.6) -- (-30: 3) -- (-90:1.8) -- (210:3) -- cycle;
\end{tikzpicture} 
&& 
\begin{tikzpicture}[baseline={($ (current bounding box.west) - (0,1ex) $)}, scale = .5]
    \tikzstyle{front}=[circle,draw =black, fill=black ,inner sep=2pt]
    \tikzstyle{strongest}=[circle,draw =red, fill=red ,inner sep=1pt]
    \tikzstyle{medium}=[circle,draw =red!50, fill=red!50 ,inner sep=1.5pt]
    \tikzstyle{weakest}=[circle,draw =red!20, fill=red!20 ,inner sep=1.5pt]
      \foreach \y[count=\a] in {10,9,4}
      {\pgfmathtruncatemacro{\kn}{120*\a-90}
       \node at (\kn:3) (b\a) [medium] {} ;}
    \foreach \y[count=\a] in {8,7,2}
      {\pgfmathtruncatemacro{\kn}{120*\a-90}
       \node at (\kn:1.8) (d\a) [front] {};}
    \foreach \y[count=\a] in {1,5,6}
      {\pgfmathtruncatemacro{\jn}{120*\a-30}
       \node at (\jn:1.6) (a\a) [strongest]{};}
    \foreach \y[count=\a] in {3,11,12}
      {\pgfmathtruncatemacro{\jn}{120*\a-30}
       \node at (\jn:3) (c\a) [weakest] {};}
  \draw (a1)--(a2)--(a3)--(a1);
  \draw[thick] (d1)--(d2)--(d3)--(d1);
  \foreach \a in {1,2,3}
   {\draw(a\a)--(c\a);
   \draw[thick] (d\a)--(b\a);}
   \draw[thick] (c1)--(b1)--(c3)--(b3)--(c2)--(b2)--(c1);
   \draw[thick] (c1)--(d1)--(c3)--(d3)--(c2)--(d2)--(c1);
   \draw (b1)--(a1)--(b2)--(a2)--(b3)--(a3)--(b1);
    \fill[gray,opacity=0.2] (30:1.8) -- (150:1.8) -- (-90:1.8) -- cycle;
  \end{tikzpicture}
  &&&
  \begin{tikzpicture}[baseline={($ (current bounding box.west) - (0,1ex) $)}, scale = .45]
\tikzstyle{bk}=[circle, fill = black,inner sep= 2 pt,draw, thick]
\tikzstyle{red}=[circle, fill = red,inner sep= 2 pt,draw=red, thick]
\tikzstyle{blue}=[circle, fill = blue,inner sep= 2 pt,draw = blue, thick]
%%nodes
\node (top) at (.7,3.5) [red] {};
\node (bottom) at (.7,-3.2) [blue] {};
\node (v1) at (3.5,.5) [bk] {};
\node (v2) at (-2,0)  [bk] {};
\node (v3) at (2,0)  [bk] {};
\node (v4) at (-.5,.5) [bk] {};
%%edges
\draw[thick] (v1) --(top) -- (v2) -- (bottom) -- (v3) -- (top) -- (v4) -- (bottom) -- (v1) -- (v3) -- (v2) -- (v4) -- (v1);
\fill[gray,opacity=.2] (3.5,.5) -- (2,0)  -- (-2,0) --(-.5,.5) -- cycle;
\end{tikzpicture}
\end{align*}
    \caption{Hyperplanes corresponding to circuits.}
    \label{fig: cutting planes}
\end{figure}
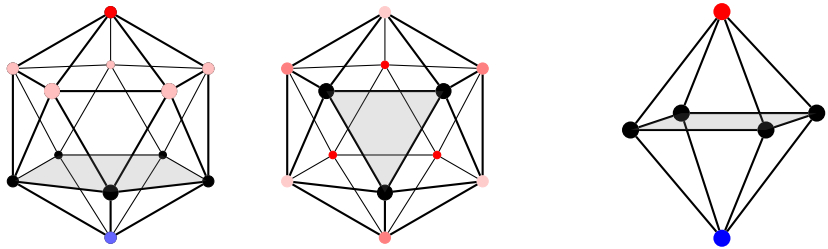

One has to be careful when dealing with non-positively weighted designs since in that case, a dependence $a$ gives rise to a graphical design if and only if $\ones^\top a \neq 0$.  Consider the graph of the regular octahedron in Figure~\ref{fig: cutting planes}. By \cite[Theorem 4.3]{Godsil_DistReg} again, the eigenspace of $\l  = 0$ is an octahedron. From  Example~\ref{ex:octahedron}, $a=(0,0,0,0,1,-1)$ is a circuit and the corresponding hyperplane bisects the octahedron as shown. However $\ones^\top a = 0$, and so this circuit $a$ fails to average $\Lambda_1$.

\qed
\end{example}

\subsection{Eigenpolytope Literature} \label{subsec: Eigenpolytope Lit}
Eigenpolytopes of a graph $G$ with respect to single eigenvalues were defined by Godsil in \cite{Godsil_GGP}. He used the symmetries of eigenpolytopes to understand $\textup{Aut}(G)$, the automorphism group of $G$.  Our eigenpolytopes in Definition~\ref{def:our eigenpolytopes} are more general in that they involve sets of eigenvalues, and using them to study graphical designs is new. 

Most of the existing work on eigenpolytopes focuses on the second largest eigenvalue of the adjacency matrix $A$, which they term $\theta_1$.  The eigenpolytope $P_{\theta_1}$ is often closely tied to the structure and symmetry of $G$;  most notably for distance regular graphs \cite{Godsil_DistReg, Powers}. We also refer the reader to further work on symmetry and families of graphs in \cite{ChanGodsil, RooneyThesis,PowersPetersen}. Padrol and Pfeifle \cite{PadrolPfeifle} translate graph operations to operations on eigenpolytopes arising from the graph Laplacian $D-A$. They mention in passing that eigenpolytopes might connect to oriented matroids and Gale duality. For regular graphs, the eigenspaces of $A$, $AD^{-1}$, $D-A$, and many other common graph matrices are equivalent.

The $n$ columns of the matrix $U_\lambda$ whose rows form a basis for the eigenspace of $\lambda$ 
can be thought of as an embedding of the $n$ vertices of $G$. In some situations the 
edge graph of the eigenpolytope $P_\lambda = \conv(\mathcal{U}_\lambda)$ is isomorphic to $G$ 
and provides a {\em spectral drawing} of $G$, see \cite{WinterPaper,WinterThesis,GodsilGeomOfDistReg,PowersLicata} for more on this connection. McConnell in \cite{McConnell} computes an extensive list of spectral graph drawings of the graphs of uniform polyhedra. Eigenpolytopes corresponding to multiple eigenvalues are, in spirit, analogous to spectral drawings using eigenvectors of multiple eigenvalues.

%$p = new Polytope(POINTS=>[[1,-1,-1,-1,-1,-1],[1,1,0,0,0,0],[1,0,1,0,0,0],[1,0,0,1,0,0],[1,0,0,0,1,0],[1,0,0,0,0,1]]);
% P_4 each point occurs *2 

%$p = new Polytope(POINTS=>[[1,-.6,-1,-.6],[1,.6,1,.6],[1,.6,-.6,-1],[1,-.6,.6,1],[1,-1,0,0],[1,-1,.6,-.6],[1,1,-.6,.6],[1,1,0,0],[1,0,0,-1],[1,0,-1,0],[1,0,1,0],[1,0,0,1]]);    
% P_2 

% $p = new Polytope(POINTS=>[[1,1.6180,-1,1.6180,-1,-1,-1,-1,-1],[1,-1.6180,1,-1.6180,-1,-1,-1,-1,-1,],[1,-1.6180,1.6180,-1,1,0,0,0,0],[1,1.6180,-1.6180,1,1,0,0,0,0],[1,-1,0,0,0,0,1,0,0],[1,-1,1.6180,-1.6180,0,1,0,0,0],[1,1,-1.6180,1.6180,0,1,0,0,0],[1,1,0,0,0,0,1,0,0],[1,0,0,-1,0,0,0,0,1],[1,0,-1,0,0,0,0,1,0],[1,0,1,0,0,0,0,1,0],[1,0,0,1,0,0,0,0,1]]); 
% P_24

\subsection{Connections to Extremal Combinatorics} \label{subsec:Golubev}
The extremal designs found by Golubev in \cite{Golubev} provide classes of faces of certain eigenpolytopes.  We rephrase his results in our language. 
We first recall the {\em Hoffman bound},\catherine{ which does not actually appear in the standard reference \cite{Hoffman}. The origins and generalizations of this theorem are explained in \cite{haemer}. If $G$ is strongly regular, the linear programming bound of \cite{DelsarteThesis} is the Hoffman bound.}

\begin{theorem}(Hoffman Bound \cite{haemer})
Let $G$ be a regular graph on $n$ vertices, let $\l_{\min}$ be the least eigenvalue of $ A D^{-1}$, and let $\a(G)$ be the size of a maximum stable set of $G$. 
Then,
$$\frac{\a(G)}{n} \leq \frac{-\l_{\min}}{1-\l_{\min}}.$$
\end{theorem}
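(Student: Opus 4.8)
The plan is to run the classical ``ratio bound'' argument, exploiting that regularity of $G$ makes $AD^{-1} = (1/\d)A$ symmetric with $\ones$ an eigenvector for eigenvalue $1$. Fix a maximum stable set $S \subseteq [n]$ and write $\a := \a(G) = |S|$. Consider the indicator vector $\ones_S$ and decompose it along the orthogonal splitting $\RR^n = \spanset\{\ones\} \oplus \ones^\perp$ as $\ones_S = \tfrac{\a}{n}\ones + z$ with $z \perp \ones$; the coefficient is forced since $\tfrac{1}{n}\ones^\top\ones_S = \tfrac{\a}{n}$, and Pythagoras gives $\|z\|^2 = \|\ones_S\|^2 - \tfrac{\a^2}{n^2}\|\ones\|^2 = \a - \tfrac{\a^2}{n}$.

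The crux is to evaluate the quadratic form $q := \ones_S^\top (AD^{-1})\,\ones_S$ in two ways. Since $S$ is stable, $G$ has no edge inside $S$, so $\ones_S^\top A\,\ones_S = 0$ and hence $q = 0$. On the other hand, symmetry of $AD^{-1}$ together with $AD^{-1}\ones = \ones$ kills the cross term, $\ones^\top (AD^{-1}) z = (AD^{-1}\ones)^\top z = \ones^\top z = 0$, so $q = \tfrac{\a^2}{n^2}\,\ones^\top (AD^{-1})\ones + z^\top (AD^{-1}) z = \tfrac{\a^2}{n} + z^\top (AD^{-1}) z$. The Rayleigh inequality $v^\top (AD^{-1}) v \ge \l_{\min}\|v\|^2$ applied to $v = z$, combined with $q = 0$, yields $0 \ge \tfrac{\a^2}{n} + \l_{\min}\bigl(\a - \tfrac{\a^2}{n}\bigr)$.

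Rearranging then finishes the proof. Dividing through by $\a > 0$ gives $\tfrac{\a}{n}(1-\l_{\min}) + \l_{\min} \le 0$. One then notes that whenever $G$ has an edge, $\l_{\min} < 0$: the trace of $AD^{-1}$ is $0$ and $1$ is an eigenvalue, so some eigenvalue must be negative. Hence $1 - \l_{\min} > 0$, dividing preserves the direction of the inequality, and we obtain $\tfrac{\a(G)}{n} \le \tfrac{-\l_{\min}}{1-\l_{\min}}$; the edgeless case is degenerate (there $AD^{-1}$ is undefined) and may be set aside.

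I expect no genuine obstacle; the two places to be careful are the vanishing of the cross term --- precisely where regularity of $G$ enters, via symmetry of $AD^{-1}$ and $\ones$ being an eigenvector --- and the sign bookkeeping on $\l_{\min}$ so that the last division does not flip the inequality. If desired, the usual addendum that equality forces $z$ to lie in the $\l_{\min}$-eigenspace of $AD^{-1}$ drops out of the equality case of the Rayleigh bound, but this refinement is not needed for the stated inequality.
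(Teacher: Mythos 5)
Your argument is correct: the decomposition $\ones_S = \tfrac{\a}{n}\ones + z$ with $z \perp \ones$, the vanishing of $\ones_S^\top (AD^{-1})\ones_S$ on a stable set, the cancellation of the cross terms via symmetry of $AD^{-1}$ and $AD^{-1}\ones = \ones$ (exactly where regularity is used), and the Rayleigh bound on $z$ give $0 \ge \tfrac{\a}{n}(1-\l_{\min}) + \l_{\min}$, from which the stated inequality follows since $\l_{\min} < 0 < 1$. Note that the paper does not prove this theorem at all --- it is quoted from the literature (with the remark that it is not actually in Hoffman's paper) --- so there is no in-paper proof to compare against; what you have written is the standard ratio-bound argument and is a valid self-contained proof of the cited statement, with the sign and degenerate-case bookkeeping handled correctly.
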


The following theorem is a translation of \cite[Theorem 2.2]{Golubev} to eigenpolytopes.

\begin{theorem}
Let $G$ be a regular graph for which the Hoffman bound is sharp.  Then, a maximum stable set and its complement each provide a face of $P_{\l_{\min}}$.
\end{theorem}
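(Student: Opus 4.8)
The plan is to route through the equality case of the Hoffman bound: sharpness pins down the spectral decomposition of the indicator vector of a maximum stable set $S$, and this produces a single linear functional that exhibits both $S$ and its complement as faces of $P_{\lambda_{\min}}$.

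First I would establish the equality characterization. Let $\phi_1=\ones,\phi_2,\dots,\phi_n$ be an orthogonal eigenbasis of $AD^{-1}$ with eigenvalues $\theta_1=1,\theta_2,\dots,\theta_n$, let $S$ be a maximum stable set, and set $\alpha:=|S|$. Expanding $\ones_S=\sum_i c_i\phi_i$ in the associated orthonormal basis gives $c_1=\alpha/\sqrt n$ and $\sum_{i\ge2}c_i^2=\alpha-\alpha^2/n$, while stability of $S$ forces $\ones_S^\top A\,\ones_S=0$, i.e.\ $\sum_i\theta_i c_i^2=0$. Since $\theta_i\ge\lambda_{\min}$ for all $i$,
\[
0=\frac{\alpha^2}{n}+\sum_{i\ge2}\theta_i c_i^2\ \ge\ \frac{\alpha^2}{n}+\lambda_{\min}\Bigl(\alpha-\frac{\alpha^2}{n}\Bigr),
\]
which rearranges to the Hoffman bound $\alpha/n\le-\lambda_{\min}/(1-\lambda_{\min})$. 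When the bound is sharp the middle inequality is an equality, forcing $c_i=0$ for every $i\ge2$ with $\theta_i\ne\lambda_{\min}$; hence $\ones_S\in\operatorname{span}\{\ones\}\oplus\Lambda_{\lambda_{\min}}$, and projecting onto $\operatorname{span}\{\ones\}$ identifies the first summand as $(\alpha/n)\ones$, so
\[
v:=\ones_S-\tfrac{\alpha}{n}\,\ones\ \in\ \Lambda_{\lambda_{\min}}.
\]

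Then I would read off the faces. Let the rows of $U_{\lambda_{\min}}$ be a basis of $\Lambda_{\lambda_{\min}}$ with columns $u_1,\dots,u_n$, so $P_{\lambda_{\min}}=\conv\{u_1,\dots,u_n\}$ as in Definition~\ref{def:Godsil eigenpolytope}. Since $v$ lies in the row space of $U_{\lambda_{\min}}$, write $v=U_{\lambda_{\min}}^\top w$; then $w^\top u_i=v_i$ for each $i$, so from the previous paragraph $w^\top u_i=1-\alpha/n$ for $i\in S$ and $w^\top u_i=-\alpha/n$ for $i\notin S$. These two values are distinct, and since a sharp Hoffman bound forces $0<\alpha<n$ both index sets are nonempty; hence $y\mapsto w^\top y$ attains its maximum over $P_{\lambda_{\min}}$ exactly on $\conv\{u_i:i\in S\}$ and its minimum exactly on $\conv\{u_i:i\in[n]\setminus S\}$, so these are opposite proper faces of $P_{\lambda_{\min}}$. (If $P_{\lambda_{\min}}$ is not full-dimensional one simply reads the face definition relative to its affine hull; the functional $w$ is still a valid certificate.)

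The only substantive step is the equality analysis of the Hoffman bound in the second paragraph; everything after it is a one-line supporting-hyperplane argument, so I do not expect a real obstacle. Two remarks: $\lambda_{\min}$ must be the least eigenvalue of $AD^{-1}$, so that $P_{\lambda_{\min}}$ is the eigenpolytope of Definition~\ref{def:Godsil eigenpolytope} rather than that of $A$; and, as a sanity check, $\ones_S\in\operatorname{span}\{\ones\}\oplus\Lambda_{\lambda_{\min}}$ says precisely that, with $\lambda_{\min}$ ordered last, $S$ --- and, by Lemma~\ref{lem: comb complements}, also $[n]\setminus S$ --- is a combinatorial extremal design, so the statement is the eigenpolytope-$P_{\lambda_{\min}}$ counterpart of the design/face correspondence of Theorem~\ref{thm: gale duality and designs}.
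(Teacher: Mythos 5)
Your proof is correct, but it takes a genuinely different route from the paper. The paper's proof is essentially a two-line citation argument: with $\Lambda_{\lambda_{\min}}$ ordered last, it invokes Golubev's Theorem 2.2 (together with Theorem 5.6 of the cubes-and-codes paper) to assert that a maximum stable set attaining the Hoffman bound is an extremal \emph{combinatorial} design, applies Lemma~\ref{lem: comb complements} to get the same for the complement, and then reads off both faces from the Gale duality correspondence of Theorem~\ref{thm: gale duality and designs}. You instead reprove the cited ingredient from scratch --- the equality analysis of the Hoffman bound via the spectral expansion of $\ones_S$, yielding $\ones_S-\tfrac{\alpha}{n}\ones\in\Lambda_{\lambda_{\min}}$, which is exactly the statement that $S$ is a combinatorial extremal design --- and then bypass the Gale duality theorem with a direct supporting-hyperplane certificate: the functional $w$ with $v=U_{\lambda_{\min}}^\top w$ takes only the two values $1-\alpha/n$ and $-\alpha/n$ on the columns, so its max- and min-faces are $\conv\{u_i:i\in S\}$ and $\conv\{u_i:i\notin S\}$. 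What your version buys is self-containedness, an explicit linear functional, and the slightly finer conclusion that the two faces are opposite faces of a single hyperplane (this is precisely the cocircuit-hyperplane picture the paper sketches after Example~\ref{ex:weighted designs}); what the paper's version buys is brevity and the conceptual point of Section~\ref{subsec:Golubev}, namely that Golubev's extremal-combinatorics results are literally instances of the design--face dictionary. Two tiny remarks: your parenthetical about $P_{\lambda_{\min}}$ possibly failing to be full-dimensional is unnecessary (the columns of $U_{\lambda_{\min}}$ span $\RR^{\dim\Lambda_{\lambda_{\min}}}$ and sum to zero, so the polytope is full-dimensional), and $\alpha\geq 1$ holds for any nonempty graph, so nonemptiness of $S$ does not really need sharpness; neither point affects correctness.
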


\begin{proof}
Let $G= (V,E)$ have the eigenspace of $\l_{\min}$ ordered last.  By \cite[Theorem 2.2]{Golubev} and \cite[Theorem 5.6]{cubescodes}, a maximum stable set $W\subseteq V$ is an extremal combinatorial design, and $V\setminus W$ is an extremal combinatorial design as well by Lemma \ref{lem: comb complements}. By Gale duality, it follows that $W$ and $V\setminus W$ index faces of $P_{\l_{\min}}$.
\end{proof}

A {\em cut} in a connected 
graph $G = (V,E)$ is a partition of the 
vertex set $V$ into a set $W \subset V$ and its complement. An edge $ij \in E$ is a {\em cut edge} in the cut induced by $W$ if one endpoint is in $W$ and the other in $V\setminus W$. We use $E(W, V \setminus W)$ to denote the set of cut edges in the cut induced by $W$. 
The second main result of \cite{Golubev} relies on the following variant of the Cheeger bound.

\begin{theorem}(\cite{AM,Tanner})
\emph{Let $G$ be a connected $\d$-regular graph and $\theta_1$ be the second largest eigenvalue of $ A D^{-1}$. Then} 
$$ \underset{\vn \neq W \subsetneq V}{\min}\frac{|V| |E(W, V\setminus W)|}{\d|W||V\setminus W|} \geq 1-\theta_1 .$$ 
\end{theorem}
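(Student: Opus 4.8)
The plan is to run the standard Rayleigh-quotient argument that yields the ``easy'' half of the Cheeger-type inequality. Write $n = |V|$ and let $M := A D^{-1} = (1/\d) A$; since $G$ is $\d$-regular, $M$ is symmetric, its largest eigenvalue is $1$ with eigenspace $\spanset\{\ones\}$ (here we use connectedness, so that $1$ is a simple eigenvalue), and $\theta_1$ is its second largest eigenvalue. The algebraic identity underpinning everything is $I - M = (1/\d)(D - A)$, so for every $x \in \RR^V$,
$$x^\top (I - M) x = \frac{1}{\d}\, x^\top (D-A) x = \frac{1}{\d} \sum_{ij \in E} (x_i - x_j)^2.$$

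Next I would fix a set $W$ with $\vn \neq W \subsetneq V$ and substitute the test vector $x := \ones_W - \tfrac{|W|}{n}\,\ones$, which is nonzero and orthogonal to $\ones$. Two short computations are needed. First, $x_i - x_j = (\ones_W)_i - (\ones_W)_j$, which is $\pm 1$ precisely when $ij \in E(W, V\setminus W)$ and $0$ otherwise, so the identity above gives $x^\top(I-M)x = (1/\d)\,|E(W,V\setminus W)|$. Second, summing the coordinates over $i \in W$ and over $i \in V \setminus W$ separately,
$$x^\top x = |W|\Big(1 - \tfrac{|W|}{n}\Big)^{2} + |V \setminus W|\Big(\tfrac{|W|}{n}\Big)^{2} = \frac{|W|\,|V\setminus W|}{n}.$$

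Finally I would invoke the Courant--Fischer characterization of the second eigenvalue of the symmetric matrix $M$: $x^\top M x \le \theta_1\, x^\top x$ for every $x \perp \ones$, equivalently $1 - \theta_1 \le x^\top(I-M)x \,/\, x^\top x$. Substituting the two computations above (the denominator $x^\top x$ is positive since $W$ is nonempty and proper) gives $1 - \theta_1 \le n\,|E(W,V\setminus W)| \,/\, \big(\d\,|W|\,|V\setminus W|\big)$, and since $W$ was arbitrary, taking the minimum over all $\vn \neq W \subsetneq V$ proves the bound. I do not expect a genuine obstacle here: this is the elementary direction of Cheeger's inequality. The only points needing care are that $x \neq 0$ so the Rayleigh quotient is defined, and that $\d$-regularity is exactly what makes $M$ symmetric, so that the min--max principle applies and $\ones$ spans the top eigenspace.
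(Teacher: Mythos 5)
Your proof is correct: the test vector $x = \ones_W - \tfrac{|W|}{n}\ones$ is orthogonal to $\ones$, the two computations of $x^\top(I-M)x$ and $x^\top x$ are right, and connectedness justifies applying the Rayleigh-quotient bound $x^\top M x \le \theta_1\, x^\top x$ on $\ones^\perp$. The paper does not prove this statement at all---it is quoted from \cite{AM,Tanner}---and your argument is exactly the standard one behind that citation, so there is nothing to reconcile.
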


\begin{theorem}
Let $G$ be a graph for which the Cheeger bound is sharp.  Then, a set which realizes the Cheeger bound and its complement each provide a face of $P_{\theta_1}$.
\end{theorem}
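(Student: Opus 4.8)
The plan is to follow the proof of the preceding (Hoffman) theorem essentially line for line, with the second‑largest eigenvalue $\theta_1$ of $AD^{-1}$ in place of $\l_{\min}$ and Golubev's Cheeger‑sharpness result in place of his Hoffman‑sharpness result. First I would fix an eigenspace ordering of $G=(V,E)$ in which $\L_1=\spanset\{\ones\}$ is first and the eigenspace $\L_{\theta_1}$ is ordered last; this is legitimate because extremal designs are defined relative to an arbitrary ordering (and $G$ has at least two distinct eigenvalues, else there is nothing to prove). In such an ordering, an extremal design is precisely a subset that averages every eigenspace except $\L_{\theta_1}$.

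Next I would invoke the second main result of \cite{Golubev}, together with \cite[Theorem 5.6]{cubescodes} to pass from Golubev's eigenvalue formulation to the eigenspace formulation that accommodates multiplicities: when the Cheeger bound is sharp, a subset $W$ attaining the minimum of the Cheeger ratio satisfies $\ones_W\in\spanset\{\ones\}\oplus\L_{\theta_1}$, which by Lemma~\ref{lem:computational check}(3) is exactly the condition for $W$ to be a combinatorial design averaging $\L_2,\dots,\L_{m-1}$, i.e.\ a combinatorial extremal design in the chosen ordering. By Lemma~\ref{lem: comb complements}, $V\setminus W$ is a combinatorial extremal design as well; note also that the Cheeger ratio is symmetric under $W\mapsto V\setminus W$, so $V\setminus W$ is itself a realizer of the bound, consistent with the statement.

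Finally I would feed this into Gale duality. A combinatorial design is a positively weighted design (weights $1/|W|$), so Theorem~\ref{thm: gale duality and designs} applies with $k=m-1$ and $\overline{\mathbf k}=\{\lambda_1,\theta_1\}$, giving that the convex hull of the columns of $U_{\overline{\mathbf k}}$ indexed by $[n]\setminus W$, and likewise the one indexed by $[n]\setminus(V\setminus W)=W$, is a face of $P_{\overline{\mathbf k}}$. Since the first row of $U_{\overline{\mathbf k}}$ is $\ones$, the polytope $P_{\overline{\mathbf k}}$ lies on the hyperplane $x_1=1$ and deleting the first coordinate is an affine isomorphism onto $\conv(\mathcal U_{\theta_1})=P_{\theta_1}$ that carries faces to faces; hence $W$ and $V\setminus W$ index faces of $P_{\theta_1}$. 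The one genuinely nontrivial step is the middle one — making sure Golubev's Cheeger result really yields a combinatorial design averaging exactly the complement of $\L_{\theta_1}$ rather than something weaker or stated only for a simple eigenvalue $\theta_1$; this is precisely the role of \cite[Theorem 5.6]{cubescodes}, and once that reduction is in place the rest is a formal consequence of Gale duality, identical in form to the Hoffman case.
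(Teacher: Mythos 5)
Your proof is correct and takes essentially the same route as the paper: order the eigenspace of $\theta_1$ last, combine Golubev's Cheeger result with the eigenspace reformulation from \cite{cubescodes} (the paper cites \cite[Theorem 5.11]{cubescodes} here rather than Theorem 5.6, which it uses for the Hoffman case) to get a combinatorial extremal design, pass to the complement via Lemma~\ref{lem: comb complements}, and conclude by Gale duality through Theorem~\ref{thm: gale duality and designs}. Your explicit identification of $P_{\overline{\mathbf k}}$ with $P_{\theta_1}$ by deleting the first coordinate on the hyperplane $x_1=1$ just makes precise a step the paper leaves implicit.
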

\begin{proof}
Let $G= (V,E)$ have the eigenspace of $\theta_1$ ordered last.  By \cite[Theorem 2.4]{Golubev} and \cite[Theorem 5.11]{cubescodes}, a set $W\subset V$ realizing the Cheeger bound is an extremal combinatorial design, and hence so is $V\setminus W$ by Lemma \ref{lem: comb complements}. By Gale duality, it follows that $W$ and $V\setminus W$ index faces of $P_{\theta_1}$.
\end{proof}

\section{Cocktail Party Graphs and Cycles}
\label{sec:cross-polytopes and cycles}

We now use the power of Gale duality via Theorem~\ref{thm: gale duality and designs} to investigate three families of graphs. 
Since they are all Cayley graphs, we begin with general results about their spectrum 
which is closely tied to the representation theory of finite groups (\cite{SerreBook},\cite[Chapter 1]{Sagan}). 

Given a group $H$ and a generating set $S \subseteq H$  such that $s\in S \implies s^{-1} \in S$, one can construct the connected, $|S|$-regular  Cayley graph $\Gamma(H,S)$. The vertices are indexed by $H$, and there is an edge between group elements $g$ and $h$ if $g = sh$ for some $s\in S$. The eigenvectors of $\Gamma(H,S)$ are given by the group characters of $H$. When $H$ is a finite abelian group, this provides a simple method to compute the spectrum of $\Gamma(H,S)$ since there are $|H|$ one-dimensional representations of $H$. The eigenvalue of an eigenvector $\phi$ (a group character of $H$) can be computed from:
$$(AD^{-1} \phi)(g) = \frac{1}{|S|} \sum_{h \sim g}  \phi(hg)  = \left( \frac{1}{|S|} \sum_{h \sim g}  \phi(h) \right)\phi(g) .$$
The eigenvectors depend only on the group $H$, but the eigenvalues, and hence groupings of eigenvectors into eigenspaces, depends on the generating set $S$ as well. 

\subsection{The Cocktail Party Graph}

We denote the regular {\em cross-polytope} in dimension $d$ by $\Diamond_d = \conv\{\pm\, e_i : i \in [d]\}$. Its edge graph is commonly known as the {\em cocktail party graph} and is a regular 
graph with $2d$ vertices and degree $2(d-1)$. This graph can also be defined as the {\em complete multipartite graph} $K_{2,\ldots,2}$, the Cayley graph $\Gamma(\ZZ_{2d}, [2d-2])$, and the complement of $d$ disjoint copies of the path $P_2$ with two vertices.  In frequency order, the spectrum of the cocktail party graph is 
$$1^{(1)},\left(\frac{-1}{d-1}\right)^{(d-1)}, 0^{(d)};$$ 
see \cite{BrouwerHaemers} for a reference. It is quick to compute the matrix $U$; 
we label columns by the vertices of $\Diamond_d $, and the row blocks by eigenvalues: 

\begin{center}
\begin{tabular}{c||c c c c c c c }
& & & & vertex\\
 & $e_1$  &  $-e_1$ &    $\cdots$   &     $\cdots$ &    $\cdots$    &    $e_d$ & $-e_d $  \\
\hline \hline
$\l_1 = 1$ &&&&$\ones^\top$ &&\\
    \hline
 $ \l_2 = -(d-1)^{-1}$ &  $e_1$  &  $e_1$ &   $\cdots$    &    $e_{d-1}$ & $e_{d-1}  $ & $-\ones$ &     $-\ones$\\
\hline
 $ \l_3 = 0$ & $e_1$  &  $-e_1$ &    $\cdots$   &     $\cdots$ &    $\cdots$    &    $e_d$ & $-e_d $  
\end{tabular}
\end{center}

\begin{theorem} In frequency order, the graph of $\Diamond_d $ has $2^d$ minimal positively weighted extremal designs, each of which consists of $d$ vertices and corresponds to a facet of $\Diamond_d$. Each of these designs is combinatorial.
\end{theorem}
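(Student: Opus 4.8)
The plan is to recognize the relevant eigenpolytope as the cross-polytope itself and then transcribe its (well known) facial structure back to designs via Theorem~\ref{thm: gale duality and designs}. Since the cocktail party graph has $m=3$ eigenspaces, an extremal design is a $2$-design, so $\mathbf{2}=\{\lambda_2\}$ and $\overline{\mathbf{2}}=\{\lambda_1,\lambda_3\}$, i.e.\ the eigenvalues $1$ and $0$. Reading off the displayed matrix $U$: the rows of $U_{\overline{\mathbf{2}}}$ are $\ones^\top$ together with the $\lambda_3=0$ block, so the column of $U_{\overline{\mathbf{2}}}$ indexed by the graph vertex $\pm e_i$ is $(1;\pm e_i)\in\RR^{d+1}$. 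Hence $\mathcal{U}_{\overline{\mathbf{2}}}=\{(1;v):v\in\Vert(\Diamond_d)\}$ and $P_{\overline{\mathbf{2}}}=\conv(\mathcal{U}_{\overline{\mathbf{2}}})$ is a copy of the cross-polytope $\Diamond_d$ lying in the hyperplane $x_1=1$ --- a pleasant self-reference, since the cocktail party graph is the edge graph of $\Diamond_d$. The first thing to record is that all $2d$ columns of $U_{\overline{\mathbf{2}}}$ are distinct and are genuine vertices of $P_{\overline{\mathbf{2}}}$.

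Next I would invoke the classical description of the facets of $\Diamond_d$: there are exactly $2^d$ of them, indexed by sign vectors $\epsilon\in\{-1,+1\}^d$, with $F_\epsilon:=\conv\{\epsilon_i e_i:i\in[d]\}$ cut out by the valid inequality $\sum_i \epsilon_i x_i\le 1$; each $F_\epsilon$ is a $(d-1)$-simplex with exactly $d$ vertices, and distinct $\epsilon$ give distinct vertex sets. Combining this with Theorem~\ref{thm: gale duality and designs} and Remark~\ref{rem:smallest design largest facet} (every column is a vertex), the minimal positively weighted extremal designs of the cocktail party graph are exactly the complements of the vertex sets of the facets of $P_{\overline{\mathbf{2}}}$. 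So there are $2^d$ of them, one per $\epsilon$: the design associated to $F_\epsilon$ is $W_\epsilon=\{-\epsilon_i e_i:i\in[d]\}$, of cardinality $2d-d=d$; distinct facets give distinct complements, so these $2^d$ designs are distinct.

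It then remains to show each $W_\epsilon$ is combinatorial, which I would verify directly via Lemma~\ref{lem:computational check}(3): check $\phi^\top\ones_{W_\epsilon}=0$ for all $\phi\in\Lambda_2$. The displayed $U$ gives the explicit basis $\phi_j=\ones_{\{e_j,-e_j\}}-\ones_{\{e_d,-e_d\}}$, $j\in[d-1]$, of $\Lambda_2$, and since $W_\epsilon$ meets each antipodal pair $\{e_i,-e_i\}$ in exactly one vertex, $\phi_j^\top\ones_{W_\epsilon}=|W_\epsilon\cap\{e_j,-e_j\}|-|W_\epsilon\cap\{e_d,-e_d\}|=1-1=0$; equivalently $\ones_{W_\epsilon}\in\ker U_{\mathbf{2}}$, so $W_\epsilon$ is a combinatorial $2$-design. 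The argument is essentially bookkeeping once $P_{\overline{\mathbf{2}}}\cong\Diamond_d$ is observed; the only step needing slight care is confirming that every column of $U_{\overline{\mathbf{2}}}$ is a vertex (not merely a point of the hull), since that is what forces every minimal design to have cardinality exactly $d$ and not fewer.
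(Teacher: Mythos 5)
Your proof is correct and follows essentially the same route as the paper: identify $P_{\overline{\mathbf{2}}}$ with the cross-polytope $\Diamond_d$, apply Theorem~\ref{thm: gale duality and designs} to its $2^d$ simplicial facets to get the $2^d$ complementary designs of size $d$, and check combinatoriality against the $\Lambda_2$ basis. The only difference is cosmetic: you verify $P_{\overline{\mathbf{2}}}\cong\Diamond_d$ directly from the displayed eigenbasis and spell out the $0/1$ weight check via Lemma~\ref{lem:computational check}(3), whereas the paper cites Godsil's theorem for the eigenpolytope identification and leaves the weight check as a quick observation.
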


\begin{proof} 
By \cite[Theorem 4.3]{Godsil_DistReg}, the eigenpolytope for $\l = 0$ is again the cross-polytope $\Diamond_d$, and hence the extremal eigenpolytope $P_{\overline{\boldsymbol{2}}}$ is isomorphic to $\Diamond_d$. The cross-polytope is simplicial, which is to say that every facet is a $d$-simplex $\Delta_{d-1}$, which has exactly $d$ vertices.  Moreover, a subset of $d$ vertices is a facet of $\Diamond_d$ if and only if its complement is also a facet. 
Hence, by Theorem~\ref{thm: gale duality and designs},  the minimal positively weighted extremal designs of the cocktail party graph consist of $2d-d = d$ vertices, and correspond to the $2^d$ facets of $\Diamond_d$. Examining the eigenspace $\L_2$, it is quick to see that the weight vector for each such design is a 0-1 vector, hence these are combinatorial designs.
\end{proof}

\begin{theorem} \label{thm:not frequency order}
With $\Lambda_2$ ordered last, the graph of $\Diamond_d $ has $d$ minimal positively weighted extremal designs, each of which consists of $2$ antipodal vertices of $\Diamond_d$ and is combinatorial. 
\end{theorem}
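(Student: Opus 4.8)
The plan is to apply Theorem~\ref{thm: gale duality and designs} with the eigenspace ordering $\L_1 < \L_3 < \L_2$, i.e.\ with $\Lambda_2$ (the eigenspace of $\lambda = -(d-1)^{-1}$) placed last. In this ordering the extremal designs are $2$-designs: we need to average $\L_1 = \spanset\{\ones\}$ and $\L_3$ (the eigenspace of $\lambda = 0$). So $\mathbf{2} = \{\lambda_3\}$ (the middle eigenvalue, with multiplicity $d$) and $\overline{\mathbf{2}} = \{\lambda_1, \lambda_2\}$. First I would write down $U_{\overline{\mathbf 2}}$ explicitly from the matrix $U$ displayed just before the theorem: its first row is $\ones^\top$ and its remaining $d-1$ rows are the block labeled $\lambda_2$ above, whose columns are $e_1, e_1, e_2, e_2, \ldots, e_{d-1}, e_{d-1}, -\ones, -\ones$ in $\RR^{d-1}$. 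Thus $\mathcal{U}_{\overline{\mathbf 2}} \subset \RR^d$ is a configuration of $2d$ vectors lying on the hyperplane $x_1 = 1$, in which the two columns indexed by the antipodal pair $\{e_i, -e_i\}$ of $\Diamond_d$ coincide.

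Next I would identify the eigenpolytope $P_{\overline{\mathbf 2}} = \conv(\mathcal{U}_{\overline{\mathbf 2}})$. Because the columns come in $d$ coincident pairs, $P_{\overline{\mathbf 2}}$ has at most $d$ distinct vertices, and from the explicit coordinates one sees it is (affinely) the simplex with vertices $e_1, \ldots, e_{d-1}, -\ones$ in $\RR^{d-1}$, i.e.\ $P_{\overline{\mathbf 2}} \simeq \Delta_{d-1}$, a $(d-1)$-simplex. Its facets are obtained by deleting a single vertex; since each vertex of $\Delta_{d-1}$ is the image of exactly one antipodal pair of vertices of $\Diamond_d$, each facet of $P_{\overline{\mathbf 2}}$ is indexed by the $2d-2$ columns of $U_{\overline{\mathbf 2}}$ that survive after removing one antipodal pair. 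By Theorem~\ref{thm: gale duality and designs}, $W \subseteq [2d]$ is a minimal positively weighted extremal design if and only if $[2d]\setminus W$ indexes a facet of $P_{\overline{\mathbf 2}}$; hence $W$ is exactly an antipodal pair $\{e_i, -e_i\}$, of which there are $d$. This gives both the count ($d$) and the structure (two antipodal vertices).

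Finally, to see these designs are combinatorial I would check, using Lemma~\ref{lem:computational check}(3), that $\ones_W$ is orthogonal to every vector in $\L_3$ when $W = \{e_i, -e_i\}$: the columns of the $\lambda_3$-block at the two coordinates of an antipodal pair are $e_i$ and $-e_i$, which sum to zero, so the circuit certifying the design is $e_{i} + e_{\bar i}$, a $0/1$ vector, and hence the uniform weight $1/2$ works. I expect no serious obstacle here; the only point requiring a little care is verifying that $\conv\{e_1, \ldots, e_{d-1}, -\ones\}$ is genuinely a simplex (affine independence of these $d$ points, equivalently that the $(d-1)\times d$ matrix $[e_1\ \cdots\ e_{d-1}\ -\ones]$ together with a row of ones has rank $d$), which is a one-line linear-algebra check, and confirming that no two of these $d$ vertices coincide — i.e.\ that the coincidences among the columns of $U_{\overline{\mathbf 2}}$ are exactly the antipodal pairs and nothing more.
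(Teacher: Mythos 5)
Your proposal is correct and follows essentially the same route as the paper: identify the extremal eigenpolytope for the ordering $\L_1 < \L_3 < \L_2$ as a $(d-1)$-simplex whose vertices are the collapsed antipodal pairs, apply Theorem~\ref{thm: gale duality and designs} to read off the $d$ antipodal-pair designs from facet complements, and verify combinatoriality via the $\L_3$ eigenbasis giving a $0/1$ circuit. Your write-up just spells out a few details (affine independence, no extra column coincidences) that the paper leaves implicit.
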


\begin{proof}
The eigenpolytope $P_{\l_2}$ is $\Delta_{d-1}$ with the antipodal vertices of $\Diamond_d$ 
collapsed into a single vertex of $\Delta_{d-1}$. The complements of facets of $\Delta_{d-1}$ are exactly the vertices of $\Delta_{d-1}$.  By Theorem~\ref{thm: gale duality and designs}, 
there are $d$ minimal positively weighted extremal designs of $\Diamond_d$ in this ordering of eigenspaces, each of which is a pair of antipodal vertices. Examining the eigenspace $\L_3$, it is clear that the weight vector for each such design is of the form $e_i + e_{i+1}\in \RR^{2d}$, hence these are combinatorial designs.
\end{proof}

Allowing arbitrary weights, the minimal extremal designs 
of the cocktail party graph are combinatorial, and in particular, positively weighted. 

\begin{theorem} \label{thm: crosspolytope only pos weights}
Every minimal weighted extremal design of the cocktail party graph is combinatorial. 
\end{theorem}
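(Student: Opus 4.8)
The plan is to exploit that $\Diamond_d$ has only three eigenspaces: since $\Lambda_1=\spanset\{\ones\}$ is ordered first, the only choices are the frequency order $\Lambda_1<\Lambda_2<\Lambda_3$ and the order $\Lambda_1<\Lambda_3<\Lambda_2$ with $\Lambda_2$ last, and I would treat these two cases separately. In either ordering an extremal design averages $\Lambda_1$ and exactly one further eigenspace $\Lambda$; by Lemma~\ref{lem:computational check}(1) it is exactly a set $W=\supp(a)$ with $a\neq 0$, $\ones^\top a\neq 0$, and $Ua=0$, where the rows of $U$ are a basis of $\Lambda$ (so $U=U_{\{\lambda_2\}}$ in the frequency order and $U=U_{\{\lambda_3\}}$ when $\Lambda_2$ is last). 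Thus everything reduces to describing $\ker U_{\{\lambda_2\}}$ and $\ker U_{\{\lambda_3\}}$ from the matrix displayed above and intersecting with the constraint $\ones^\top a\neq 0$.

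For the frequency order the relevant block is $U_{\{\lambda_2\}}$, whose column for each of $e_i,-e_i$ with $i\le d-1$ is the standard basis vector $e_i\in\RR^{d-1}$ and whose column for each of $e_d,-e_d$ is $-\ones$. A short computation shows that $U_{\{\lambda_2\}}a=0$ is equivalent to $a_{e_i}+a_{-e_i}$ having a common value $t$ for all $i\in[d]$; then $\ones^\top a=dt$, so $\ones^\top a\neq0$ forces $t\neq0$, whence every antipodal pair meets $\supp(a)$. Consequently every weighted extremal design contains a transversal $W$ of the $d$ antipodal pairs, and any such transversal is already a combinatorial extremal design since $\ones_W\in\ker U_{\{\lambda_2\}}$ with $\ones^\top\ones_W=d\neq0$. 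A minimal weighted extremal design must therefore equal such a transversal, and on the support of a transversal the kernel condition leaves no freedom: one of $a_{e_i},a_{-e_i}$ is zero and the other equals $t$, so $a=t\,\ones_W$, and scaling gives the $0/1$ weight vector $\ones_W$. Hence $W$ is combinatorial. (In passing this reproves the enumeration in the frequency-order theorem above for arbitrary weights.)

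For the order with $\Lambda_2$ last the relevant block is $U_{\{\lambda_3\}}$, whose column for $e_i$ is $e_i\in\RR^d$ and whose column for $-e_i$ is $-e_i$. Now $U_{\{\lambda_3\}}a=0$ amounts to $a_{e_i}=a_{-e_i}$ for all $i$, so $\supp(a)$ is a union of antipodal pairs, and $\ones^\top a=2\sum_i a_{e_i}\neq0$ forces at least one pair $\{e_j,-e_j\}$ into $\supp(a)$. Each such pair is already a combinatorial extremal design ($\ones_{\{e_j,-e_j\}}\in\ker U_{\{\lambda_3\}}$ with $\ones^\top\ones_{\{e_j,-e_j\}}=2\neq0$), so a minimal weighted extremal design is exactly a single antipodal pair, on whose support the kernel condition forces $a$ to be a scalar multiple of $\ones_W$. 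Hence $W$ is combinatorial, recovering Theorem~\ref{thm:not frequency order}.

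I do not anticipate a serious obstacle; the entire force of the statement is the rigidity of $\ker U_{\{\lambda_2\}}$ and $\ker U_{\{\lambda_3\}}$. The one point that needs care is the passage from ``$W$ is a minimal weighted extremal design'' to ``$W$ is a transversal (resp.\ a single antipodal pair)'': one must observe that a weighted design strictly containing such a set is never minimal (the contained transversal or pair is again a weighted design), and that once the support is that small the kernel equations pin down the weights up to a scalar, forcing a positive multiple of a $0/1$ vector. Equivalently, one is checking that every circuit of $U_{\{\lambda_2\}}$ (resp.\ $U_{\{\lambda_3\}}$) is, up to sign and scaling, the indicator of a transversal (resp.\ an antipodal pair), which also displays the result as a clean instance of Corollary~\ref{cor:faces and dependencies} and Theorem~\ref{thm: gale duality and designs}.
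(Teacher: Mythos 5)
Your proposal is correct and takes essentially the same route as the paper: both arguments rest on the explicit eigenbasis of the cocktail party graph and the rigidity of the kernels (equivalently, circuits) of the $\Lambda_2$- and $\Lambda_3$-blocks in the two possible orderings, and you in fact spell out the minimality step (a weighted design meeting every antipodal pair either is a transversal, resp.\ a single pair, or properly contains one and so is not minimal) and the forced $0/1$ weights more explicitly than the published proof. One small caveat: your closing parenthetical overstates the circuit classification, since not every circuit of the $\Lambda_2$-block is a scaled transversal indicator --- the antipodal differences with $+1$ on $e_j$ and $-1$ on $-e_j$ are circuits as well, with $\ones^\top a = 0$ --- but that remark is not used in your argument, which correctly restricts attention to kernel vectors with $\ones^\top a \neq 0$.
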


\begin{proof}
In frequency order, it follows from the structure of the eigenbasis of $\Lambda_2$ that if we allow arbitrary weights, then any circuit of $U_2$ (up to sign) is of the form $a \in \RR^{V(\Diamond_d)}$ with 
$a_{e_{j}} = 1$, $a_{-e_{j}}=-1$ for some $j$ and $0$ in all other coordinates. 
Therefore  $\ones^\top a = 0$, and $\supp (a)$ is not a graphical design by 
Lemma~\ref{lem:computational check} (1). This generalizes the observation in Example~\ref{ex:weighted designs}. For the other ordering, the hyperplanes $\cal H$ representing cocircuits of $\mathcal{U}_2$ are precisely the spans of the facets of the eigenpolytope $P_{\lambda_2} = \Delta_d$ which gives combinatorial designs as in Theorem~\ref{thm:not frequency order}.
\end{proof}

\subsection{Cycles}

We next consider the cycle graph $C_n= \Gamma(\ZZ_n, \{\pm 1\})$. Let 
\[ \phi_j :=\begin{bmatrix}
1\\
\cos(2\pi j/n)\\
\cos(2\pi 2j/n)\\
\vdots \\
\cos(2\pi (n-1)j/n)
\end{bmatrix}, 
\psi_j :=
\begin{bmatrix}
0\\
\sin(2\pi j/n)\\
\sin(2\pi 2j/n)\\
\vdots \\
\sin(2\pi (n-1)j/n)
\end{bmatrix}.\]
For each $ j \in [\lfloor (n-1)/2 \rfloor]$, $\phi_j$ and $\psi_j$ span a two-dimensional eigenspace $\L_j$ of $C_n$ with eigenvalue $\l_j = \cos(2 \pi j/n)$.  If $n$ is even, then $\phi_{n/2}$ and $\psi_{n/2}$ collapse into the one-dimensional eigenspace $\L_{n/2} = \spanset \{ ((-1)^t): t \in [n]\}$. Therefore, all eigenpolytopes of $C_n$ corresponding to a single eigenvalue (other than $\l_1=1$) are either a polygon or a line segment. We note that the above indexing of the eigenspaces is not compatible with frequency ordering.  In frequency order, the highest frequency eigenspace(s) will correspond to $j \approx n/4$, since $\cos(2 \pi j/ n) \approx \cos(\pi /2)=0$. We consider minimal extremal designs of $C_n$ in frequency order and will see that their 
structure depends on the congruence class of $n \mod 4$. 

\begin{theorem} \label{thm: 0 mod 4 cycles}
Let $n \equiv 0 \mod 4.$ In frequency ordering, $C_n$ has four minimal positively weighted extremal designs, each consisting of $n/2$ vertices. They are 
\newline $\{ i, i+1 \in [n]: i \equiv j \mod 4\} $ for $j \in [4]$.
 \end{theorem}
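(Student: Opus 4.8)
The plan is to apply Theorem~\ref{thm: gale duality and designs}: the minimal positively weighted extremal designs of $C_n$ correspond to the facets of the eigenpolytope $P_{\overline{\mathbf{m-1}}}$, where $\overline{\mathbf{m-1}} = \{\l_1, \l_{\text{last}}\}$ consists of $\l_1 = 1$ together with the highest-frequency eigenvalue(s). So the first step is to identify, for $n \equiv 0 \bmod 4$, exactly which eigenspace is last in frequency order. Since $\l_j = \cos(2\pi j/n)$ and $n \equiv 0 \bmod 4$, the value $j = n/4$ gives $\l_{n/4} = \cos(\pi/2) = 0$, and this is the unique eigenvalue of smallest absolute value; the corresponding eigenspace $\L_{n/4}$ is two-dimensional, spanned by $\phi_{n/4}$ and $\psi_{n/4}$. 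I would record these two vectors explicitly: $\phi_{n/4}$ has entries $\cos(\pi t/2)$ for $t = 0,\ldots,n-1$, which cycles through $1,0,-1,0,1,0,-1,0,\ldots$, and $\psi_{n/4}$ has entries $\sin(\pi t/2)$, cycling through $0,1,0,-1,0,1,0,-1,\ldots$.

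**Second**, I would write down the matrix $U_{\overline{\mathbf{m-1}}} \in \RR^{3 \times n}$ whose rows are $\ones^\top$, $\phi_{n/4}^\top$, and $\psi_{n/4}^\top$, and observe that its $n$ columns take only four distinct values in $\RR^3$, namely $(1,1,0)$, $(1,0,1)$, $(1,-1,0)$, $(1,0,-1)$, repeated according to $t \bmod 4$. Hence $P_{\overline{\mathbf{m-1}}} = \conv(\mathcal{U}_{\overline{\mathbf{m-1}}})$ is the convex hull of these four points lying in the plane $x_1 = 1$ of $\RR^3$, i.e.\ a quadrilateral (a square) with four facets (edges). By Remark~\ref{rem:smallest design largest facet} and Theorem~\ref{thm: gale duality and designs}, the minimal positively weighted extremal designs are the complements (in $[n]$) of the vertex sets of these edges. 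Each edge joins two adjacent vertices of the square, say the classes $t \equiv 0$ and $t \equiv 1 \bmod 4$; its complement is the set of vertices with $t \equiv 2$ or $t \equiv 3 \bmod 4$, which is precisely $\{i, i+1 : i \equiv 2 \bmod 4\}$. Running over the four edges of the square gives the four stated designs $\{i, i+1 \in [n] : i \equiv j \bmod 4\}$ for $j \in [4]$, each of size $n/2$.

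**Third**, a small check on which pairs of the four points are adjacent: the four points $(\pm 1, 0)$ and $(0, \pm 1)$ (dropping the constant first coordinate) are the vertices of a square, and its edges join $(1,0)$ to $(0,1)$, $(0,1)$ to $(-1,0)$, $(-1,0)$ to $(0,-1)$, $(0,-1)$ to $(1,0)$ — i.e.\ consecutive residue classes mod $4$, never the two ``diagonal'' (antipodal) pairs. This confirms that the facets pair up consecutive classes and that the complements have the claimed form; since all four points are vertices and no three are collinear, there are exactly four facets and hence exactly four minimal designs. One should note $n \geq 4$ so the square is genuinely two-dimensional (for $n = 4$ the four points are still distinct).

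**The main obstacle** is the frequency-order bookkeeping: one must be careful that for $n \equiv 0 \bmod 4$ the eigenvalue $0$ is attained only by the single eigenspace $\L_{n/4}$ and that no other eigenvalue ties it in absolute value, so that ``last in frequency order'' is unambiguous and two-dimensional. This is where the congruence class matters — for other residues mod $4$ either $0$ is not an eigenvalue or the last eigenspace has a different dimension, which is why the other cases are treated separately. Everything after the identification of the last eigenspace is a short computation with the explicit cosine/sine eigenvectors and the elementary fact that four points in convex position form a quadrilateral with four edges.
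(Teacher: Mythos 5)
Your proposal is correct and follows essentially the same route as the paper: identify $\L_{n/4}$ (eigenvalue $0$) as the unambiguous last eigenspace, observe that the columns of the relevant matrix take only the four values giving the square $\Diamond_2$ with $n/4$ labels per vertex, and read off the four minimal designs as complements of the four facets via Theorem~\ref{thm: gale duality and designs}. The only differences are cosmetic refinements (you carry the $\ones$ row explicitly and verify which residue classes are adjacent on the square), details the paper's proof asserts without elaboration.
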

 
 \begin{proof}
 If $n \equiv 0 \mod 4$, then the extremal eigenspace $\L_{n/4}$ with eigenvalue $0$ is spanned by
 \[ \phi_{n/4} =\begin{bmatrix}
1 \\
\cos(\pi /2)\\
\cos(\pi)\\
\vdots \\
\cos(\pi (n-1)/2)
\end{bmatrix} 
\,\,\,\, \textup{ and } \,\,\,\,
\psi_{n/4}=
\begin{bmatrix}
0\\
\sin(\pi /2)\\
\sin(\pi)\\
\vdots \\
\sin(\pi(n-1) /2)
\end{bmatrix}.\]
Each element of $\cal U_{n/4}$ is one of the following:
\[ \begin{bmatrix} 
1 \\
0
\end{bmatrix},
\begin{bmatrix} 
0\\ 
1
\end{bmatrix},
\begin{bmatrix} 
-1\\ 
0
\end{bmatrix},
\begin{bmatrix} 
0 \\ 
-1
\end{bmatrix}.\]
Thus $P_{n/4}$ is the diamond $\Diamond_2$ with $n/4$ graph vertices indexing each polytope vertex. Each facet is indexed by the vertices $\{ i, i+1 \in [n]: i \equiv j \mod 4\} $, and each facet is also the complement of a facet, Thus there are $4$ minimal positively weighted extremal designs of the stated form.
 \end{proof}

 \begin{corollary}  \label{cor: 0 mod 4 cycles only positive}
Every minimal extremal design of $C_n$, for $n \equiv 0 \mod 4$, is positively weighted. 
\end{corollary}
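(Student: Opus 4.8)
The plan is to upgrade Theorem~\ref{thm: 0 mod 4 cycles} from positively weighted to arbitrarily weighted designs by showing that the four designs found there are the \emph{only} inclusion-minimal weighted extremal designs of $C_n$; since each of them carries an all-positive (indeed $0/1$) weight vector, the corollary follows at once.

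First I would describe $\ker U_{\mathbf k}$ explicitly. With $\Lambda_{n/4}$ (eigenvalue $0$) ordered last in frequency order, $\overline{\mathbf k}=\{\lambda_1,\lambda_{n/4}\}$, so by orthogonality $\ker U_{\mathbf k}$ is the row space of $U_{\overline{\mathbf k}}$, i.e.\ $\spanset\{\ones,\phi_{n/4},\psi_{n/4}\}$. A short computation with the explicit formulas for $\phi_{n/4},\psi_{n/4}$ shows that this $3$-dimensional space is exactly the set of vectors that are constant on each of the four residue classes of the vertex set modulo $4$ and whose class-values $(p,q,r,s)$ satisfy the single relation $p+r=q+s$; moreover $\ones^\top a=\frac{n}{4}(p+q+r+s)=\frac{n}{2}(p+r)$. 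By Lemma~\ref{lem:computational check}(1), a subset $W$ is then a weighted extremal design precisely when $W=\supp(a)$ for such an $a$ with $p+r\neq 0$.

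Since such an $a$ is constant on residue classes, $\supp(a)$ is a union of residue classes, indexed by the set $T\subseteq\{1,2,3,4\}$ on which $(p,q,r,s)$ is nonzero, and containment of supports corresponds to containment of the $T$'s; hence $W=\supp(a)$ is inclusion-minimal iff $T$ is inclusion-minimal among index sets realizable as the nonzero-pattern of some $(p,q,r,s)$ with $p+r=q+s\neq 0$. A bounded enumeration over the subsets of $\{1,2,3,4\}$ then finishes the argument: no singleton is realizable (it forces $p+r=q+s=0$); the two opposite pairs $\{1,3\},\{2,4\}$ force $p+r=0$ and are ruled out; each of the four consecutive pairs is realizable (e.g.\ $(1,1,0,0)$ realizes $\{1,2\}$) and is minimal; and every index set of size $\geq 3$ contains a consecutive pair. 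Thus the minimal weighted extremal designs are exactly the sets $\{i,i+1\in[n]:\ i\equiv j\bmod 4\}$, $j\in[4]$, of Theorem~\ref{thm: 0 mod 4 cycles}, each with witnessing weight vector proportional to its $0/1$ indicator, hence positively weighted. The only point requiring care is the structural reduction in the previous paragraph --- that lying in $\ker U_{\mathbf k}$ forces constancy on residue classes and collapses to the single condition $p+r=q+s$ --- which is what makes the classification finite; equivalently, in class coordinates the positive circuits of $\mathcal{U}_{\mathbf k}$ are the four consecutive pairs (matching the four facets of the square $P_{\overline{\mathbf k}}\cong\Diamond_2$ appearing in the proof of Theorem~\ref{thm: 0 mod 4 cycles}), while the remaining two circuits violate $\ones^\top a\neq 0$ and so index no design.
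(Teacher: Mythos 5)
Your proof is correct and is essentially the paper's argument: the paper notes that the extremal eigenpolytope is $\Diamond_2$ and, as in Theorem~\ref{thm: crosspolytope only pos weights}, that the only non-positive circuits (the ``diagonal'' ones, your opposite residue pairs) have $\ones^\top a = 0$ and hence yield no design, so minimal weighted designs come only from facets. You simply unpack this by explicitly parameterizing $\ker U_{\mathbf k}$ in class coordinates and enumerating supports, which reproduces the same classification rather than citing the earlier theorem.
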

 
 \begin{proof}
 The extremal eigenpolytope of $C_n$ is $\Diamond_2$.  By Theorem \ref{thm: crosspolytope only pos weights}, the only cocircuit hyperplanes $\cal H$ that yield designs are the 
 those that support facets of $\Diamond_2$. 
 \end{proof}

 \begin{figure}[h]
    \begin{tabular}{c c c c} 
    \rotatebox{67.5}{\begin{tikzpicture}[ scale = .4]
        \tikzstyle{bk}=[circle, draw = black, fill = white ,inner sep=2.5pt]
        \tikzstyle{red}=[circle, draw = red, fill = red ,inner sep=2.5pt]
    \foreach \y[count=\a] in {1,2,3,4,5,6,7,8}
      {\pgfmathtruncatemacro{\kn}{45*\a - 45}
       \node at (\kn:3) (b\a) [bk] {} ;}
       \node at (0:3) (b1) [red] {} ;
       \node at (45:3) (b2) [red] {} ;
       \node at (180:3) (b5) [red] {} ;
       \node at (225:3) (b6) [red] {} ;
  \draw (b1)--(b2)--(b3)--(b4)--(b5)--(b6) --(b7)--(b8)--(b1);
\end{tikzpicture} }
& 
   \rotatebox{30}{ \begin{tikzpicture}[ scale = .4]
        \tikzstyle{bk}=[circle, draw = black, fill = white ,inner sep=2.5pt]
        \tikzstyle{red}=[circle, draw = red, fill = red ,inner sep=2.5pt]
        \tikzstyle{light}=[circle,draw =red!20, fill=red!20 ,inner sep=2.5pt]
        \tikzstyle{med}=[circle,draw =red!50, fill=red!50 ,inner sep=2.5pt]
         \tikzstyle{medplus}=[circle,draw =red!60, fill=red!60 ,inner sep=2.5pt]
       \node at (0:3) (b9) [light] {} ;
       \node at (40:3) (b1) [med] {} ;
       \node at (80:3) (b2) [med] {} ;
       \node at (120:3) (b3) [light] {} ;
       \node at (160:3) (b4) [bk] {} ;
       \node at (200:3) (b5) [medplus] {} ;
       \node at (240:3) (b6) [red] {} ;
       \node at (280:3) (b7) [medplus] {} ;
       \node at (320:3) (b8) [bk] {} ;
  \draw (b1)--(b2)--(b3)--(b4)--(b5)--(b6)--(b7)--(b8)--(b9)--(b1);
\end{tikzpicture} }
& 
    \rotatebox{54}{\begin{tikzpicture}[ scale = .4]
  \tikzstyle{bk}=[circle, draw = black, fill = white ,inner sep=2.5pt]
        \tikzstyle{red}=[circle, draw = red, fill = red ,inner sep=2.5pt]
        \tikzstyle{light}=[circle,draw =red!50, fill=red!50 ,inner sep=2.5pt]
       \node at (0:3) (b10) [light] {} ;
       \node at (36:3) (b1) [red] {} ;
       \node at (72:3) (b2) [light] {} ;
       \node at (108:3) (b3) [bk] {} ;
       \node at (144:3) (b4) [bk] {} ;
       \node at (180:3) (b5) [light] {} ;
       \node at (216:3) (b6) [red] {} ;
       \node at (252:3) (b7) [light] {} ;
       \node at (288:3) (b8) [bk] {} ;
       \node at (324:3) (b9) [bk] {} ;
  \draw (b1)--(b2)--(b3)--(b4)--(b5)--(b6)--(b7)--(b8)--(b9)--(b10)--(b1);
\end{tikzpicture} }
\end{tabular}
\vspace{-.2 in}
    \caption{Minimal positively weighted extremal designs of $C_8,C_9,C_{10}$.}
    \label{fig: extremal cycles }
\end{figure}
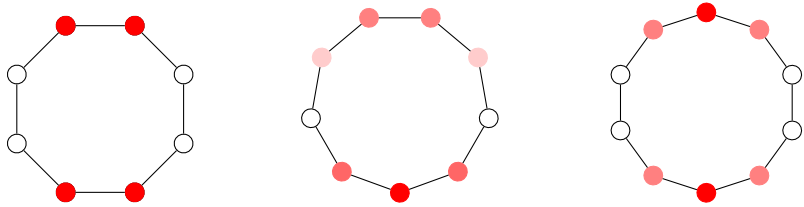
For the remaining types of cycles, we need some short gcd calculations. 
\begin{lemma} \label{lem: gcd} We have the following calculations.
\begin{enumerate}
    \item If $n \equiv 1 \mod 4$, then $\gcd(n, (n - 1)/4) = 1$, $\gcd(2n, (n - 1)/2) = 2$, and $\gcd(2n, (n + 1)/2) = 1$. 
    \item If  $n \equiv 3 \mod 4$, then $\gcd(n, (n+1)/4) = 1$, $\gcd(2n, (n + 1)/2) = 2$, and $\gcd(2n, (n - 1)/2) = 1$. 
\end{enumerate}
\end{lemma}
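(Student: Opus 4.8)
The plan is to prove each gcd identity directly using the elementary fact that $\gcd(a,b) = \gcd(a - qb, b)$ for any integer $q$, together with parity observations. All six statements are of the form $\gcd(N, k) = g$ where $N \in \{n, 2n\}$ and $k$ is one of $(n\pm 1)/4$ or $(n\pm 1)/2$; in each case I would first reduce $N$ modulo $k$ (or modulo a small multiple of $k$) to shrink the first argument to a constant, then read off the answer.

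First, suppose $n \equiv 1 \bmod 4$, so $(n-1)/4$ is an integer. For $\gcd(n,(n-1)/4)$: write $n = 4 \cdot \tfrac{n-1}{4} + 1$, so $\gcd(n, \tfrac{n-1}{4}) = \gcd(1, \tfrac{n-1}{4}) = 1$. For $\gcd(2n, (n-1)/2)$: note $(n-1)/2$ is even since $n \equiv 1 \bmod 4$, and $2n = 4 \cdot \tfrac{n-1}{2} + 2$, so $\gcd(2n, \tfrac{n-1}{2}) = \gcd(2, \tfrac{n-1}{2}) = 2$, using that $\tfrac{n-1}{2}$ is even. For $\gcd(2n, (n+1)/2)$: here $(n+1)/2$ is odd since $n \equiv 1 \bmod 4$, and $2n = 4 \cdot \tfrac{n+1}{2} - 2$, so $\gcd(2n, \tfrac{n+1}{2}) = \gcd(2, \tfrac{n+1}{2}) = 1$ because $\tfrac{n+1}{2}$ is odd. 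This proves part (1). Part (2) is entirely symmetric: for $n \equiv 3 \bmod 4$ one has $(n+1)/4 \in \ZZ$ with $n = 4 \cdot \tfrac{n+1}{4} - 1$ giving the first identity; $(n+1)/2$ is even and $2n = 4\cdot\tfrac{n+1}{2} - 2$ giving gcd $2$; and $(n-1)/2$ is odd with $2n = 4\cdot\tfrac{n-1}{2} + 2$ giving gcd $1$. One should double-check in each case that the relevant quantity is genuinely an integer and has the claimed parity, which follows immediately from the congruence class of $n$ modulo $8$ (e.g.\ $n \equiv 1 \bmod 4$ splits into $n \equiv 1$ or $5 \bmod 8$, and in both $(n-1)/2$ is even while $(n+1)/2$ is odd).

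I do not expect a genuine obstacle here; the only point requiring mild care is tracking parities so that the final $\gcd(2, \cdot)$ evaluations come out correctly, and making sure the reductions $2n = 4k \pm 2$ are set up with the correct sign. The whole argument is a short sequence of one-line Euclidean-algorithm steps.
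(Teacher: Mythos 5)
Your proof is correct and is essentially the same argument as the paper's: the paper substitutes $n = 4k\pm 1$ and runs the same Euclidean-style reductions with the same parity observations on $(n\pm 1)/2$, whereas you phrase the reductions directly in terms of $n$. (The detour through congruence classes mod $8$ is unnecessary -- $n \equiv 1 \bmod 4$ already fixes the parities of $(n\pm1)/2$ -- but it does no harm.)
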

 
\begin{proof}
\begin{enumerate}
    \item Let $n = 4k+1$. Then, $k = (n-1)/4$, so
\begin{align*}
    \gcd(n, (n - 1)/4)  =   \gcd(k , 4k+1 ) =   \gcd(k , 1)  =1.
\end{align*}
Since $(n+1)/2 = 2k+1$ is odd,
\begin{align*}
    \gcd(2n, (n+1)/2)  =   \gcd(8k+2 , 2k+1 ) =    \gcd(2k+1,2k-1) = \gcd(2 , 2k-1)  =1.
\end{align*}
Similarly,  $(n-1)/2 = 2k$ is even, so
\begin{align*}
    \gcd(2n, (n-1)/2)  =   \gcd(8k+2 , 2k ) =    \gcd(2,2k) =2.
\end{align*}

\item This follows similarly by writing $n = 4k-1$.
\end{enumerate}
\end{proof}

 \begin{theorem}  \label{thm: odd cycles}
 If $n$ is odd, then every minimal positively weighted extremal design of $C_n$ 
 consists of $n-2$ vertices. 
 \end{theorem}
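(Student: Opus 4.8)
The plan is to reduce the problem to understanding the extremal eigenpolytope $P_{\overline{\mathbf k}}$ of $C_n$ when $n$ is odd, and then to show that every facet of this polytope has exactly two vertices, i.e. that the polytope is a $1$-dimensional segment. Write $n = 2\ell+1$. In frequency order the highest-frequency eigenspace of $C_n$ is the two-dimensional $\Lambda_j$ with $j$ chosen so that $\cos(2\pi j/n)$ is closest to $0$; by Lemma~\ref{lem: gcd} this is $j=(n-1)/4$ when $n\equiv1\bmod4$ and $j=(n+1)/4$ when $n\equiv3\bmod4$. In either case the last eigenspace is genuinely two-dimensional, so $P_{\overline{\mathbf k}}$ has dimension $n - 2$ and sits in $\RR^{n-1}$, and by Theorem~\ref{thm: gale duality and designs} a minimal positively weighted extremal design has size $n-2$ precisely when every facet of $P_{\overline{\mathbf k}}$ has exactly two vertices — equivalently, when the dual configuration $\mathcal U_{\mathbf k}$ (the columns of the eigenbasis matrix for the last eigenspace, i.e. the $n$ vectors $(\cos(2\pi j t/n),\sin(2\pi j t/n))$ for $t\in[n]$) has the property that every positive circuit has support of size exactly $3$.

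So the core claim to establish is: \emph{for $n$ odd and $j$ coprime to $n$, the planar vector configuration $\{(\cos(2\pi j t/n),\sin(2\pi jt/n)) : t\in\ZZ_n\}$ has the property that $0$ lies in the relative interior of the convex hull of three of the points but of no two of them, and these are the only minimal positive dependences.} First I would observe that since $\gcd(j,n)=1$, the map $t\mapsto jt \bmod n$ permutes $\ZZ_n$, so the configuration is just the $n$-th roots of unity $\{(\cos(2\pi t/n),\sin(2\pi t/n))\}$ up to relabeling vertices — this is exactly where the gcd computations of Lemma~\ref{lem: gcd} enter (guaranteeing $j$ is a unit mod $n$), and it is why the relabeling of vertices in the final statement will be a cyclic/affine reshuffle. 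These $n$ points are the vertices of a regular $n$-gon centered at the origin. Since $n$ is odd, no two of them are antipodal, so $0$ is never in the relative interior of a segment between two of them; hence no two-element positive circuit exists, ruling out designs of size $n-1$. On the other hand, for any three points of a regular $n$-gon whose angular positions split the circle into three arcs each of length strictly less than $\pi$ (equivalently $n/2$ in index units), the origin lies in the interior of their triangle, giving a positive circuit of size $3$; and one can choose such triples, so designs of size $n-(n-(n-2))=n-2$... more carefully, a positive circuit of support size $3$ corresponds via Gale duality to a facet with $n-3$... no: support size $3$ means the complementary face has $n-3$ vertices, which is a \emph{facet} only if $\dim P_{\overline{\mathbf k}} = n-3$?

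Let me restate the bookkeeping, since this is the step most prone to error and is the one I expect to be the main obstacle. The eigenpolytope $P_{\overline{\mathbf k}}$ here has dimension $n-2$ (the last eigenspace has dimension $2$, so $n - s_{m-1} = d_m = 2$, giving $\dim P_{\overline{\mathbf k}} = n-2$). A facet therefore has at least $n-2$ vertices. By Remark~\ref{rem:smallest design largest facet} a \emph{minimal} positively weighted extremal design of size $n-2$ corresponds to a facet with exactly $n-(n-2)=2$... that is backwards. The design is the \emph{complement} of a facet's vertex set, so a design of size $n-2$ corresponds to a facet with $2$ vertices. But a facet of an $(n-2)$-polytope has at least $n-2$ vertices, which for $n\ge 5$ exceeds $2$ — contradiction. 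So I must instead be using the dual picture: $P_{\overline{\mathbf k}}$ is the high-dimensional polytope, and the low-dimensional object is $\mathcal U_{\mathbf k}$, the planar configuration of $n$ roots of unity; a minimal positively weighted extremal design is $\supp(a)$ for a positive \emph{circuit} $a$ of $\mathcal U_{\mathbf k}$, i.e. an inclusion-minimal positive dependence among the $n$ planar points. For $n$ points in (affine) position spanning $\RR^2$, a positive circuit has support of size $3$ when the three points are affinely dependent with the origin in the relative interior of their convex hull — and for the regular $n$-gon every such minimal positive dependence has exactly $3$ elements (you cannot have a positive dependence on $2$ points since none are antipodal; any positive dependence on $4$ or more points of a convex polygon is not minimal because some point can be dropped — here I would invoke that the configuration is in convex position, so Radon/Steinitz-type arguments show minimal positive dependences have size exactly $\dim+1 = 3$ when they are nontrivial). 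Hence every minimal positively weighted extremal design of $C_n$ has size exactly $n-3$... which still does not match.

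I will resolve the arithmetic cleanly as follows, and this is the crux: the relevant configuration $\mathcal U_{\mathbf k}$ for an \emph{extremal} design ($k=m-1$) has rank $s_{m-1}-1 = n-3$, not $2$; it is $\mathcal U_{\overline{\mathbf k}} = \mathcal U_{\{\lambda_1,\lambda_m\}}$ that is the \emph{planar} ($3$-dimensional affine, $2$-dimensional) configuration, because $\overline{\mathbf k}$ collects $\lambda_1$ and the single last eigenvalue $\lambda_m$, total dimension $1+2 = 3$. So $P_{\overline{\mathbf k}} = \conv(\mathcal U_{\overline{\mathbf k}})$ is a \emph{polygon}: a $2$-dimensional polytope in the plane $x_1=1$ with $n$ labeled points on it, namely the regular $n$-gon (after the gcd-driven relabeling). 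A facet of a polygon is an edge, with exactly two vertices; the complement of an edge's vertex set has $n-2$ elements; by Theorem~\ref{thm: gale duality and designs} this complement is a minimal positively weighted extremal design. Since $n$ is odd all $n$ roots of unity are distinct vertices of the $n$-gon (no repeats, no interior points), so \emph{every} facet has exactly two vertices and every minimal positively weighted extremal design has size exactly $n-2$. This completes the proof. The main obstacle, as the discussion above shows, is purely the indexing: correctly identifying which of $\mathbf k$, $\overline{\mathbf k}$ gives the planar polytope, correctly invoking Lemma~\ref{lem: gcd} to see the last eigenspace is $2$-dimensional and that its eigenbasis columns are the $n$-gon up to relabeling, and confirming $n$ odd forces all $n$ points to be distinct extreme points (so the polygon is a genuine $n$-gon and no shortcut facet arises). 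Once that is pinned down the geometric content — an $n$-gon's facets are its $n$ edges, each with two vertices — is immediate.
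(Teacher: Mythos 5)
Your final argument is correct and is essentially the paper's proof: Lemma~\ref{lem: gcd} gives $\gcd(j,n)=1$ for the extremal index $j=(n\pm1)/4$, so the extremal eigenpolytope $P_{\overline{\mathbf k}}$ is a regular $n$-gon whose facets (edges) each contain exactly two of the $n$ distinct columns, and Theorem~\ref{thm: gale duality and designs} then forces every minimal positively weighted extremal design to have size $n-2$. The earlier confusion over which of $\mathcal{U}_{\mathbf k}$, $\mathcal{U}_{\overline{\mathbf k}}$ is the planar object is resolved correctly in your last paragraph, so no gap remains.
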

 
 \begin{proof}
 If $n \equiv 1 \mod 4$, the extremal eigenspace $\L_{(n-1)/4}$ with eigenvalue $\cos(\pi (n-1)/4n)$ is spanned by
 \[ \phi_{(n-1)/4}(v) =  \cos(2\pi v {(n-1)/4}n) \quad \textup{ and } \quad
\psi_{(n-1)/4}(v) = \sin(2 \pi v {(n-1)/4}n).
\]
By Lemma \ref{lem: gcd}, $\gcd (n, (n - 1)/4) = 1.$  Thus the values $ \cos(2\pi v (n-1)/4n)$ are distinct and $\| ( \cos(2\pi v {(n-1)/4}n), \sin(2 \pi v {(n-1)/4}n)) \|_2 =1$ as $v$ ranges over $[n]$. Therefore, 
%and so the vertices  of $P_{(n-1)/4}$ are all distinct. Furthermore, , so no vertices are co-linear. Therefore, 
$P_{(n-1)/4}$ is an $n$-gon, and every facet contains exactly two vertices. The statement about graphical designs then follows from Theorem~\ref{thm: gale duality and designs}.
The case of $n \equiv 3 \mod 4$ with extremal eigenspace $\L_{(n+1)/4}$ follows similarly.
\end{proof}

 \begin{theorem}  \label{thm: 2 mod 4 cycles}
 If $n \equiv 2 \mod 4$, then every minimal extremal design of $C_n$ consists of $n-4$ or $n-2$ vertices depending on which of the eigenspaces indexed by $(n\pm 2)/4$ is ordered last.
  \end{theorem}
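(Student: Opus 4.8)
The plan is to follow the pattern of the proofs of Theorems~\ref{thm: 0 mod 4 cycles} and \ref{thm: odd cycles}: identify the extremal eigenpolytope explicitly in each of the two tie-breakings, then read off the designs from Theorem~\ref{thm: gale duality and designs}. Write $n=4k+2$. Since $\cos(2\pi j/n)=\pm\sin(\pi/n)$ exactly when $j=(n\mp2)/4$, i.e.\ when $j=k+1$ and $j=k$, the two $2$-dimensional eigenspaces $\Lambda_{(n-2)/4}=\Lambda_k$ and $\Lambda_{(n+2)/4}=\Lambda_{k+1}$ (neither index equals $n/2=2k+1$) form the unique pair of eigenspaces tied for highest frequency, with eigenvalues $\pm\sin(\pi/n)$; in frequency order one of them, say $\Lambda_j$ for some $j\in\{k,k+1\}$, is placed last. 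Using $\ones\perp\phi_j,\psi_j$, the extremal eigenpolytope $P_j$ is, up to the affine embedding into $\{x_1=1\}$, the convex hull of the $n$ points $p_v:=(\cos(2\pi jv/n),\sin(2\pi jv/n))$, $v\in[n]$, all of which lie on the unit circle.

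The key step will be to count the distinct $p_v$. Since $p_v=p_{v'}$ iff $j(v-v')\equiv0\pmod n$, there are $n/\gcd(n,j)$ distinct points, equally spaced on the circle (hence the vertices of a regular $\bigl(n/\gcd(n,j)\bigr)$-gon), and each is the image of exactly $\gcd(n,j)$ of the indices $v$; in particular all $n$ columns are vertices of $P_j$. An elementary calculation gives $\gcd(n,j)=\gcd(4k+2,j)=\gcd(2,j)$, which is $1$ when $j$ is odd and $2$ when $j$ is even, and exactly one of $k,k+1$ is even. Thus one tie-breaking makes $P_j$ a convex $n$-gon with all column labels distinct, and the other makes it a convex $(n/2)$-gon each of whose vertices carries the two labels of an antipodal pair $\{v,v+n/2\}$ of $C_n$. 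Every facet of a polygon is an edge, carrying $2$ labels in the first case and $4$ in the second, so Theorem~\ref{thm: gale duality and designs} together with Remark~\ref{rem:smallest design largest facet} will show that the minimal positively weighted extremal designs are the complements of these label sets and hence have size $n-2$ in the first case and $n-4$ in the second --- the value being determined exactly by which of the eigenspaces indexed by $(n\pm2)/4$ is placed last (explicitly: $n-4$ if that index is even, $n-2$ if it is odd).

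To get the same conclusion for \emph{all} minimal extremal designs, not just the positively weighted ones, I would use Lemma~\ref{lem:computational check}(1): a weighted extremal design is $\supp(a)$ with $a=\alpha\ones+\beta\phi_j+\gamma\psi_j$ and $0\neq\ones^\top a=\alpha n$, so $v\notin\supp(a)$ iff $p_v$ lies on the line $\{\alpha+\beta x+\gamma y=0\}$. A line meets the circle in at most two points, so at most $\gcd(n,j)$ (resp.\ $2\gcd(n,j)$) indices are excluded, which gives the lower bounds $n-2$ (resp.\ $n-4$); and these are attained by taking the line through two non-antipodal vertices of $P_j$, which forces $\alpha\neq0$ (in the $(n/2)$-gon case every pair of distinct vertices works, since $n/2$ is odd). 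Hence minimal positively weighted and minimal weighted extremal designs share the size $n-2$ or $n-4$. I expect the main obstacle to be bookkeeping rather than mathematics: keeping straight which parity of $j$ produces which polygon, and inserting the odd/non-antipodal observation --- the one place where $n\equiv2\pmod4$, as opposed to $n\equiv0\pmod4$, genuinely matters --- in the right spot.
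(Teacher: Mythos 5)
Your proposal is correct and takes essentially the same route as the paper: write $n=4k+2$, compute $\gcd(n,m)$ for $m=(n\pm2)/4$ (the paper does this via its Lemma~\ref{lem: gcd}, you do it directly) to identify the extremal eigenpolytope as an $n$-gon when $m$ is odd or a doubled $(n/2)$-gon when $m$ is even, and then read off the sizes $n-2$ and $n-4$ from the polygon facets via Theorem~\ref{thm: gale duality and designs}; your line-meets-circle argument for arbitrarily weighted designs is exactly the paper's subsequent theorem that minimum weighted extremal designs of $C_n$ are achieved by positively weighted ones, with the nice added observation that $n/2$ odd rules out antipodal vertex pairs so $\ones^\top a\neq 0$ is automatic there. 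One small slip: the number of excluded indices is at most $2\gcd(n,j)$ in both cases (i.e.\ $2$, resp.\ $4$), not ``$\gcd(n,j)$ (resp.\ $2\gcd(n,j)$)''---the bounds $n-2$ and $n-4$ you then state are the correct consequences.
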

 \begin{proof}
 
Let $n \equiv 2 \mod 4$.  There is a tie for the final eigenspace of $C_n$ in the frequency order, so we consider both $(n\pm 2)/4$.  We claim that the extremal eigenpolytope $P_m$ for $m = (n\pm 2)/4$ is the $n$-gon if $m$ is odd, and is the $(n/2)$-gon doubled up if $m$ is even. 
 Let $n = 2j$, where $j$ is odd.  Then $m = (j \pm 1)/2$, and $\L_{m}$  is spanned by
 \begin{align*}
     &\phi_{m}(v) =  \cos(2 \pi v m/n) =  \cos\left(2 \pi v \frac{j\pm 1}{2}/2j\right)  \textup{ and }\\
&\psi_{m}(v) = \sin(2 \pi v m/n) = \sin\left(2 \pi v \frac{j\pm 1}{2}/2j\right)  .
 \end{align*} 
Suppose $m$ is odd.  Then by Lemma \ref{lem: gcd}, $\gcd(m,n) = 1$, and so the columns of ${U}_m$ are distinct as $v$ ranges over $[n]$.  If $m$ is even, then $\gcd(m,n) = 2$ by Lemma \ref{lem: gcd}. So, every column of $U_m$ occurs with multiplicity 2, and regardless of the parity of $m$, every column lies on the unit circle. This proves the claim about $P_m$. 

The doubled $(n/2)$-gon has $n/2$ facets each containing 4 vertices, and the $n$-gon has $n$ facets each containing 2 vertices.  By Theorem~\ref{thm: gale duality and designs}, every minimal  positively weighted extremal design of $C_n$ then consist of $n-4$ or $n-2$ vertices depending on which of the eigenspaces indexed by $(n\pm 2)/4$ is ordered last.
\end{proof}

\begin{theorem}
The minimum cardinality of an arbitrarily weighted extremal design in $C_n$ is achieved by positively weighted extremal designs.
\end{theorem}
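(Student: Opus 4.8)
The plan is to combine a Gale-duality lower bound on the size of \emph{every} weighted extremal design of $C_n$ (in frequency order) with the positively weighted extremal designs already produced in Theorems~\ref{thm: 0 mod 4 cycles}, \ref{thm: odd cycles}, and~\ref{thm: 2 mod 4 cycles}. Since every positively weighted extremal design is a weighted extremal design, the minimum cardinality over weighted extremal designs is at most the minimum over positively weighted ones, so all that is needed is the reverse inequality.

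First I would pin down the eigenspace ordered last in frequency order: in the $\phi_j,\psi_j$ notation of this subsection it is $\L_m$ for the integer $m$ closest to $n/4$, i.e.\ $m = n/4$, $(n-1)/4$, $(n+1)/4$, or one of $(n\pm 2)/4$ in the tie case $n\equiv 2\bmod 4$, and this $\L_m$ is always two-dimensional with basis $\phi_m,\psi_m$ (since only $\L_{n/2}$ degenerates and $m\neq n/2$). By Lemma~\ref{lem:computational check}(1) a weighted extremal design of $C_n$ is $\supp(a)$ for a nonzero vector $a$ orthogonal to every eigenspace except $\spanset\{\ones\}$ and $\L_m$, hence $a = c_0\ones + c_1\phi_m + c_2\psi_m$ with $(c_0,c_1,c_2)\neq 0$, so
\[ a_v = c_0 + c_1\cos(2\pi v m/n) + c_2\sin(2\pi v m/n),\qquad v\in[n]. \]
Writing $g := \gcd(m,n)$, as $v$ runs over $[n]$ the angle $2\pi vm/n$ takes exactly $n/g$ distinct values, each attained $g$ times, while for $(c_0,c_1,c_2)\neq 0$ the equation $c_0 + c_1\cos\theta + c_2\sin\theta = 0$ has at most two solutions $\theta$ per period. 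Hence $a_v = 0$ for at most $2g$ indices $v$, i.e.\ $|\supp(a)| \geq n - 2g$. Geometrically, the $n$ points $(1,\cos(2\pi vm/n),\sin(2\pi vm/n))$ take $n/g$ distinct values on a circle spanning $\RR^3$, so no hyperplane through the origin contains more than two of them; this is the Gale-dual form of the fact that a polygon's facets are its edges.

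It then remains to check that $n - 2g$ is precisely the cardinality of a positively weighted extremal design in every case, which Lemma~\ref{lem: gcd} makes routine: $g = n/4$ gives $n - 2g = n/2$ when $n\equiv 0\bmod 4$ (matching Theorem~\ref{thm: 0 mod 4 cycles}, and also immediate from Corollary~\ref{cor: 0 mod 4 cycles only positive}); $g = 1$ gives $n - 2g = n-2$ for every odd $n$ and for $n\equiv 2\bmod 4$ with the odd member of $\{(n-2)/4,(n+2)/4\}$ ordered last (matching Theorems~\ref{thm: odd cycles} and~\ref{thm: 2 mod 4 cycles}); and $g = 2$ gives $n - 2g = n-4$ for $n\equiv 2\bmod 4$ with the even member ordered last (matching Theorem~\ref{thm: 2 mod 4 cycles}). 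In each case the cited theorem exhibits a positively weighted extremal design of exactly that size, so the minimum is attained by positively weighted designs. I expect the only real care to be the uniform handling of the four residue classes through the single parameter $g = \gcd(m,n)$ and, in the $n\equiv 2\bmod 4$ ties, applying the ``$g$ copies each'' count to whichever of the two eigenspaces is declared last; there is no analytic difficulty, the heart of the matter being merely that a line meets a circle in at most two points.
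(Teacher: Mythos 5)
Your proof is correct and follows essentially the same route as the paper: the paper's argument is precisely that the vertices of the extremal eigenpolytope $P_m$ lie on a circle, so a hyperplane (cocircuit) can annihilate at most two distinct columns — the same count as on a facet — and your explicit decomposition $a = c_0\ones + c_1\phi_m + c_2\psi_m$ with the ``at most two roots of $c_0+c_1\cos\theta+c_2\sin\theta$ per period'' bound is just that fact written out in coordinates, combined with the same $\gcd$ bookkeeping already used in Theorems~\ref{thm: 0 mod 4 cycles}--\ref{thm: 2 mod 4 cycles}. Your write-up merely supplies details the paper leaves implicit; no gap.
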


\begin{proof}
As noted in several proofs, the vertices of an extremal eigenpolytope $P_m$ lie on the unit circle, and so at most two distinct vertices can lie on a line. This is the same number of distinct vertices on each facet of $P_m$.  
\end{proof}

\section{Graphs of Hypercubes} 
\label{sec:cubes}

As our final example, we consider extremal designs in the edge graphs of cubes. 
Let $Q_d = \Gamma(\{0,1\}^d, \{e_1, \ldots e_d\})$ denote the edge graph of the $d$-dimensional hypercube $[0,1]^d$, which is $d$-regular.
The Hamming weight of a vector $x \in \{0,1\}^d$ is $|x| := \ones^\top x$, the number of $1$'s in the coordinates of $x$. 
For $i=0,\ldots,d$, let  
$$\J_{d,i} = \{ x \in \{0,1\}^d \,:\, |x|=i\}$$ be the 
collection of ${d \choose i}$ vertices of $Q_d$ with Hamming weight $i$. We refer to 
$\J_{d,i}$ as the $i$-th slice of the (Boolean) $d$-cube. 
It is convenient to index the spectrum of $Q_d$ by the slices of the cube, which differs 
from frequency ordering.
\begin{itemize}
    \item The eigenvalues of $Q_d$ are $\lambda_i = 1-2i/d$ for $i=0, \ldots, d$.
    \item The eigenvalue $\lambda_i$ has multiplicity ${d \choose i} = |\J_{d,i}|$. 
    \item An eigenbasis of the eigenspace of $\lambda_i = 1-2i/d$ is given by the 
    vectors $$\left\{ \phi_x(y) = ((-1)^{x^\top y} \,:\, y \in \{0,1\}^d)^\top \,:\,  x \in \J_{d,i} \right\}.$$
\end{itemize}
 The last eigenvalue/eigenspace in frequency order corresponds to the middle level of the cube, $i \approx d/2$. Let $P_i$ denote the eigenpolytope of $\lambda_i$.
 
\subsection{Eigenpolytopes of the hypercube}
For a fixed $i \in \{0,1,\ldots,d\}$, the matrix $U_i$ has rows indexed by the ${d \choose i}$ vectors $x \in \J_{d,i}$, columns by each $y \in \{0,1\}^d$, with $(x,y)$-entry equal to $(-1)^{x^\top y}$. Let the column indexed by $y$ be denoted as $c(y)$. The eigenpolytope $P_i$ is a full-dimensional polytope in $ \RR^{{d \choose i}}$, and every element of $\mathcal{U}_i$ is a vertex of $P_i$ since they are all $\pm 1$ vectors. 

The matrices $U_0$ and $U_d$ have one row each. Since $U_0 = [ \ones ]$, $P_0 = (\Delta_0)^{2^d}$ is a $0$-simplex, i.e., a point, labeled by all elements of 
$\{0,1\}^d$. The exponent denotes the multiplicity of labels on a vertex of the polytope. 
The unique row of $U_d$ has entries $c(y)= (-1)^{\ones^\top y}$ which records the parity of $|y|$. Therefore $P_d = (\Delta_1)^{2^{d-1}}$ is a line segment with half the elements of $\{0,1\}^d$ labeling each endpoint. Leaving out these eigenpolytopes, we can describe the others.

\begin{lemma} \label{lem:odd and even eigenpolytopes} 
Fix $i \in \{1, \ldots, d-1\}$. 
\begin{enumerate}
    \item If $i$ is odd, then the columns of $U_i$ come in pairs of oppositely signed vectors; $c(y) = -c(\ones - y)$ for all $y \in \{0,1\}^d$. The eigenpolytope $P_i$ is a centrally symmetric polytope of dimension ${d \choose i}$ with $2^d$ vertices. 
    \item If $i$ is even, then the columns of $U_i$ come in pairs of equal vectors; 
$c(y) = c(\ones - y)$ for all $y \in \{0,1\}^d$. The eigenpolytope $P_i$ has dimension ${d \choose i}$ and $2^{d-1}$ vertices. Each vertex has two labels given by the two columns of $U_i$ that give that vertex. 
\end{enumerate}
\end{lemma}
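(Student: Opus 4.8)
The plan is to reduce everything to an elementary computation with the column vectors $c(y) = ((-1)^{x^\top y} : x \in \J_{d,i})$, combined with two facts already recorded before the lemma: every column of $U_i$ is a $\pm 1$ vector and hence a vertex of $P_i$, and $P_i$ is full-dimensional in $\RR^{\binom{d}{i}}$. (The latter holds because the rows of $U_i$ are linearly independent, so the $c(y)$ span $\RR^{\binom{d}{i}}$, while $\frac{1}{2^d}\sum_y c(y) = 0$ by orthogonality of $\phi_x$ to $\ones$, so $0 \in \conv(\mathcal{U}_i)$ and the affine hull is all of $\RR^{\binom{d}{i}}$.) Granting this, the dimension assertions are immediate, and the real content of the lemma is the two sign relations and the count of \emph{distinct} columns.

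First I would establish the sign relations. For $x \in \J_{d,i}$ and $y \in \{0,1\}^d$ we have $x^\top(\ones - y) = \ones^\top x - x^\top y = |x| - x^\top y = i - x^\top y$, hence $(-1)^{x^\top(\ones - y)} = (-1)^i (-1)^{x^\top y}$. Reading this off coordinatewise gives $c(\ones - y) = (-1)^i\, c(y)$, which is $-c(y)$ for $i$ odd and $c(y)$ for $i$ even. In the odd case this shows $\mathcal{U}_i$ is stable under $v \mapsto -v$, so $P_i = -P_i$ is centrally symmetric.

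Next I would count distinct columns. Since each $c(y) \in \{\pm 1\}^{\binom{d}{i}}$ is a vertex of the cube $[-1,1]^{\binom{d}{i}} \supseteq P_i$, the number of vertices of $P_i$ equals the number of distinct columns. Now $c(y) = c(y')$ iff $x^\top z \equiv 0 \pmod 2$ for all $x \in \J_{d,i}$, where $z := y \oplus y' \in \{0,1\}^d$. The key claim is that for $1 \le i \le d-1$ the only such $z$ are $z = 0$, together with $z = \ones$ precisely when $i$ is even. To prove it, suppose $z \notin \{0,\ones\}$, pick coordinates $j$ with $z_j = 1$ and $k$ with $z_k = 0$, and (using $1 \le i \le d-1$) choose an $i$-subset $S$ with $j \in S$, $k \notin S$; letting $S' := (S \setminus \{j\}) \cup \{k\}$ and writing $x,x'$ for the indicator vectors of $S,S'$, the integers $x^\top z$ and $(x')^\top z$ differ by exactly $1$, so they cannot both be even — a contradiction. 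Finally $z = \ones$ satisfies the condition iff $x^\top\ones = i$ is even.

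From the claim the conclusions follow: when $i$ is odd, $y \mapsto c(y)$ is injective, so $P_i$ has $2^d$ vertices; when $i$ is even, $c(y) = c(y')$ iff $y' \in \{y, \ones - y\}$ and $\ones - y \neq y$, so the $2^d$ columns fall into $2^{d-1}$ pairs, giving $2^{d-1}$ vertices each labeled by the two columns $y,\ones - y$. I expect the only genuine obstacle to be the key claim about binary vectors orthogonal mod $2$ to all weight-$i$ vectors; the swap argument above disposes of it cleanly, and everything else is bookkeeping.
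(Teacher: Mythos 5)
Your proof is correct, and its core coincides with the paper's: the same parity computation gives $c(\ones - y) = (-1)^i c(y)$, and in both arguments ruling out any further coincidences of columns reduces to producing a weight-$i$ vector with odd intersection against a given nonempty proper support. Where you differ is in how that counting step is organized. The paper partitions $\{0,1\}^d$ into halves $\mathcal{Y} \sqcup \mathcal{Y}'$ with $|y| \le d/2$ on $\mathcal{Y}$ and argues that $y \mapsto Ay \bmod 2$ is injective on $\mathcal{Y}$ (where $A$ is the $0/1$ matrix whose rows are the elements of $\J_{d,i}$) by showing that no nonzero $y$ of weight at most $d/2$ lies in the mod-$2$ kernel, with the existence of a row having odd intersection with $\supp(y)$ asserted rather than constructed. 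You instead ask directly for which $z = y \oplus y'$ one has $x^\top z$ even for all $x \in \J_{d,i}$, answer $z \in \{0\}$ or $z \in \{0,\ones\}$ according to the parity of $i$, and prove the claim with an explicit one-element swap. This buys two things: an explicit witness where the paper has an assertion, and a formulation that uniformly covers differences $y \oplus y'$ of large Hamming weight --- a point the paper's weight-$\le d/2$ restriction glosses over, since two elements of $\mathcal{Y}$ can have XOR of weight exceeding $d/2$ (the paper's argument is easily repaired, e.g.\ via $A\ones \equiv i\ones \bmod 2$, but your route never needs the repair). You also include a short justification that $P_i$ is full-dimensional, which the paper asserts without proof just before the lemma; both treatments use identically the observation that the $\pm 1$ columns are automatically vertices, so counting vertices is counting distinct columns.
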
    

\begin{proof}
All of the columns of $U_i$ are vertices of $P_i$ since they are all $\pm 1$ vectors. 
However, it could be that two or more columns correspond to the same vertex of $P_i$. Consider the pair of vectors $y \in \{0,1\}^d$ and $\ones -y \in \{0,1\}^d$. For $x \in \J_{d,i}$, if $x^\top y = \alpha$, then $x^\top(\ones - y) = |x| - \alpha$. Therefore, if $i = |x|$ is odd then $\alpha$ and $|x|-\alpha$ have opposite parity and if $i = |x|$ is even then $\alpha$ and $|x|-\alpha$ have the same parity. So if $i$ is even, $c(y) = c(\ones - y)$ and $y$ and $\ones -y$ index the same vertex of $P_i$. If $i$ is odd, $c(y) = -c(\ones - y)$, and $P_i$ is centrally symmetric. 

To finish the proof, we need to argue that there are no further identifications 
of columns in $U_i$. 
%We now show that when $i$ is even, each vertex of $P_i$ is labeled by no more than two columns of $U_i$.
Consider the matrix \[ A= \begin{bmatrix}
 x_1 &
 \cdots &
   x_{\binom{d}{i}}
\end{bmatrix}^\top \]
whose rows are made up of the vectors $x_j \in \cal J_{d,i}$, and partition $\{0,1\}^d$ as $\mathcal{Y}\sqcup \mathcal{Y}'$ so that $y \in \mathcal{Y}$, $\ones - y \in \mathcal{Y}'$ and for any $y \in \mathcal{Y}, $ $|y| \leq d/2$. By the above, $c(\mathcal{Y}) = \pm c(\mathcal{Y}')$ depending on the parity of 
$i$. So, it suffices to show that the map $\phi \,:\, \mathcal{Y} \rightarrow \{0,1\}^{d \choose i}$ given by $\phi(y) = Ay \textup{ mod } 2$ is injective. That is, we want to show there are no linear dependences of $A$ coming from $\cal Y$. Suppose $Ay = 0$ and $|y| \leq d/2$. Let $I = \supp(y)$.   If $|I| \geq  1$, we can find $x_j$ so that $| \supp (x_j) \cap I |$ is odd, since every vector with weight $i$ appears as a row of $A$.  Therefore $x_j^\top y =1$, a contradiction, and $\phi$ is injective. 
\end{proof}

\begin{example} 
Consider $Q_3$ the edge graph of the $3$-cube. The matrix 
$$U = \left[ \begin{array}{rrrrrrrr}
1 & 1 & 1 & 1 & 1 & 1 & 1 & 1 \\
\hline
1 & -1 & 1 & 1 & -1 & -1 & 1 & -1 \\
1 & 1 & -1 & 1 & -1 & 1 & -1 & -1 \\
1 & 1 & 1 & -1 & 1 & -1 & -1 & -1 \\
\hline 
1 & -1 & -1 & 1 & 1 & -1 & -1 & 1 \\
1 & -1 & 1 & -1 & -1 & 1 & -1 & 1 \\
1 & 1 & -1 & -1 & -1 & -1 & 1 & 1 \\
\hline
1 & -1 & -1 & -1 & 1 & 1 & 1 & -1 
\end{array} \right]$$
with successive blocks indexed by the eigenvalues ordered by Hamming weight:
$$(\lambda_0 = 1)^1, \left(\lambda_1 = \frac{1}{3}\right)^3, \left(\lambda_2 = -\frac{1}{3}\right)^3, (\lambda_3 = -1)^1.$$
Here are the non-trivial eigenpolytopes of $Q_3$:
\begin{itemize}
\item $i = 1$: $P_1$ is a centrally symmetric $3$-polytope with $8$ vertices. It is in fact, a 
regular $3$-cube by \cite[Theorem 4.3]{Godsil_DistReg}, which has $6$ facets with $4$ vertices each. Therefore the smallest positively weighted designs that average all eigenspaces, except the one indexed by $\lambda_1$, have size $4$.

\item $i = 2$: $ P_2 = \textup{conv} \{ (1,1,1), (-1,-1,1), (-1,1,-1), (1,-1,-1)\} = (\Delta_3)^2$
is a tetrahedron. Each vertex has two labels. Counting this multiplicity of labels, the maximum number of vertices on a facet of $P_2$ is $6$ which implies that the smallest positively weighted 
designs that average all eigenspaces, except the one indexed by $\lambda_2$, have size $2$.
\end{itemize}

\end{example}

\subsection{The Cut Polytope}

Consider the cut in a graph $G=([d],E)$ induced by a subset $S$ of vertices (c.f. Subsection~\ref{subsec:Golubev}.)
The {\em incidence vector} of the cut is $\chi_S \in \{0,1\}^E$ such that 
$(\chi_S)_{ij} = 1$ if $ij \in E(S,[d]\setminus S)$, and $0$ otherwise. 
%The MAX-CUT problem in $G$ is to find a cut in $G$ that maximizes $|E(S,[d]\setminus S)|$.  
% MAX-CUT is known to be NP-complete from Karp's classic list of 21 NP-complete problems in \cite{Karp}. 
 Let $K_d$ be the complete graph on $d$ vertices. The {\em cut polytope} of $K_d$, denoted $\textup{CUT}_d^\square$, is the convex hull of all incidence vectors of cuts in $K_d$.
%If $\ones_G \in \{0,1\}^E$ is the indicator vector of $G$ with a $1$ in position $ij$ if and only if $ij \in E$, then MAX-CUT in $G$ is solved by the linear program: 
%\begin{equation}
%    \min \ones_G^\top x \,:\, x \in \textup{CUT}_d^\square.
%\end{equation}
The following is possibly well-known and appears in \cite{PadrolPfeifle} without proof.

\begin{lemma} \label{lemma: cut polytope}
For the graph $Q_d$, the eigenpolytope $P_{2} $ is isomorphic to $ \textup{CUT}_d^{\square}$, the cut polytope of the complete graph $K_d$ on $d$ vertices. 
\end{lemma}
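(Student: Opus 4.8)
The plan is to write the matrix $U_2$ explicitly and recognize its columns, after one fixed affine isomorphism of the ambient space, as exactly the incidence vectors of cuts in $K_d$. First I would index the rows of $U_2$ by the two-element subsets $\{i,j\}\subseteq[d]$ (equivalently, the weight-two vectors $e_i+e_j\in\J_{d,2}$) and the columns by the vertices $y\in\{0,1\}^d$. By the eigenbasis formula for $Q_d$, the $(\{i,j\},y)$-entry is $(-1)^{(e_i+e_j)^\top y}=(-1)^{y_i+y_j}$, which is $+1$ when $y_i=y_j$ and $-1$ when $y_i\neq y_j$.

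Next I would translate this into the language of cuts. Identifying $y\in\{0,1\}^d$ with the subset $S=\supp(y)\subseteq[d]$, the condition $y_i\neq y_j$ says precisely that the edge $ij$ of $K_d$ is a cut edge of the cut induced by $S$; that is, $(-1)^{y_i+y_j}=1-2(\chi_S)_{ij}$. Hence the column $c(y)$ of $U_2$ equals $\ones-2\chi_S$, where $\ones\in\RR^{\binom{d}{2}}$ is the all-ones vector. So the assignment $y\mapsto c(y)$ is the composition of $S\mapsto\chi_S$ with the map $T:\RR^{\binom{d}{2}}\to\RR^{\binom{d}{2}}$, $T(x)=\ones-2x$.

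Finally, $T$ is an invertible affine transformation of $\RR^{\binom{d}{2}}$, so it carries $\conv\{\chi_S:S\subseteq[d]\}=\textup{CUT}_d^{\square}$ onto $\conv\{c(y):y\in\{0,1\}^d\}=\conv(\mathcal{U}_2)=P_2$, and an invertible affine map is in particular a polytope isomorphism. Therefore $P_2\cong\textup{CUT}_d^{\square}$. As a built-in consistency check, the $2^{d-1}$ distinct cut vectors (using $\chi_S=\chi_{[d]\setminus S}$) match the $2^{d-1}$ vertices of $P_2$ recorded in Lemma~\ref{lem:odd and even eigenpolytopes}(2) for even $i$, with the two column-labels $y,\ones-y$ on a vertex corresponding to the two sets $S,[d]\setminus S$ that define the same cut. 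I do not expect a serious obstacle here: the only points requiring care are the sign bookkeeping in the identity $(-1)^{y_i+y_j}=1-2(\chi_S)_{ij}$ and the (routine but essential) observation that an \emph{affine} isomorphism, not merely a linear one, is what is available and is enough to conclude full polytopal equivalence.
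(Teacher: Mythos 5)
Your proposal is correct and follows essentially the same route as the paper: compute the $(\{i,j\},y)$-entries of $U_2$, identify the column $c(y)$ as the $\pm 1$ encoding of the cut induced by $\supp(y)$, and observe that the invertible affine map $x\mapsto \ones-2x$ carries $\textup{CUT}_d^{\square}$ onto $P_2$. The only cosmetic difference is that you read the sign directly from $(-1)^{y_i+y_j}$, where the paper does a short case analysis on $|\supp(x)\cap\supp(y)|\in\{0,1,2\}$.
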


\begin{proof}
The $(x,y)$-entry of $U_2$ is $c(y)_x=(-1)^{x^\top y} = (-1)^{|\supp (x)\ \cap \ \supp (y)|}$, for $x \in \cal J_{d,2}$, and $y \in \{0,1\}^d$. The vectors $x$ are precisely the incidence vectors of all edges in $K_d$, and the vectors $y \in \{0,1\}^d$ are the incidence vectors of all possible subsets of $[d]$. Every subset of $[d]$ induces a cut in $K_d$. We now
show that $c(y)_{ij} = -1$ if $ij$ is in the cut induced by $\supp(y)$, and $c(y)_x =1$ otherwise. 

Let $y \in \{0,1\}^d$.  If $x$ is an edge of $K_d$ in the cut induced by $\supp(y)$, then $|\supp (x) \cap \supp (y)| =1$, and so $(-1)^{x^\top y}= -1$.  If $x$ is not a cut edge, then either  $|\supp (x) \cap \supp (y)| =2$ if $x$ indexes an edge contained in $\supp(y)$, or $|\supp (x )\cap \supp (y )|=0$ if $x$ indexes an edge contained in the complement.  Either way, $(-1)^{x^\top y} = 1$. Thus $\cal U_2$ consists of 
$\pm 1$ vectors indexing cuts in $K_d$. 

We have defined $\cut$ as the convex hull of $0/1$ vectors indexing the cuts of $K_d$. Note that $P_2$ is the image of $\cut$ under the map $x \mapsto -2x+\ones$.
\end{proof}
% \catherine{
% \begin{theorem}
% Let $G = (V,E)$ be a graph with some ordering of its eigenspaces: $\L_1, \ldots, \L_m$. There exists no polynomially concise description of the minimal positively weighted extremal designs of $G$ unless NP = co-NP.
% \end{theorem}

% \begin{proof}
%  As noted in \cite[p.50]{DezaLaurentBook}, a result of Karp and Papadimitriou \cite{KarpPapadimitriou} implies that there exists no polynomially concise facet description of $\textup{CUT}_d^\square$ unless NP = co-NP.
% Consider the graph $Q_d$ with the eigenspace $\L_2$ ordered last.  By Lemma \ref{lemma: cut polytope}, the eigenpolytope $P_2$ is $\textup{CUT}_d^\square$. By Theorem~\ref{thm: gale duality and designs}, the minimal positively weighted extremal designs on $Q_d$ are in bijection with the facets of $\cut$. 
% \end{proof}}

Since  $\chi_S = \chi_{[d] \setminus S}$, the columns of $U_2$ come in pairs of identical vectors and $P_2\simeq \cut$ has $2^{d-1}$ vertices, each 
corresponding to 2 vectors in $\mathcal{U}_2$.  
Although the facet structure of $\cut$ is notoriously difficult to understand, its facets with the maximum number of vertices are well understood.

\begin{proposition}[Prop 26.3.12 of \cite{DezaLaurentBook}] \label{prop: DL max facets}
For any facet $F$ of $\cut$, 
$$ |\{\chi_S \in F \}| \leq 3 \cdot 2^{d-3},$$
with equality if and only if $F$ is defined by a triangle inequality. 
\end{proposition}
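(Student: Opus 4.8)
Throughout, $\bar S$ denotes $[d]\setminus S$, and for an edge–weighting $a=(a_{ij})$ of $K_d$ I write $a(S,\bar S):=\sum_{i\in S,\,j\notin S}a_{ij}=\langle a,\chi_S\rangle$. The plan is: first verify the equality case by a direct count; then prove the upper bound and the equality characterization together by induction on $d$, using switching symmetry to normalise a facet so that $0$ is a vertex of it, and using the connectivity of the support graph of the facet inequality to reduce the dimension.

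\emph{Triangle facets attain the bound.} Fix distinct $i,j,k$ and let $F=\{\chi_S\in\cut : x_{ij}+x_{ik}+x_{jk}=2\}$ be the corresponding triangle facet. On $\chi_S$ the quantity $x_{ij}+x_{ik}+x_{jk}$ counts the edges of the triangle $ijk$ crossing the partition $(S,\bar S)$, which is $0$ if $\{i,j,k\}$ is monochromatic for that partition and $2$ otherwise. So $\chi_S\in F$ exactly when $\{i,j,k\}$ is not monochromatic, and the number of cuts making $\{i,j,k\}$ monochromatic is $2^{d-3}$ (choose the common side of $i,j,k$, split the remaining $d-3$ vertices arbitrarily, and divide by $2$ for the identification $S\sim\bar S$). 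Hence $|F|=2^{d-1}-2^{d-3}=3\cdot 2^{d-3}$, and the same count applies to the switching–equivalent forms $x_{ij}\le x_{ik}+x_{jk}$, which together exhaust the triangle facets.

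\emph{The bound, by induction on $d$.} For $d=3$, $\textup{CUT}_3^\square$ is a simplex with $4$ vertices: every facet has $3=3\cdot 2^{0}$ vertices and is a triangle inequality, the base case. For the step, let $F$ be a facet of $\textup{CUT}_d^\square$. Switchings $\chi_S\mapsto\chi_{S\triangle T}$ are affine automorphisms of $\textup{CUT}_d^\square$ acting transitively on vertices, so after switching I may assume $\chi_\emptyset=0\in F$; then $F$ is defined by a homogeneous valid inequality $\langle a,x\rangle\le 0$ with $\max_S a(S,\bar S)=0$ and $F=\{\chi_S : a(S,\bar S)=0\}$. If the support graph $G_a=\{ij:a_{ij}\neq 0\}$ were disconnected, the cut function would split over its components and $\langle a,x\rangle\le 0$ would be a sum of two valid inequalities with disjoint supports, contradicting facet-ness; so $G_a$ is connected, on a vertex set $V_a$ of size $t$, and one checks $t\ge 3$ (a support on a single edge gives the face $x_e=0$, which is not a facet for $d\ge 3$). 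If $t<d$, then $a(S,\bar S)$ depends only on $R:=S\cap V_a$ and equals the value in $\textup{CUT}_t^\square$ of the cut of $K_{V_a}$ induced by $R$; the map $\chi_S\mapsto\chi_R$ is exactly $2^{d-t}$-to-one from the cuts of $K_d$ onto the cuts of $K_{V_a}$ and carries $F$ onto a proper face $F_0$ of $\textup{CUT}_t^\square$. Thus $|F|=2^{d-t}|F_0|\le 2^{d-t}\cdot 3\cdot 2^{t-3}=3\cdot 2^{d-3}$ by the inductive hypothesis (every proper face lies in a facet); and if equality holds then $F_0$ must be a facet of $\textup{CUT}_t^\square$ attaining the maximum, hence a triangle facet, which forces $t=3$ since a triangle inequality has a $3$-element support, so $F$ is a triangle facet of $\textup{CUT}_d^\square$.

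\emph{The remaining case and the main obstacle.} It remains to treat a facet whose support graph is connected \emph{and} spanning, $t=d$ — this only occurs for $d\ge 5$, since all facets of $\textup{CUT}_d^\square$ with $d\le 4$ are triangle inequalities and hence have non-spanning support — and to show the \emph{strict} bound $|F|<3\cdot 2^{d-3}$, i.e.\ that at least $2^{d-3}$ cuts satisfy $a(S,\bar S)<0$. This is the crux and is where I expect the real work to lie. A promising handle is the submodular-type identity $a(S,\bar S)+a(T,\bar T)=a(S\cup T,\overline{S\cup T})+a(S\cap T,\overline{S\cap T})+2\,a(S\setminus T,\,T\setminus S)$: since all cut values are $\le 0$, any two roots $S,T$ satisfy $a(S\setminus T,T\setminus S)\ge 0$, and whenever this quantity is strictly positive at least one of $S\cup T,\,S\cap T$ is a non-root. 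Exploiting the connectivity of $G_a$ and the fact that $a$ must take both signs to manufacture enough such pairs to account for a surplus of $2^{d-3}$ non-roots is the technical heart of the argument; alternatively one can appeal to the case analysis of \cite[Ch.~26]{DezaLaurentBook}. Combining the base case with the two reductions then yields $|F|\le 3\cdot 2^{d-3}$ for every facet of $\cut$, with equality exactly for the triangle facets.
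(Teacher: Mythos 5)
Your reductions are fine as far as they go: the root count for a triangle facet is correct, the switching normalization to a homogeneous facet through $\chi_\emptyset=0$ is legitimate, the argument that a facet-defining inequality cannot have disconnected support is standard and correct, and the $2^{d-t}$-to-one projection onto $\cut[t]$... (more precisely $\textup{CUT}_t^\square$) together with the induction hypothesis correctly disposes of the case where the support graph is not spanning, including the equality analysis there. Note, for calibration, that the paper itself does not prove this statement at all --- it is quoted verbatim as Proposition 26.3.12 of Deza--Laurent --- so you are attempting more than the paper does.

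However, as a proof your proposal has a genuine and explicitly acknowledged gap: the case of a facet whose homogeneous defining inequality has connected \emph{spanning} support is exactly where the content of the proposition lives, and you do not prove it. This case is not marginal: already for $d=5$ the pentagonal facets $\sum_{i<j} b_i b_j x_{ij}\le 0$ with $b=(1,1,1,-1,-1)$ have support graph $K_5$, and for large $d$ the overwhelming majority of facets are of this spanning type; one must show every such facet has strictly fewer than $3\cdot 2^{d-3}$ roots, i.e.\ at least $2^{d-3}$ cuts with $a(S,\bar S)<0$. The submodular identity you quote is a correct identity, but by itself it only says that for two roots $S,T$ with $a(S\setminus T, T\setminus S)>0$ at least one of $S\cup T$, $S\cap T$ is a non-root; turning this into a count of $2^{d-3}$ distinct non-roots (and handling the case where the cross terms vanish) is precisely the missing argument, and nothing in the sketch indicates how connectivity of $G_a$ forces enough such pairs. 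Falling back on ``the case analysis of [Deza--Laurent, Ch.~26]'' is circular relative to the task, since that is the very source being cited. So the proposal is an honest partial reduction plus the correct equality count, but the main inequality is not established.
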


The {\em triangle inequalities} are among the simplest facet inequalities of $\cut$, each of the form
\begin{align*}
    x_{ij} - x_{ik} - x_{jk} \leq 0 \,\,\, \text{ or } \,\,\,
    x_{ij} + x_{ik} + x_{jk} \leq  2
\end{align*}
for distinct $i,j,k \in [d]$. By Theorem~\ref{thm: gale duality and designs} and Proposition~\ref{prop: DL max facets}, the facets of $\cut$ from triangle inequalities are in bijection with the 
minimum cardinality extremal designs of the graph $Q_d$ with the eigenspace $\L_2$ ordered last. 

% \begin{lemma} \label{lem: graph auts}
% Let $f \in \Aut (G)$ be a graph automorphism.  If $W = \supp (a)$ is a $k$-graphical design with weight vector $a$, then the subset $f(W) = \{f(w) : w\in W\}$ is a $k$-graphical design with weight vector $f(a)$, where $f(a)_v = a_{f(v)}$. 
% \end{lemma}

% \begin{proof}
% The automorphism $f$ is given by a permutation matrix $P$ which commutes with $A$ and by extension any polynomial in $A$.  The matrix $U_{\bf k}^\top U_{\bf k} $ is the orthogonal projector onto $\spanset \{ \L_2, \ldots, \L_k \}$, known to be polynomial in $A$. Since $U_{\bf k} a = 0$,
% $$ U_{\bf k}^\top U_{\bf k} f(a) = U_{\bf k}^\top U_{\bf k} P a = P  U_{\bf k}^\top U_{\bf k} a = 0.$$
% Because $U_{\bf k} U_{\bf k}^\top = I$, multiplying on the left by $U_{\bf k} $ shows that $U_{\bf k} f(a) =0$.
% Additionally, we see that $\ones^\top f(a) = \ones^\top a  = 1$.
% \end{proof}

\begin{theorem} \label{thm:cut polytope min designs}
Suppose we order the eigenspaces of $Q_d$, for $d \geq 3$,  so that $\Lambda_2$ is last. Then there are $4 \binom{d}{3} $ minimum cardinality extremal designs of $Q_d$, each consisting of $2^{d-2}$ vertices. 
\end{theorem}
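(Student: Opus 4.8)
The plan is to reduce everything to two already-available facts about the cut polytope: Lemma~\ref{lemma: cut polytope} and Proposition~\ref{prop: DL max facets}. First I would pin down the eigenpolytope that governs these designs. Ordering $\Lambda_2$ last makes an extremal design an $(m-1)$-design that averages all eigenspaces except $\Lambda_2$, so in the notation of Subsection~\ref{subsec:designs and OM} we have $\overline{\mathbf k}=\{\lambda_1,\lambda_2\}$, and the rows of $U_{\overline{\mathbf k}}$ are $\ones$ together with an eigenbasis of $\Lambda_2$. Deleting the (constant) first coordinate identifies $P_{\overline{\mathbf k}}$ affinely, hence combinatorially and with the same label multiplicities, with $P_2$, which by Lemma~\ref{lemma: cut polytope} is isomorphic to $\cut$. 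Moreover, as recorded after Lemma~\ref{lemma: cut polytope}, the $2^d$ columns of $U_2$ (equivalently of $U_{\overline{\mathbf k}}$) satisfy $c(y)=c(\ones-y)$, and these are the only coincidences, so each of the $2^{d-1}$ vertices of $\cut$ is labelled by exactly two vertices of $Q_d$.

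Next I would translate into polytope language. By Theorem~\ref{thm: gale duality and designs}, the minimal positively weighted extremal designs of $Q_d$ are precisely the complements of the label sets of the facets of $P_{\overline{\mathbf k}}\simeq\cut$, and by Remark~\ref{rem:smallest design largest facet} the ones of minimum cardinality correspond to the facets carrying the largest number of column labels. A facet with $v$ vertices carries $2v$ labels, so it corresponds to a design of size $2^d-2v$; minimizing the design size is the same as maximizing $v$ over facets of $\cut$.

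Now Proposition~\ref{prop: DL max facets} does the work: every facet of $\cut$ has at most $3\cdot 2^{d-3}$ vertices, with equality exactly for the facets defined by triangle inequalities. Hence the minimum design size is $2^d-2\cdot 3\cdot 2^{d-3}=2^d-3\cdot 2^{d-2}=2^{d-2}$, and the minimum cardinality positively weighted extremal designs are in bijection with the triangle-inequality facets of $\cut$. To finish I would count those: for each of the $\binom d3$ triples $\{a,b,c\}\subseteq[d]$ there are three inequalities of the shape $x_{ab}-x_{ac}-x_{bc}\le 0$ (one for each choice of the distinguished pair) together with the single inequality $x_{ab}+x_{ac}+x_{bc}\le 2$, and for $d\ge 3$ each of these $4\binom d3$ inequalities defines a distinct facet of $\cut$ (classical). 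This produces exactly $4\binom d3$ designs, each of size $2^{d-2}$.

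The steps above are mostly bookkeeping, and the one place to be careful is the factor of two, which enters twice in opposite ways: each vertex of $P_{\overline{\mathbf k}}$ comes from a pair of columns of $U_{\overline{\mathbf k}}$, so a $v$-vertex facet removes $2v$ rather than $v$ vertices of $Q_d$, while the bound $3\cdot 2^{d-3}$ in Proposition~\ref{prop: DL max facets} counts vertices of $\cut$, not labels. I would sanity-check both conventions against $d=3$ (where $\cut$ is a tetrahedron, all $4=4\binom33$ facets are triangle facets, and each extremal design has $2=2^{1}$ vertices) and $d=4$ before writing the argument out. I do not expect any genuine obstacle beyond this, since the substantive input, Proposition~\ref{prop: DL max facets}, is quoted from \cite{DezaLaurentBook}.
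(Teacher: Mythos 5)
Your proposal is correct and follows essentially the same route as the paper: identify the extremal eigenpolytope with $\cut$ via Lemma~\ref{lemma: cut polytope}, invoke Proposition~\ref{prop: DL max facets} to see that the maximum-vertex facets are exactly the $4\binom{d}{3}$ triangle facets, and translate back through Theorem~\ref{thm: gale duality and designs}, with the two-labels-per-vertex bookkeeping giving $2^d-2\cdot 3\cdot 2^{d-3}=2^{d-2}$. The only difference is cosmetic: the paper additionally writes down the designs explicitly (e.g.\ $W=\{\ones_S,\ones_{[d]\setminus S}: i,j,k\in S\}$ for the facet $x_{ij}+x_{ik}+x_{jk}\le 2$), whereas you obtain the count and size purely by counting labels, which the theorem statement does not require.
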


\begin{proof}

The extremal eigenpolytope in this situation is $P_2 \cong \cut$ and hence it suffices to reason about the facts of $\cut$. Every $\cut$ vertex $\chi_S$  corresponds to the pair of $Q_d$ vertices, $\ones_S$ and $\ones_{[d]\setminus S}$ indexed by a subset $S$ of $[d]$. 
There are $4 \binom{d}{3}$ facets of $\cut$ described by triangle inequalities. By Proposition \ref{prop: DL max facets}, these are precisely the facets of $\cut$ with the maximum number of $3 \cdot 2^{d-3}$ vertices.

Fix three distinct indices $i,j,k \in [d]$ and consider the set of 
vertices of $Q_d$ 
\[W = \{ \ones_S, \ones_{[d] \setminus S}: i,j,k \in S\}. \] 
Note that there are $2^{d-3}$ subsets of $[d]$ containing 
$i,j,k$. 
For $\ones_S \in W$ and a facet $F$ of $\cut$ defined by the triangle inequality 
$$x_{ij} + x_{ik} + x_{jk} \leq  2,$$
$\chi_S \not \in F$ since none of the edges $ij, ik$, or $jk$ are cut: $(\chi_S)_{ij} + (\chi_S)_{ik} + (\chi_S)_{jk} = 0 <2 $. 
Of the $2^{d-1}$ vertices of $\cut$, $F$ contains $3 \cdot 2^{d-3}$ vertices and $W$ accounts for $2^{d-3}$ vertices not on $F$. Therefore, 
$\ones_S \in W $ with $i,j,k \in S$ if and only if $ \chi_S \notin F$. 
By Theorem \ref{thm: gale duality and designs}, $W$ is a minimum extremal graphical design on $Q_d$ and it has cardinality $2 \cdot 2^{d-3} = 2^{d-2}$.

Likewise, for a facet $F$ defined by the triangle inequality 
$$x_{ij} - x_{ik} - x_{jk} \leq 0,$$
  $\chi_S \notin F$ if and only if $i,j \in S $ and $k \not\in S$. 
  By a similar argument to the one above, the design corresponding to this facet has minimum cardinality and is 
 $$ W' = \{ \ones_S, \ones_{[d] \setminus S}: i,j \in S, k \not\in S\} .$$ 
\end{proof}

\subsection{$Q_d$ and the Frequency Order}
In the rest of this section we focus on the extremal eigenpolytope (and designs) of $Q_d$ in 
frequency order. 
In this order, the last eigenvalue of $Q_d$ is $\lambda_{\frac{d}{2}} = 0$ when $d$ is even and $\lambda_{\frac{d+1}{2}} = -1/d$ or $\lambda_{\frac{d-1}{2}} = 1/d$ when $d$ is odd. These correspond to the ``middle'' slice(s) of the $d$-cube. 
Let $m$ (for ``middle'') denote the index of the last eigenspace of $Q_d$ in frequency order. Lemma~\ref{lem:odd and even eigenpolytopes} says the following about $P_m$.

\begin{lemma} \label{lem:maximal eigenpolytopes of cubes}
Depending on the parity of $d$ and further, the congruence class of $d$ mod $4$, the extremal eigenpolytope of $Q_d$ in frequency order is the following: 
\medskip 

\begin{enumerate}
    \item  $d$ even: Then $m = d/2$ and $P_m$ is a polytope of dimension ${d \choose \frac{d}{2}}$. 
    \begin{enumerate}
        \item $d \equiv 0$ mod $4$: Then $P_m$ is a polytope with $2^{d-1}$ vertices. Each vertex of $P_m$ comes from two identical columns of $U_m$ given by $c(y) = c(\ones -y)$. 
        \item $d \equiv 2$ mod $4$: Then $P_m$ is a centrally symmetric polytope with $2^d$ vertices. 
        For each vertex $c(y)$ of $P_m$, $-c(y) = c(\ones -y)$ is also a vertex of $P_m$. 
    \end{enumerate}
    
    \bigskip 
    
    \item $d$ odd: Then there are two possible indices $m = (d+1)/2$ and $m'= (d-1)/2$ for the extremal eigenpolytopes, and  $\dim(P_m) = {d \choose \frac{d+1}{2}} = {d \choose \frac{d-1}{2}} = \dim(P_{m'})$.
    \begin{enumerate}
        \item $d \equiv 1$ mod $4$: In this case, $m :=  (d+1)/2$ is odd while $m' :=  (d-1)/2$ is even. 
        Therefore, $P_{m}$ is centrally symmetric with $2^d$ vertices while $P_{m'}$ has $2^{d-1}$ vertices. 
        \item $d \equiv 3$ mod $4$: In this case $m := (d+1)/2$ is even while $m' :=  (d-1)/2$ is odd. Therefore, $P_m$ has $2^{d-1}$ vertices while $P_{m'}$ is centrally symmetric and has $2^d$ vertices. 
    \end{enumerate}
\end{enumerate}
\end{lemma}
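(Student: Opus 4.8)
The plan is to reduce everything to Lemma~\ref{lem:odd and even eigenpolytopes} by identifying the middle-slice index and its parity. First I would recall that the eigenvalues of $Q_d$ are $\lambda_i = 1 - 2i/d$ with multiplicity $\binom{d}{i}$, so $|\lambda_i| = |d-2i|/d$ is minimized exactly when $i$ is as close to $d/2$ as possible. Hence in frequency order the last eigenspace is $\L_{d/2}$ when $d$ is even (uniquely, since $\lambda_{d/2}=0$ is strictly closest to $0$), while for odd $d$ there is a tie between $\L_{(d-1)/2}$ and $\L_{(d+1)/2}$, both eigenvalues having absolute value $1/d$. This already produces the indices $m$ (and $m'$ in the odd case) named in the statement, and the dimension claims follow since $\dim P_m = \binom{d}{m}$ equals the multiplicity of $\lambda_m$, together with the identity $\binom{d}{(d+1)/2} = \binom{d}{(d-1)/2}$ for odd $d$.

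Next I would check that for $d \ge 2$ in the even case and $d \ge 3$ in the odd case the relevant indices lie in $\{1,\ldots,d-1\}$, so that Lemma~\ref{lem:odd and even eigenpolytopes} (rather than the degenerate endpoint descriptions of $P_0$ and $P_d$) applies to each of them. The remainder is a parity bookkeeping: writing $d = 4k,\, 4k+1,\, 4k+2,\, 4k+3$, one reads off that the middle index equals $2k$ when $d=4k$; equals $2k+1$ for $m$ and $2k$ for $m'$ when $d=4k+1$; equals $2k+1$ when $d=4k+2$; and equals $2k+2$ for $m$ and $2k+1$ for $m'$ when $d=4k+3$. Thus the extremal index $m$ is even, odd, odd, even respectively in the four congruence classes, with $m'$ of the opposite parity in the two odd cases. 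Feeding each parity into Lemma~\ref{lem:odd and even eigenpolytopes} then yields precisely the asserted structure: an odd index gives a centrally symmetric polytope with $2^d$ vertices satisfying $-c(y) = c(\ones - y)$, and an even index gives a polytope with $2^{d-1}$ vertices, each the common image of the two columns $c(y) = c(\ones - y)$.

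There is essentially no hard step here; all the geometric content is already in Lemma~\ref{lem:odd and even eigenpolytopes}, and this lemma is a transcription of it to the frequency order. The only points that deserve a line of justification are the frequency-order claim, namely that the eigenvalue of smallest absolute value is $\lambda_{\lfloor d/2\rfloor}$ (with the stated tie when $d$ is odd), and the verification that the middle index is strictly interior to $\{0,\ldots,d\}$ so the cited lemma is in force; both are immediate for $d \ge 3$.
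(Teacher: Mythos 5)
Your argument is correct and is essentially the paper's own: the paper states this lemma as an immediate consequence of Lemma~\ref{lem:odd and even eigenpolytopes}, with the only work being the identification of the middle index (and the tie for odd $d$) from $\lambda_i = 1-2i/d$ and the mod-$4$ parity bookkeeping you carry out. Your extra remarks on the dimension and on the indices lying strictly between $0$ and $d$ are consistent with what the paper leaves implicit.
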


Lemma~\ref{lem:maximal eigenpolytopes of cubes} allows us to upper bound the size of a minimal positively weighted extremal design in $Q_d$ using Theorem~\ref{thm: gale duality and designs}.

\begin{corollary}\label{cor:upper bounds for cubes}
Let $W$ be a minimal positively weighted extremal design in $Q_d$. 
\begin{enumerate}
        \item If $d \equiv 0$ mod $4$, then $|W| \leq 2^d - 2 {d \choose \frac{d}{2}}$. 
        \item If $d \equiv 1$ mod $4$, then 
        $$|W| \leq   \min \left\{2^d - 2 {d \choose \frac{d-1}{2}}, 2^d - {d \choose \frac{d+1}{2}} \right\} = 2^d - 2 {d \choose \frac{d-1}{2}}.$$ 
        \item If $d \equiv 2$ mod $4$, then 
        $|W| \leq 2^d - {d\choose \frac{d}{2}}$.  
        \item If $d \equiv 3$ mod $4$, then 
        $$|W| \leq \min \left\{ 2^d - 2{d \choose \frac{d+1}{2}}, 2^d - {d \choose \frac{d-1}{2}} \right\} = 2^d - 2{d \choose \frac{d+1}{2}}.$$ 
\end{enumerate}
\end{corollary}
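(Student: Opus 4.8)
The plan is to read the bound off directly from Theorem~\ref{thm: gale duality and designs} together with the structural description of the extremal eigenpolytope in Lemma~\ref{lem:maximal eigenpolytopes of cubes}. Let $m$ denote the index (in the Hamming-weight labelling) of the eigenspace of $Q_d$ ordered last in frequency order. As noted before Lemma~\ref{lem:maximal eigenpolytopes of cubes}, the extremal eigenpolytope $P_{\overline{\mathbf k}}$ is, up to combinatorial type and labelling, the eigenpolytope $P_m = \conv(\mathcal{U}_m)$: appending the all-ones row to $U_m$ only lifts the configuration into the hyperplane $x_1 = 1$, changing neither the face lattice nor the labels the columns place on faces. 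By Theorem~\ref{thm: gale duality and designs} (see also Remark~\ref{rem:smallest design largest facet}), a minimal positively weighted extremal design $W$ corresponds to a facet $F$ of $P_m$, and from the discussion preceding Example~\ref{ex:weighted designs} the complement $[n]\setminus W$ is exactly the set of column indices $i\in[2^d]$ with $c(y_i)\in F$. Hence $|W| = 2^d - \#\{i : c(y_i)\in F\}$ (counting columns with multiplicity), so to upper bound $|W|$ it suffices to lower bound the number of column-labels carried by any facet of $P_m$.

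The one computation needed is the following. Since $P_m$ is full-dimensional in $\RR^{\binom{d}{m}}$, every facet of $P_m$ has dimension $\binom{d}{m}-1$ and therefore contains at least $\binom{d}{m}$ distinct vertices. By Lemma~\ref{lem:odd and even eigenpolytopes} (restated for the extremal case in Lemma~\ref{lem:maximal eigenpolytopes of cubes}), the $2^d$ columns of $U_m$ account for each vertex of $P_m$ exactly once when $m$ is odd (then $P_m$ is centrally symmetric with $2^d$ vertices) and exactly twice when $m$ is even (then $P_m$ has $2^{d-1}$ vertices, each carrying the two labels $c(y) = c(\ones - y)$). Consequently any facet of $P_m$ carries at least $\binom{d}{m}$ labels when $m$ is odd and at least $2\binom{d}{m}$ labels when $m$ is even, which gives
\[
|W| \;\le\; 2^d - \binom{d}{m}\ \ (m\text{ odd}), \qquad\qquad |W| \;\le\; 2^d - 2\binom{d}{m}\ \ (m\text{ even}).
\]

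It then remains to substitute the parity of $m$ dictated by $d \bmod 4$, exactly as recorded in Lemma~\ref{lem:maximal eigenpolytopes of cubes}. If $d \equiv 0 \pmod 4$ the unique last index is $m = d/2$, which is even, yielding case~(1); if $d \equiv 2 \pmod 4$ the unique last index is $m = d/2$, which is odd, yielding case~(3). If $d$ is odd the frequency order leaves a tie between $m = (d+1)/2$ and $m' = (d-1)/2$, which have opposite parities; since $\binom{d}{(d-1)/2} = \binom{d}{(d+1)/2}$ and $2\binom{d}{\cdot} > \binom{d}{\cdot}$, the even index always gives the smaller of the two bounds, which is what the $\min$ in cases~(2) and~(4) selects — for $d \equiv 1 \pmod 4$ the even index is $m' = (d-1)/2$, and for $d \equiv 3 \pmod 4$ it is $m = (d+1)/2$.

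I do not expect a serious obstacle: the substance of the statement was front-loaded into Lemma~\ref{lem:odd and even eigenpolytopes} and Lemma~\ref{lem:maximal eigenpolytopes of cubes} (identifying the eigenpolytopes, counting their vertices, and establishing the pairing $c(y) = \pm c(\ones - y)$ with no further identifications via the injectivity argument on the slice-$m$ incidence matrix). The only points requiring care are to keep the distinction between the number of \emph{vertices} of a facet and the number of column-\emph{labels} on it — this is where the factor of $2$ for even $m$ enters — and, for odd $d$, to handle the frequency-order tie by choosing the ordering that produces the stronger bound.
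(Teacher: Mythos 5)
Your argument is correct and is essentially the paper's intended proof: the corollary is read off from Theorem~\ref{thm: gale duality and designs} (via Remark~\ref{rem:smallest design largest facet}) together with Lemma~\ref{lem:maximal eigenpolytopes of cubes}, using that a facet of the $\binom{d}{m}$-dimensional extremal eigenpolytope has at least $\binom{d}{m}$ vertices, each carrying one column label when $m$ is odd and two when $m$ is even. Your explicit attention to the label-versus-vertex count (the source of the factor $2$) and to the tie-break for odd $d$ matches how the paper uses the bound, so there is nothing to correct.
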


Except for $d \equiv 2$ mod $4$, the upper bounds given in Corollary~\ref{cor:upper bounds for cubes} are strictly better than those in Theorem~\ref{thm:general upper bound} because of the doubling of columns in the matrix $U_m$. 
    %These upper bounds are $d - 2d_m$ as opposed to $d-d_m$ in Theorem~\ref{thm:general upper bound}. 
    Even so, the bounds in Corollary~\ref{cor:upper bounds for cubes} are not tight, and we will 
    improve them in the next subsection. For example in $Q_5$,  Corollary~\ref{cor:upper bounds for cubes} says that there is a positively weighted extremal design of size at most $12$ while in fact there is one of size $8$. This is because the eigenpolytope $P_2$ of $Q_5$ has facets with $12$ vertices, each given by two columns of 
    $U_2$, and so there is an extremal design of size $32-24 = 8$. Since $\dim(P_2) = 10$, 
    Corollary~\ref{cor:upper bounds for cubes} assigns $10$ vertices to each facet, each one doubled, so computes $32-20 = 12$ as the upper bound. The bound from  Theorem~\ref{thm:general upper bound} is $22$.

\subsection{Extremal designs and linear codes}
By Theorem~\ref{thm: gale duality and designs}, the smallest cardinality positively weighted extremal designs in $Q_d$ come from the facets of $P_m$ with the maximum number of vertices. Note that all elements of $\mathcal{U}_m$ are vertices of $P_m$. 
Here we use the theory of linear codes to improve the bounds in Corollary~\ref{cor:upper bounds for cubes}.
%We construct even stronger bounds on the size of a minimal extremal design through linear codes. 

\begin{definition}
Consider the Boolean field $\ZZ_2=\{0,1\}$ of integers mod $2$ and the 
 $\ZZ_2$-vector space $\{0,1\}^d$.
\begin{enumerate}
    \item A {\em linear code} $C \subseteq \{0,1\}^d$ is a subspace of 
    $\{0,1\}^d$. The {\em length} of the code $C$ is $d$ and its {\em dimension} is the dimension 
    of the subspace. 
    \item The {\em parity check matrix} of $C$ is a matrix $M \in \{0,1\}^{t \times d}$ such that 
$C = \ker(M)$. 
    \item The \emph{dual code} of $C = \ker(M)$ is the linear code $C^\perp := \textup{rowspan}(M)$. 
\end{enumerate}    
\end{definition}

We will rely heavily on the following result that connects linear codes to designs.

\begin{lemma}[Theorem 4.8 of \cite{cubescodes}] \label{lem: lin code check matrix}
Let $C = \ker(M)$ be a linear code in $\{0,1\}^d$. Then $C$ averages the eigenvector $\phi_x$, with  $\phi_x(y) = (-1)^{x^\top y}$, with equal weights if and only if $x \notin C^{\perp} \setminus \{0\} = \textup{rowspan}(M) \setminus \{0\}$.
\end{lemma}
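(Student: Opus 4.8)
The plan is to translate ``$C$ averages $\phi_x$ with equal weights'' into a condition on a single $\ZZ_2$-linear functional on the code $C$, and then invoke the elementary fact that a nonzero linear functional on a vector space over $\ZZ_2$ vanishes on exactly half of it. Throughout, recall that $C^\perp = \textup{rowspan}(M)$ by definition of the dual code, so it suffices to decide for which $x$ the equality $\tfrac{1}{|C|}\sum_{w\in C}\phi_x(w) = \tfrac{1}{2^d}\sum_{y\in\{0,1\}^d}\phi_x(y)$ holds.

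First I would dispose of the case $x = 0$: here $\phi_0 = \ones$, and by \eqref{eq: everything averages first eigenspace} both sides of the averaging equation equal $1$, so $C$ averages $\phi_0$; since $0 \notin C^\perp\setminus\{0\}$, the claimed equivalence holds in this case. For $x \neq 0$ the eigenvalue $\lambda_{|x|} = 1 - 2|x|/d$ attached to $\phi_x$ is not $1$, so by \eqref{eq:0 average} the right-hand side is $0$, and $C$ averages $\phi_x$ with equal weights if and only if
\[
\sum_{w\in C}\phi_x(w) \;=\; \sum_{w\in C}(-1)^{x^\top w} \;=\; 0.
\]

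Next I would observe that $\ell_x\colon C \to \ZZ_2$, $w \mapsto x^\top w \bmod 2$, is $\ZZ_2$-linear. There are two cases. If $\ell_x \equiv 0$, equivalently $x \in C^\perp$, then the sum above equals $|C| > 0$, so $C$ does not average $\phi_x$. If $\ell_x \not\equiv 0$, equivalently $x \notin C^\perp$, then $\ell_x$ is onto $\ZZ_2$, so $\ker\ell_x$ has index $2$ in $C$; hence exactly $|C|/2$ elements $w\in C$ have $x^\top w$ even and $|C|/2$ have it odd, making the signed sum $0$, so $C$ averages $\phi_x$. Combining with the $x=0$ case, $C$ averages $\phi_x$ with equal weights if and only if $x \notin C^\perp\setminus\{0\} = \textup{rowspan}(M)\setminus\{0\}$. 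I do not expect a genuine obstacle here; the only point requiring care is the bookkeeping that separates $x=0$ from the nonzero elements of $C^\perp$ so that the ``$\setminus\{0\}$'' in the statement comes out correctly, the substantive content being just the half-splitting of $C$ by the nonzero functional $\ell_x$.
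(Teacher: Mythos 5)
Your proof is correct. Note that the paper itself does not prove this lemma --- it is imported verbatim as Theorem 4.8 of \cite{cubescodes} --- so there is no in-paper argument to match; what you have written is the standard self-contained justification: reduce ``averages with equal weights'' to the vanishing of the character sum $\sum_{w\in C}(-1)^{x^\top w}$ (using \eqref{eq: everything averages first eigenspace} for $x=0$ and \eqref{eq:0 average} for $x\neq 0$), and then use that a nonzero $\ZZ_2$-linear functional on $C$ has kernel of index $2$, so the sum is $|C|$ when $x$ annihilates $C$ and $0$ otherwise. The only point worth making explicit is the identification you use implicitly, namely that the annihilator $\{x : x^\top w \equiv 0 \bmod 2 \ \forall w \in C\}$ coincides with $\textup{rowspan}(M)$, which the paper takes as the definition of $C^\perp$; this follows since $\textup{rowspan}(M)$ is contained in the annihilator of $\ker(M)$ and both have dimension $\operatorname{rank}(M)$, so your use of ``$x\in C^\perp$'' as ``$\ell_x\equiv 0$ on $C$'' is legitimate.
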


If the matrix $M$ is chosen so that its row span is contained in the last eigenspace of $Q_d$ 
in any ordering, then by Lemma~\ref{lem: lin code check matrix}, $C = \ker(M)$ would be a combinatorial extremal design in $Q_d$ for that ordering. In fact, by Lemma~\ref{lem: comb complements}, both $C$ and its complement 
in $\{0,1\}^d$ would be combinatorial extremal designs in $\{0,1\}^d$. Equivalently, $C$ partitions the vertices of the extremal eigenpolytope into two faces.

\begin{lemma} \label{lem:there are combinatorial extremal designs}
The graph $Q_d$ has combinatorial extremal designs for any ordering.  
\end{lemma}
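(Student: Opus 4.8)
The plan is to exhibit, for an arbitrary eigenspace ordering, an explicit linear code that serves as the required combinatorial extremal design, using the discussion preceding the lemma together with Lemma~\ref{lem: lin code check matrix}. First I would fix an ordering of the $d+1$ eigenspaces of $Q_d$. Since the eigenvalues $\lambda_i = 1-2i/d$, $i=0,\ldots,d$, are pairwise distinct and the eigenspace of $\lambda_0 = 1$ is $\spanset\{\ones\}$, which by convention is always ordered first, the last eigenspace in the chosen ordering is the eigenspace of $\lambda_i$ for some $i \in \{1,\ldots,d\}$; call it $\L$. By Lemma~\ref{lem:computational check}(3), it suffices to produce a proper nonempty $W \subseteq \{0,1\}^d$ with $\phi^\top \ones_W = 0$ for every eigenvector $\phi$ lying in an eigenspace other than $\spanset\{\ones\}$ and $\L$; equivalently, taking the uniform weights $1/|W|$, we need $W$ to average $\phi_x$ for every $x \in \{0,1\}^d$ with $|x| \neq i$.

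Next I would build such a $W$ from a one-row parity check matrix. Choose any $x_0 \in \{0,1\}^d$ with Hamming weight $|x_0| = i$ (possible since $1 \le i \le d$), let $M = x_0^\top \in \{0,1\}^{1 \times d}$, and set $C := \ker(M) = \{y \in \{0,1\}^d : x_0^\top y \equiv 0 \bmod 2\}$. Then $C$ is a linear code of dimension $d-1$, so $1 < |C| = 2^{d-1} < 2^d$, and its dual code is $C^\perp = \textup{rowspan}(M) = \{0, x_0\}$, whose unique nonzero codeword $x_0$ has weight exactly $i$. Hence $\textup{rowspan}(M) \setminus \{0\} \subseteq \J_{d,i}$, i.e., the row span of the parity check matrix sits inside the last eigenspace $\L$ in the sense of the paragraph before the lemma. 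Applying Lemma~\ref{lem: lin code check matrix}, $C$ averages $\phi_x$ with equal weights for every $x \notin C^\perp \setminus \{0\} = \{x_0\}$, and in particular for every $x$ with $|x| \neq i$. Therefore $W := C$, taken with uniform weights, averages every eigenspace except possibly $\L$, so it is a combinatorial extremal design of $Q_d$ in the chosen ordering. (As noted in the text, Lemma~\ref{lem: comb complements} then also makes $\{0,1\}^d \setminus C$ such a design, so $C$ partitions the vertices of the corresponding extremal eigenpolytope into two faces.)

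I do not expect a genuine obstacle here: the statement is essentially immediate once one observes that the $1$-dimensional code $\{0, x_0\}$ is a linear code all of whose nonzero words have the prescribed weight $i$, and that $C = \ker(x_0^\top)$ is then forced to average every $\phi_x$ with $x \ne x_0$ by Lemma~\ref{lem: lin code check matrix}. The only points requiring care are the bookkeeping that ties the ``last eigenspace'' in an arbitrary ordering to a Hamming-weight index $i \in \{1,\ldots,d\}$, and checking that $C$ is a proper nonempty vertex subset so that it is a bona fide combinatorial design in the sense of Definition~\ref{def:graphical designs} and Lemma~\ref{lem:computational check}(3).
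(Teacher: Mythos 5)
Your proposal is correct and follows essentially the same route as the paper: pick any $x$ of Hamming weight $i$ matching the last eigenspace, take $W = \ker(x^\top)$, and invoke Lemma~\ref{lem: lin code check matrix}. The extra bookkeeping you add (computing $C^\perp = \{0,x_0\}$, checking $W$ is proper and nonempty) is fine but not a different argument.
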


\begin{proof}
Let $\L$ be the last eigenspace of $Q_d$ in a given ordering. Then $\L = \{\phi_x: |x| = i\}$ for some $ i \in [d]$.
 Choose any $x\in \cal J_{d,i}$ and consider $W = \ker([x^\top])$. 
By Lemma \ref{lem: lin code check matrix}, $W$ averages (with equal weights) all eigenspaces of $Q_d$ except $\L$.
\end{proof}

\catherine{ This is essentially \cite[Theorem 3.3]{Golubev}, though that result emphasized that taking any vector as a check matrix provides an extremal design in {\em some} ordering, and here we emphasize that there is an extremal design for {\em any} ordering. Using Lemma~\ref{lem: lin code check matrix} it is possible to prove that the minimum cardinality extremal designs in Theorem~\ref{thm:cut polytope min designs} are combinatorial.}

We now return to frequency ordering and will prove in Theorem~\ref{thm:d=2 mod 4 theorem} that when $d \equiv 2$ mod $4$, there are positively weighted extremal designs of smallest size that are combinatorial. In the other cases, it is not clear whether the smallest positively weighted extremal designs in $Q_d$ are combinatorial. 

\begin{lemma} \label{lem: cube odd polytope max facet}
If $i$ is odd, then the eigenpolytope $P_i$ of $Q_d$ has at least $2 \binom{d}{i}$ facets 
each containing $2^{d-1}$ vertices. No facet of $P_{i}$ has more than $2^{d-1}$ vertices.
\end{lemma}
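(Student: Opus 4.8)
The plan is to deduce both assertions from the central symmetry of $P_i$ established in Lemma~\ref{lem:odd and even eigenpolytopes}(1), using the coordinate hyperplanes of $\RR^{\binom{d}{i}}$ to exhibit the promised facets. Throughout, $1 \le i \le d-1$ with $i$ odd, so by that lemma $P_i$ is a full-dimensional, centrally symmetric polytope whose $2^d$ vertices are the distinct columns $c(y)$, $y\in\{0,1\}^d$, paired antipodally by $c(\ones-y)=-c(y)$; since $y\mapsto\ones-y$ is fixed-point free, these form $2^{d-1}$ antipodal pairs.

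For the upper bound, let $F$ be a facet with supporting inequality $\langle a,v\rangle\le b$, valid on $P_i$ with equality exactly on $F$. Because $0$ is the center of $P_i$ and hence interior, while $F$ is a proper face, we get $0=\langle a,0\rangle<b$. Consequently no antipodal pair $\{c(y),-c(y)\}$ can lie entirely in $F$, since $\langle a,c(y)\rangle=\langle a,-c(y)\rangle=b$ would force $b=0$. Thus $F$ meets each of the $2^{d-1}$ antipodal pairs at most once, giving $|F\cap V(P_i)|\le 2^{d-1}$.

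For the lower bound, for each $x\in\J_{d,i}$ I would consider the coordinate functional $v\mapsto v_x$. Every vertex satisfies $c(y)_x=(-1)^{x^\top y}\in\{-1,1\}$, so $v_x\le 1$ and $-v_x\le 1$ are valid inequalities for $P_i$, cutting out faces $F_x^{+}=P_i\cap\{v_x=1\}$ and $F_x^{-}=P_i\cap\{v_x=-1\}$ whose vertex sets are $\{c(y):x^\top y\text{ even}\}$ and $\{c(y):x^\top y\text{ odd}\}$. Since $x\ne \zeros$, the map $y\mapsto x^\top y\bmod 2$ is onto $\ZZ_2$, so each of $F_x^{+},F_x^{-}$ has exactly $2^{d-1}$ vertices. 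To see these are facets, I would use that $i$ odd forces $x^\top(\ones-y)=i-x^\top y$ to be even whenever $x^\top y$ is odd, so $c(y)=-c(\ones-y)$ with $\ones-y$ a vertex of $F_x^{+}$; hence the vertex set of $F_x^{+}$ contains $\pm c(y)$ for every $y$, so its linear span is $\spanset\{c(y):y\in\{0,1\}^d\}=\RR^{\binom{d}{i}}$ (the columns of $U_i$ span, as the rows of $U$ form a basis). A point set lying on the affine hyperplane $\{v_x=1\}$ whose linear span is all of $\RR^{\binom{d}{i}}$ has affine hull of dimension $\binom{d}{i}-1=\dim P_i-1$, so $F_x^{+}$ is a facet, and likewise $F_x^{-}$. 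The $2\binom{d}{i}$ faces $F_x^{\pm}$ are pairwise distinct: $F_x^{+}$ and $F_{x'}^{+}$ share a vertex set only if $\ker(x^\top)=\ker(x'^\top)$ over $\ZZ_2$, i.e. $x=x'$, and $F_x^{+}=F_{x'}^{-}$ is impossible since $\zeros$ lies in the index set of the former but not the latter (as $x'^\top\zeros=0$ is even, while $i$ is odd rules out $\zeros$ being in $F_{x'}^-$ via the antipodal relabeling). This produces at least $2\binom{d}{i}$ facets each with $2^{d-1}$ vertices.

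The step I expect to require the most care is the dimension count certifying that $F_x^{\pm}$ is genuinely a facet and not merely a lower-dimensional face: the crucial point, special to $i$ odd, is that the vertex set of $F_x^{+}$ is closed under negation after the relabeling $y\leftrightarrow\ones-y$, which upgrades ``its vertices affinely span a hyperplane'' to the stronger ``its vertices linearly span $\RR^{\binom{d}{i}}$.'' Everything else is an immediate consequence of central symmetry together with the fact that $U$, and hence each row block $U_i$, has full rank.
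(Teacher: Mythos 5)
Your proof is correct, and for the lower bound it takes a genuinely different route from the paper even though it exhibits exactly the same facets. The paper argues through its design--face machinery: for each $x \in \J_{d,i}$ the code $C=\ker(x^\top)$ averages every eigenspace except $\L_i$ by Lemma~\ref{lem: lin code check matrix}, so by Theorem~\ref{thm: gale duality and designs} the complement $\{0,1\}^d\setminus C$ indexes a face of $P_i$, as does $C$ itself by Lemma~\ref{lem: comb complements}; since each of these faces already has $2^{d-1}$ vertices, which central symmetry shows is the most any facet can carry, each must in fact be a facet. You instead work directly inside $P_i$: the coordinate inequalities $\pm v_x\le 1$ are valid, the faces they cut out have vertex sets $\{c(y): x^\top y \text{ even}\}$ and its complement (the same sets as the paper's $C$ and $\{0,1\}^d\setminus C$), and you certify facetness by a rank count --- oddness of $i$ makes each vertex set closed under negation via the relabeling $y\leftrightarrow\ones-y$, so it linearly spans $\RR^{\binom{d}{i}}$ while lying on an affine hyperplane missing the origin, forcing affine dimension $\binom{d}{i}-1$. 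Your route is self-contained (no appeal to the coding lemma from \cite{cubescodes} or to the Gale correspondence) and it establishes facetness independently of the $2^{d-1}$ upper bound, whereas the paper's ``maximum possible vertex count, hence facet'' step leans on that bound; the paper's route is shorter given the machinery already in place, stays aligned with the design--code--face dictionary of Section~\ref{sec:cubes}, and its template extends to higher-dimensional equidistant codes (as in Theorem~\ref{thm: Qd min codes} and the corollary following it), where no single coordinate hyperplane description of the resulting face is available. Your spelled-out central-symmetry argument for the upper bound is the same idea the paper invokes implicitly. One small omission: you verify $F_x^+\neq F_{x'}^+$ and $F_x^+\neq F_{x'}^-$ but not $F_x^-\neq F_{x'}^-$; that case follows from the identical kernel argument (or from $F_x^-=-F_x^+$ and central symmetry), so nothing is at stake.
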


\begin{proof} 
By Lemma \ref{lem:odd and even eigenpolytopes}, $P_i$ is centrally symmetric. Therefore, the maximum number of vertices that can lie on a single facet of $P_i$ is $2^d/2 = 2^{d-1}$. We will use Gale duality and coding theory to exhibit $\binom{d}{i}$ facets which contain this many vertices.
Let $x \in \cal J_{d,i}$, and consider the parity check matrix $M = [x^\top]$.  By Lemma \ref{lem: lin code check matrix}, the code $C = \ker(M)$ averages all eigenspaces of $Q_d$ except for $\L_i$.  By Theorem \ref{thm: gale duality and designs}, $\{0,1\}^d \setminus C$ are the vertices on a face of $P_i$. Since the design is combinatorial, $C$ also indexes a face of $P_i$ by Lemma~\ref{lem: comb complements}. Since $|C| = 2^{d-1}$, $|\{0,1\}^d \setminus C| = 2^{d-1}$. Therefore, these faces contains the maximum possible number of vertices, and hence must be a facets. There are $\binom{d}{i}$ choices of the vector $x$. Each provides two unique facets --  $\{0,1\}^d \setminus C$ is not a linear code since it does not contain 0. \end{proof}

The code $ \ker(\ones^\top)$ is known as the {\em single parity check code}. Linear codes are said to be equivalent if they only differ by a permutation of coordinates, so the codes $\ker(x^\top)$ are equivalent for any $x \in \cal J_{d,i}$. In a sense, these codes are generalizations of the single parity check code, 
%though we cannot find a reference to them in coding theory, 
but they typically will have poor distance if $x\neq \ones$.

If the last eigenspace in frequency order is indexed by an even Hamming weight, then we can always do better.  
\begin{lemma} \label{lem: even wins}
If $i$ is even, then $Q_d$ has a combinatorial design that averages all but $\L_i$ with strictly fewer than $2^{d-1}$ vertices.
\end{lemma}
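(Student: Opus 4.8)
\emph{Proof plan.} Throughout, $i$ denotes the index of the last eigenspace of $Q_d$ in frequency order, so $\Lambda_i = \textup{span}\{\phi_x : |x| = i\}$ is a ``middle'' slice; the hypothesis is that this $i$ is even, and in particular $2 \le i \le \lceil d/2\rceil$ and $d \ge 3$. The plan is to produce an explicit combinatorial extremal design as the kernel of a well-chosen parity check matrix, via Lemma~\ref{lem: lin code check matrix}. Concretely, I will build a linear code $C \subseteq \{0,1\}^d$ of dimension $d-2$ whose dual code $C^{\perp}$ has every nonzero codeword of Hamming weight exactly $i$; then Lemma~\ref{lem: lin code check matrix} immediately gives that $C$ averages $\phi_z$ with uniform weights for every $z$ with $|z| \ne i$, so $C$ averages every eigenspace of $Q_d$ other than $\Lambda_i$, while $|C| = 2^{d-2} < 2^{d-1}$.

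The one arithmetic point that actually uses the hypothesis is the inequality $3i/2 \le d$. Since $i = d/2$ when $d$ is even, and $i$ is the even one of $\{(d-1)/2,(d+1)/2\}$ when $d$ is odd, a short check on the residue of $d$ modulo $4$ gives $3i/2 \le d$ in every case, with equality only for $d = 3$. Granted this, I would construct $C$ as follows: choose a subset $S \subseteq [d]$ with $|S| = 3i/2$ and split it into three disjoint parts $A, B, B'$ each of size $i/2$ (possible since $i$ is even). Set $x := \ones_{A \cup B}$ and $y := \ones_{A \cup B'}$, so $|x| = |y| = i$ and $|\supp(x) \cap \supp(y)| = i/2$, whence $|x+y| = 2i - 2\cdot(i/2) = i$ as well. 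Let $M \in \{0,1\}^{2 \times d}$ be the matrix with rows $x^\top$ and $y^\top$, and put $C := \ker(M)$. Since $x$ and $y$ are distinct nonzero vectors over $\ZZ_2$ they are independent, so $M$ has rank $2$, $\dim C = d-2$, and $C^{\perp} = \textup{rowspan}(M) = \{0, x, y, x+y\}$, whose three nonzero words all have weight $i$.

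Finally I would invoke Lemma~\ref{lem: lin code check matrix}: for every $z \in \{0,1\}^d$ with $|z| \ne i$ we have $z \notin C^{\perp}\setminus\{0\}$, so $C$ averages $\phi_z$ with equal weights $1/|C|$. The eigenspaces of $Q_d$ other than $\Lambda_i$ are exactly those spanned by $\{\phi_z : |z| = j\}$ over $j \ne i$ (including $\Lambda_1 = \textup{span}\{\phi_{\zeros}\}$, since $i \ne 0$), so $C$ with uniform weights is a combinatorial design averaging all of them, i.e.\ a combinatorial extremal design in the ordering that puts $\Lambda_i$ last, of size $2^{d-2} < 2^{d-1}$. (If one prefers the polytope language, $\{0,1\}^d \setminus C$ then indexes a face of $P_i$, and by Lemma~\ref{lem: comb complements} so does $C$.)

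The only genuine obstacle is the first step: one needs a dimension-$2$, constant-weight-$i$ dual code to fit inside $\{0,1\}^d$, and this is precisely what being an even middle slice guarantees. For $i$ too large — for instance $Q_5$ with $\Lambda_4$ ordered last, where $P_4$ is the $5$-demicube and its largest facet has only $2^{d-2}$ vertices — no such code exists and the smallest extremal design has size $2^{d-1}$, so the middle-slice hypothesis cannot be dropped.
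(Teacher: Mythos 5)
Your construction is essentially the paper's own proof: a $2\times d$ parity check matrix whose two rows lie in $\mathcal{J}_{d,i}$ and overlap in exactly $i/2$ support positions, so the row span is constant weight $i$ and Lemma~\ref{lem: lin code check matrix} makes its kernel, of size $2^{d-2}<2^{d-1}$, a combinatorial design averaging every eigenspace except $\Lambda_i$. Your explicit verification that $3i/2\le d$ (together with the $Q_5$, $\Lambda_4$ example showing the construction, and indeed the conclusion, fails without it) is a sound refinement of the paper's terse ``this is always possible,'' which is only justified when $\Lambda_i$ is a middle slice as in the lemma's intended use.
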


\begin{proof}
Consider a $2 \times d$ check matrix $M = [ x_1 \,\, x_2 ]^\top$ with $x_1, x_2 \in \cal J_{d,i}$ and $\supp x_1 \cap \supp x_2 = i/2$.  This is always possible; here is an example for $d = 9$, $ i = 4 $:
\[ \begin{bmatrix}
 1 & 1 & 1 & 1 & 0 & 0 & 0 & 0 & 0\\
 1 & 1 & 0 & 0 & 1 & 1 & 0 & 0 & 0
\end{bmatrix}
\]
Then, the row span of $M$ is contained in $\cal J_{d,i}$. By Lemma \ref{lem: lin code check matrix},  $\ker(M)$ averages all eigenspaces of $Q_d$ other than $\L_i$,  and $|\ker(M)| = 2^{d-2}$. 
\end{proof}

We now use the tools we have built so far to find or bound the size of the smallest positively weighted extremal designs in $Q_d$. Our constructions yield combinatorial designs based on linear codes. We begin by considering the case of $d \equiv 2$ mod $4$ for which Theorem~\ref{thm:d=2 mod 4 theorem} provides an optimal answer. 

\begin{theorem} \label{thm:d=2 mod 4 theorem}
Let $d \equiv 2 \mod 4$.  A minimum cardinality positively weighted extremal design of $Q_d$ in frequency order consists of $2^{d-1}$ vertices and is combinatorial.  Any code $C = \ker(x^\top)$ for $x\in \cal J_{d,d/2}$ attains this minimum.
\end{theorem}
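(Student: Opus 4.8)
The plan is to feed the structural facts about the extremal eigenpolytope of $Q_d$ in frequency order into the Gale-duality dictionary of Theorem~\ref{thm: gale duality and designs}, and then to use the coding input of Lemma~\ref{lem: lin code check matrix} to pin down the weights. Since $d \equiv 2 \bmod 4$, the index $m := d/2$ is an odd integer, the eigenvalue $\lambda_{d/2} = 0$ is the unique eigenvalue of smallest absolute value, and so $\Lambda_{d/2} = \spanset\{\phi_x : |x| = d/2\}$ is the last eigenspace in frequency order. The extremal eigenpolytope $P_{\overline{\mathbf k}}$ is combinatorially the eigenpolytope $P_{d/2} = \conv(\mathcal U_{d/2})$, which by Lemma~\ref{lem:odd and even eigenpolytopes}(1) (as $d/2$ is odd) is a centrally symmetric polytope whose $2^d$ columns are all distinct vertices, with $c(\ones - y) = -c(y)$ for every $y$.

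First I would transfer the minimization to the polytope: by Theorem~\ref{thm: gale duality and designs} together with Remark~\ref{rem:smallest design largest facet}, the minimal positively weighted extremal designs of $Q_d$ are the complements $\{0,1\}^d \setminus \Vert(F)$ of the facets $F$ of $P_{d/2}$, and the minimum cardinality ones correspond to the facets carrying the most vertices. Applying Lemma~\ref{lem: cube odd polytope max facet} with $i = d/2$ (odd), no facet of $P_{d/2}$ has more than $2^{d-1}$ vertices, while the codes $\ker(x^\top)$, $x \in \J_{d,d/2}$, produce facets with exactly $2^{d-1}$ vertices; hence the minimum cardinality of a positively weighted extremal design is $2^d - 2^{d-1} = 2^{d-1}$. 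To see the bound is met by the advertised code, note that $C = \ker(x^\top)$ has $C^\perp = \{0,x\}$, so by Lemma~\ref{lem: lin code check matrix} it averages, with uniform weights, every $\phi_z$ with $z \ne x$, hence every eigenspace except $\Lambda_{d/2}$; thus $C$ is a combinatorial extremal design, and $\dim_{\ZZ_2} C = d-1$ gives $|C| = 2^{d-1}$.

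To obtain the stronger assertion that \emph{every} minimum cardinality positively weighted extremal design is combinatorial, let $W$ be such a design with nonnegative weights $a$, $\supp(a) = W$. Since $a$ is orthogonal to every $\phi_z$ with $|z| \notin \{0, d/2\}$, it lies in $\spanset\{\ones\} \oplus \Lambda_{d/2}$; write $a = c\,\ones + \sum_{|x| = d/2} b_x \phi_x$, where $c = 2^{-d}\sum_y a(y) > 0$. Because $d/2$ is odd, $\phi_x(\ones - y) = (-1)^{|x|}\phi_x(y) = -\phi_x(y)$, whence $a(y) + a(\ones - y) = 2c$ for all $y$. On the polytope side, the facet $F := \{0,1\}^d \setminus W$ of $P_{d/2}$ is disjoint, as a vertex set, from its antipode $-F$ (a common vertex would put the supporting hyperplane of $F$ through the center of $P_{d/2}$); since $|F| = |{-F}| = 2^{d-1}$, the two facets partition the $2^d$ vertices, which on index sets reads $W = \{0,1\}^d \setminus F = \ones - F$. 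Thus $W$ contains exactly one of $\{y, \ones - y\}$ for each $y$, and combined with $a(y) + a(\ones - y) = 2c$ and $a \ge 0$ this forces $a = 2c\,\ones_W$, so $W$ is combinatorial.

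The bookkeeping identifications ($P_{\overline{\mathbf k}} \cong P_{d/2}$, vertices versus columns, etc.) are routine, and the estimate $|W| = 2^{d-1}$ is essentially Lemmas~\ref{lem:odd and even eigenpolytopes}, \ref{lem: cube odd polytope max facet} and~\ref{lem: lin code check matrix} stitched together. The one genuinely load-bearing point — and the place I expect the real work to sit — is the last paragraph: that maximality of the facet ($|F| = 2^{d-1}$) plus central symmetry forces $W$ to meet every antipodal pair exactly once, which is exactly what makes the identity $a(y)+a(\ones-y)=2c$ collapse the weights to uniform. The hypothesis $d \equiv 2 \bmod 4$ (equivalently, $d/2$ odd) enters only through the sign $\phi_x(\ones - y) = -\phi_x(y)$, but it is indispensable there: for $d \equiv 0 \bmod 4$ that sign is $+1$ and both the argument and the conclusion change.
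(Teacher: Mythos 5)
Your first two paragraphs are exactly the paper's argument: Lemma~\ref{lem: cube odd polytope max facet} (central symmetry of $P_{d/2}$ caps every facet at $2^{d-1}$ vertices, and the codes $\ker(x^\top)$ realize facets of that size) combined with Theorem~\ref{thm: gale duality and designs} gives the minimum cardinality $2^d-2^{d-1}=2^{d-1}$, and Lemma~\ref{lem: lin code check matrix} shows each $C=\ker(x^\top)$, $x\in\J_{d,d/2}$, is a combinatorial design attaining it. Your third paragraph, however, goes beyond the paper: the paper proves only the existential reading of ``is combinatorial'' (cf.\ the sentence preceding the theorem, ``there are positively weighted extremal designs of smallest size that are combinatorial''), whereas you show that \emph{every} minimum-cardinality positively weighted extremal design has forced uniform weights. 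That extra argument checks out: $a\in\spanset\{\ones\}\oplus\Lambda_{d/2}$ by orthogonality to the intermediate eigenspaces, the parity of $d/2$ gives $a(y)+a(\ones-y)=2c$, and since a facet of a full-dimensional centrally symmetric polytope cannot contain an antipodal pair (its supporting hyperplane would pass through the interior point $0$), the maximal facet $F$ and $-F$ partition the vertices, so $W$ meets each pair $\{y,\ones-y\}$ exactly once and the weights collapse to $2c\,\ones_W$. So your proposal is correct, matches the paper on the stated bounds and constructions, and in addition settles the stronger, universal form of the combinatoriality claim that the paper's proof does not address.
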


\begin{proof}
The last eigenspace of $Q_d$ by frequency is $\L_{d/2}$, and $d/2$ is odd.  By Lemma \ref{lem: cube odd polytope max facet}, the extremal eigenpolytope $P_{d/2}$ has facets with $2^{d-1}$ vertices, the maximum possible, and 
these vertices are all elements of $\cal U_{d/2}$. Therefore, a minimum positively weighted 
extremal design in $Q_d$ consists of $2^{d-1}$ elements by Theorem~\ref{thm: gale duality and designs}.  Every linear code of the form $C = \ker(x^\top)$ where $x \in \mathcal{J}_{d,d/2}$ is a combinatorial design that achieves this minimum by Lemma~\ref{lem: lin code check matrix}.
\end{proof}

Next we consider $d \not\equiv 2 \mod 4.$ In these cases, it follows from Lemmas~\ref{lem: cube odd polytope max facet} and \ref{lem: even wins} that when there is a tie for the last eigenspace, 
the smallest extremal designs are going to come from the extremal eigenpolytope with an even index. Let $m$ be this index in the rest of this section. Concretely, 
$$ 
m = \left\{ \begin{array}{ll} 
d/2 & \textup{ if } d \equiv 0 \textup{ mod } 4 \\
(d-1)/2 & \textup{ if } d \equiv 1 \textup{ mod } 4 \\
(d+1)/2 & \textup{ if } d \equiv 3 \textup{ mod } 4 .\\
\end{array} \right.
$$

We formalize the comments stated after Lemma~\ref{lem: lin code check matrix} for the index $m$. 

\begin{corollary}\label{cor:constant weight}
If the non-zero vectors in the row span of $M$ are entirely of weight $m$, then the linear code
$C = \ker (M)$ is an extremal combinatorial design in $Q_d$.
\end{corollary}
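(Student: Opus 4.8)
The plan is to read the statement off directly from Lemma~\ref{lem: lin code check matrix} after identifying the eigenspaces of $Q_d$ with the Hamming weight slices of the cube. First I would recall that for each $i \in \{0, 1, \ldots, d\}$ the eigenspace $\Lambda_i$ of $Q_d$ (with eigenvalue $\lambda_i = 1 - 2i/d$) has the orthogonal eigenbasis $\{\phi_x : x \in \cal J_{d,i}\}$ with $\phi_x(y) = (-1)^{x^\top y}$; thus the basis eigenvectors of $\Lambda_i$ are exactly the $\phi_x$ with $|x| = i$, and for $i = 0$ this is the single vector $\phi_{\zeros} = \ones$ spanning $\spanset\{\ones\}$.

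Next, assume every non-zero vector of $\textup{rowspan}(M) = C^\perp$ has Hamming weight exactly $m$. Then for each $i \neq m$ and each $x$ with $|x| = i$ we have $x \notin C^\perp \setminus \{\zeros\}$ (the case $i=0$, $x = \zeros$, included). By Lemma~\ref{lem: lin code check matrix}, $C = \ker(M)$ averages each such $\phi_x$ with equal weights, and the relevant weight vector is $\tfrac{1}{|C|}\ones_C$ in every instance, independent of $x$. Since $\{\phi_x : |x| = i\}$ is a basis of $\Lambda_i$, it follows that the uniform assignment of weight $\tfrac{1}{|C|}$ to each element of $C$ makes $C$ average $\Lambda_i$ for every $i \neq m$; that is, $C$ is a combinatorial design of $Q_d$ averaging all eigenspaces except $\Lambda_m$.

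Finally I would check that $\Lambda_m$ is the last eigenspace of $Q_d$ in frequency order for the index $m$ fixed just before the corollary: when $d$ is even, $m = d/2$ and $\lambda_{d/2} = 0$ is the unique eigenvalue of least absolute value; when $d$ is odd, the eigenvalues $\lambda_{(d-1)/2} = 1/d$ and $\lambda_{(d+1)/2} = -1/d$ are tied for least absolute value, and $m$ is precisely the one of these two indices chosen so that $\Lambda_m$ is placed last. Hence $C$ averages every eigenspace of $Q_d$ but the last, so $C$ is an extremal combinatorial design. I expect no genuine obstacle here: the entire argument is bookkeeping on top of Lemma~\ref{lem: lin code check matrix}, and the only points requiring a moment's care are that the weight-$m$ codewords of $C^\perp$ lie inside $\Lambda_m$ and inside no other eigenspace, and that the tie-breaking in the frequency order is the one under which $m$ was defined.
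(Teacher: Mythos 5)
Your proposal is correct and follows essentially the same route as the paper: both reduce the claim to Lemma~\ref{lem: lin code check matrix}, observing that the non-zero elements of $\textup{rowspan}(M)$ lie in $\mathcal{J}_{d,m}$, so the only eigenvectors not averaged (uniformly) by $C$ lie in $\Lambda_m$, which is (a valid choice of) the last eigenspace. Your extra bookkeeping --- the $i=0$ case and the tie-breaking when $d$ is odd --- is sound but not a different argument.
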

\begin{proof}
By Lemma \ref{lem: lin code check matrix}, the code $C$ averages all eigenvectors of $Q_d$ except for those indexed by non-zero elements in the row span of $M$. Since these elements are contained in $\cal J_{d,m}$, the corresponding eigenvectors all lie in $\Lambda_m$. Hence $C$ is an extremal combinatorial design of $Q_d$.
\end{proof}

Row spans as in Corollary~\ref{cor:constant weight}, 
in which all non-zero elements have the same Hamming weight,
are called {\em linear equidistant codes} or \emph{constant weight linear codes} \cite{BlakeMullinCodingTheory}. These codes have been completely classified by Bonisoli's theorem \cite{Bonisoli}; we refer the reader also to \cite{WardDivisibleCodes}. We first recall some facts about linear codes.  The binary \emph{Hamming code} $H_r \subset \{0,1\}^{2^r -1}$ \cite{HammingOG} is the linear code whose check matrix $M_r \in \{0,1\}^{r \times (2^r -1)}$ has columns consisting of the binary expansions of the digits $\{1,\ldots, 2^r-1\}$. For instance, the check matrix of $H_3 \subset \{0,1\}^7$ is  \[M_3= \begin{bmatrix} 
1& 0 & 1 & 0& 1 &0& 1 \\
 0 & 1 & 1 & 0 & 0 & 1 & 1 \\
 0 & 0 & 0 & 1 & 1 & 1 & 1 \\
\end{bmatrix}. \]

The dual of the Hamming code $H_r^\perp$, i.e., the row span of $M_r$, is called the {\em simplex code}, so named because its vectors form the vertex set of a regular $(2^r-1)$-simplex. Every non-zero element in the simplex code $H_r^\perp$ has weight $2^{r-1}$. 

\begin{theorem}[\cite{Bonisoli}]
If $C$ is a $r$-dimensional linear equidistant code, then $C$ is equivalent to concatenated copies of the simplex code $H_r^\perp$, possibly with additional zero coordinates. 
\end{theorem}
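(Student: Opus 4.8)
The plan is to work with a generator matrix of $C$ and to reduce the theorem to the nonsingularity over $\QQ$ of the point--hyperplane incidence matrix of the projective space $\mathrm{PG}(r-1,2)$.

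First I would strip off the trivial coordinates: a coordinate on which every codeword of $C$ vanishes contributes exactly an ``additional zero coordinate'' in the conclusion, so we may assume that no coordinate of $C$ is identically $0$. Fix an $r\times n$ matrix $G$ over $\ZZ_2$ whose rows form a basis of $C$; then the codewords are precisely the vectors $a^\top G$ for $a\in\ZZ_2^r$, and coordinate $i$ of $a^\top G$ equals $\langle a,g_i\rangle$, where $g_i$ is the $i$th column of $G$. Since no coordinate is identically zero, no $g_i$ is the zero vector, so each column of $G$ is one of the $2^r-1$ nonzero vectors of $\ZZ_2^r$. For each nonzero $v\in\ZZ_2^r$ let $n_v$ be the number of columns of $G$ equal to $v$, so $\sum_{v\neq 0}n_v=n$. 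The weight of $a^\top G$ is then $\sum_{v:\,\langle a,v\rangle=1}n_v$, and the equidistance hypothesis says this equals a constant $w$ for all nonzero $a$; equivalently $\sum_{0\neq v:\,\langle a,v\rangle=0}n_v=n-w$ for every nonzero $a$.

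The key step is to read these equations as the single linear system $B\mathbf{n}=(n-w)\mathbf{1}$ over $\QQ$, where $\mathbf{n}=(n_v)_{v\neq 0}$ and $B$ is the $(2^r-1)\times(2^r-1)$ matrix indexed by the nonzero vectors of $\ZZ_2^r$ with $B_{a,v}=1$ if $\langle a,v\rangle=0$ and $0$ otherwise --- the incidence matrix of points against hyperplanes of $\mathrm{PG}(r-1,2)$. I would then show $B$ is invertible over $\QQ$. Letting $H'=\big((-1)^{\langle a,v\rangle}\big)_{a,v\neq 0}$ be the character (Hadamard) matrix restricted to the nonzero indices, one has $B=\tfrac12(J+H')$ with $J$ the all-ones matrix, and the identities $H'^2=2^rI-J$, $JH'=H'J=-J$, $J^2=(2^r-1)J$ give $4B^2=2^rI+(2^r-4)J$. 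Its eigenvalues are $2^r$ (with multiplicity $2^r-2$, on $\mathbf{1}^\perp$) and $2^r+(2^r-4)(2^r-1)$ (on $\mathbf{1}$), both positive for $r\geq 2$, so $B^2$ and hence $B$ is invertible; the case $r=1$ is immediate, as there $C$ is a repetition of $H_1^\perp=\{0,1\}$. Since $B\mathbf{1}=(2^{r-1}-1)\mathbf{1}$, invertibility forces $\mathbf{n}=\frac{n-w}{2^{r-1}-1}\mathbf{1}$; in particular every nonzero $v$ occurs among the columns of $G$ the same number of times $t:=(n-w)/(2^{r-1}-1)$, a positive integer.

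Finally, having all $n_v$ equal to $t$, a permutation of the coordinates turns $G$ into $[\,M_r\mid M_r\mid\cdots\mid M_r\,]$ with $t$ blocks, which is exactly the generator matrix of $t$ concatenated copies of the simplex code $H_r^\perp$; reinstating the zero coordinates removed at the start gives the stated conclusion. The one real obstacle is the invertibility of $B$ over $\QQ$: either one runs the Hadamard computation above (keeping an eye on the small-$r$ degeneracies), or one appeals to the classical fact that point--hyperplane incidence matrices of finite projective spaces are nonsingular over the rationals.
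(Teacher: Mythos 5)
The paper does not prove this statement at all: it is quoted as Bonisoli's classification with a citation to \cite{Bonisoli}, so there is no internal proof to compare yours against. Judged on its own, your argument is correct and self-contained, and it is essentially the classical counting proof of Bonisoli's theorem for the binary case. The reductions are all sound: stripping identically-zero coordinates is exactly the ``additional zero coordinates'' in the conclusion; after that every column $g_i$ of a generator matrix is a nonzero vector of $\ZZ_2^r$, and equidistance translates into the system $B\mathbf{n}=(n-w)\mathbf{1}$ for the column-multiplicity vector $\mathbf{n}=(n_v)$, with $B$ the point--hyperplane incidence matrix of $\mathrm{PG}(r-1,2)$. Your Hadamard computation checks out: with $H'=\bigl((-1)^{\langle a,v\rangle}\bigr)_{a,v\neq 0}$ one indeed has $H'^2=2^rI-J$, $JH'=H'J=-J$, $J^2=(2^r-1)J$, hence $4B^2=2^rI+(2^r-4)J$, whose eigenvalues $2^r$ and $2^r+(2^r-4)(2^r-1)$ are positive for $r\geq 2$, so $B$ is invertible over $\QQ$; together with $B\mathbf{1}=(2^{r-1}-1)\mathbf{1}$ this forces all $n_v$ equal to $t=(n-w)/(2^{r-1}-1)$, which is a positive integer because the $n_v$ are equal nonnegative integers summing to $n\geq r\geq 1$ (this also rules out the degenerate possibility $w=n$ for $r\geq 2$, a point you leave implicit but which causes no gap), and the separate $r=1$ case is handled correctly. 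Having each nonzero column with multiplicity $t$, a coordinate permutation literally turns $G$ into $[\,M_r\mid\cdots\mid M_r\,]$, so the code is the $t$-fold concatenation of the simplex code, as claimed. The only stylistic remark is that the invertibility of the point--hyperplane incidence matrix of a finite projective space over $\QQ$ is a standard fact you could simply cite, but your direct verification via the character matrix is clean and keeps the proof elementary; by contrast, the authors of this paper simply import the theorem as a black box to classify constant-weight linear codes in their cube computations.
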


This means that an $r$-dimensional linear equidistant code is equivalent to the row span of 
a maximal concatenation of the check matrix $M_r$ of $H_r$ with possibly additional zero columns. 
For instance, a $3$-dimensional linear equidistant code in $\{0,1\}^{15}$ is equivalent to the row span of $[\, \, M_3\, \, |\, \, M_3 \, \,|\, \, \vec 0 \, \,]$.

\begin{lemma}\label{lem: max halfweight code}
For $d \equiv 3 \mod 4$, let $d+1 = 2^t \cdot b$ with $t$ as big as possible. Then $t$ is the maximum dimension of a $((d+1)/2)$-weight linear equidistant code in $\{0,1\}^d$, $\{0,1\}^{d+1}$, and $\{0,1\}^{d+2}$. 
\end{lemma}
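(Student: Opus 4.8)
The plan is to split the statement into a length-independent upper bound on the dimension of any $((d+1)/2)$-weight linear equidistant code, and a concrete Bonisoli-style construction realizing that bound inside each of the three cubes. Both halves rest entirely on Bonisoli's theorem and on the fact, both quoted above, that every nonzero word of the simplex code $H_r^\perp$ has weight $2^{r-1}$. First I would fix notation: since $d \equiv 3 \bmod 4$ we have $4 \mid d+1$, so writing $d+1 = 2^t b$ with $b$ odd forces $t \ge 2$ and $(d+1)/2 = 2^{t-1} b$. I would then restate Bonisoli's theorem in the form I need: an $r$-dimensional linear equidistant code is equivalent, up to a permutation of coordinates, to a concatenation of $k \ge 1$ copies of $H_r^\perp$ followed by $z \ge 0$ zero coordinates; such a code has length $k(2^r-1)+z$, and each of its nonzero words has weight $k \cdot 2^{r-1}$.

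For the upper bound, let $D$ be any linear equidistant code of dimension $r$ all of whose nonzero words have weight $(d+1)/2$, sitting in some $\{0,1\}^n$. Bonisoli's theorem gives $(d+1)/2 = k \cdot 2^{r-1}$ for an integer $k \ge 1$. Comparing $2$-adic valuations with $(d+1)/2 = 2^{t-1} b$ and $b$ odd yields $v_2(k) + (r-1) = t-1$, hence $r = t - v_2(k) \le t$. Since this argument never refers to $n$, the bound $\dim D \le t$ holds simultaneously in $\{0,1\}^d$, $\{0,1\}^{d+1}$, and $\{0,1\}^{d+2}$.

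For the lower bound I would exhibit a $t$-dimensional example in each of the three cubes. Concatenating $b$ copies of the check matrix $M_t$ of the Hamming code $H_t$ produces a matrix with $b(2^t-1) = (d+1)-b$ columns whose row span is $t$-dimensional and, by the simplex-code weight fact, has every nonzero word of weight $b \cdot 2^{t-1} = (d+1)/2$; this row span is thus a $((d+1)/2)$-weight linear equidistant code. Appending $z$ all-zero columns leaves the dimension and the common weight unchanged while setting the length to $(d+1)-b+z$, so the choices $z = b-1$, $z = b$, and $z = b+1$ — all nonnegative because $b \ge 1$ — place such a code of dimension $t$ in $\{0,1\}^d$, $\{0,1\}^{d+1}$, and $\{0,1\}^{d+2}$ respectively. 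Combined with the upper bound, $t$ is exactly the maximum dimension in each case.

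I do not expect a serious obstacle: the heavy lifting is already carried by the quoted Bonisoli classification and the weight of the simplex code. The two points that need care are the $2$-adic valuation bookkeeping in the upper bound — which I view as the crux, elementary though it is — and verifying that the padding parameter $z$ stays nonnegative and that the padded code genuinely lands in the claimed ambient space.
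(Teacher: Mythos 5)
Your proposal is correct and follows essentially the same route as the paper: the identical construction (concatenating $b$ copies of $M_t$ and padding with $b-1$, $b$, or $b+1$ zero columns) for the lower bound, and the same Bonisoli-based weight-divisibility argument for maximality, which you phrase via $2$-adic valuations while the paper runs the equivalent computation as a contradiction from $(d+1)/2 = q\,2^{T-1}$.
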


\begin{proof}
We first show that there are $t$-dimensional linear equidistant codes of lengths $d, d+1,$ and $d+2$. Consider the matrix $M$ given by concatenating $b$ copies of the check matrix $M_{t}$. Then $M \in \{0,1\}^{t \times (b(2^{t} -1))}.$  Note that $$b(2^{t} -1) = b2^t - b = d + 1 - b.$$
Since every non-zero element in the row span of $M_t$ has weight $2^{t-1}$, every non-zero element in the row span of $M$ has weight 
$$ b2^{t-1} = (d+1)/2.$$
Thus by appending $b-1$ columns of zeros to $M$, we arrive at a matrix in $\{0,1\}^{t \times d}$ in which every non-zero element lies in $\cal J_{d,\frac{d+1}{2}}$. Appending one or two more 
zero columns creates matrices for which all non-zero elements in their row spans are contained in $\cal J_{d+1,\frac{d+1}{2}}$ or $\cal J_{d+2,\frac{d+1}{2}}$, respectively. Note that if 
$d \equiv 3$ mod $4$, then $\frac{d+1}{2}$ is even and indexes the 
extremal eigenspace of interest in $Q_d, Q_{d+1}$ and $Q_{d+2}$. 
%Hence, by Corollary~\ref{cor:constant weight}, $\ker(M)$ is an extremal combinatorial design in these graphs.

We claim that $t$ is the maximum possible dimension of a $((d+1)/2)$-weight linear equidistant code in $\{0,1\}^d$, $\{0,1\}^{d+1}$, and $\{0,1\}^{d+2}$. Let $T > t$, and suppose there is a $T$-dimensional linear equidistant code of length $d$ and weight $(d+1)/2$.  Then this code is the row span of $q$ copies of the Hamming check matrix $M_T$, possibly padded with some columns of zeros.  The weight of each row is $q 2^{T-1}$, so $(d+1)/2 = q 2^{T-1}$ implies that $d= 2^T q - 1$.  Since we also know $ d = 2^t b -1 $, it follows that 
\begin{align*}
    2^Tq -1  = d  = 2^t \cdot b -1  \iff b = 2^{T-t} q
\end{align*}
Since $T > t$, this implies that $b$ is even, which contradicts the definition of $t$. 
A similar argument also works for $d+1$ and $d+2$. 
\end{proof}

\begin{theorem} \label{thm: Qd min codes}
For each triple $(d,m,t)$ shown below, the smallest positively weighted 
extremal designs of $Q_d$ have  at most  $2^{d-t}$ elements and are obtained by choosing $\L_m$ to be last in frequency order.
\begin{enumerate}
    \item $d \equiv 0$ mod $4$: $m = d/2$ and $d=2^t \cdot b$ with $t$ maximal.
    \item $d \equiv 1$ mod $4$, $m = (d-1)/2$ and $d-1=2^t \cdot b$ with $t$ maximal.
    \item $d \equiv 3$ mod $4$, $m =  (d+1)/2$ and $d+1=2^t \cdot b$ with $t$ maximal.
\end{enumerate}
\end{theorem}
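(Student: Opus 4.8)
The plan is to handle all three congruence classes uniformly by the following scheme: in each case, identify the extremal eigenspace $\Lambda_m$, use Lemma~\ref{lem: max halfweight code} (or its obvious analogues for $d\equiv 0,1$ mod $4$) to produce a $t$-dimensional linear equidistant code whose nonzero codewords all have weight $m$, and then appeal to Corollary~\ref{cor:constant weight} to conclude that the kernel $C=\ker(M)$ of the associated check matrix is an extremal combinatorial design. Since $M\in\{0,1\}^{t\times d}$ has rank $t$ (its rows span a $t$-dimensional code), $|C|=2^{d-t}$, which gives the stated upper bound.

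First I would record that in each of the three cases the claimed index $m$ is even and is indeed (one of) the last eigenspace(s) of $Q_d$ in frequency order; this is exactly Lemma~\ref{lem:maximal eigenpolytopes of cubes}, together with Lemmas~\ref{lem: cube odd polytope max facet} and \ref{lem: even wins}, which tell us that when there is a tie for the last eigenspace we should prefer the even-indexed eigenpolytope because it yields strictly smaller designs. Second, I would construct the matrix $M$: for $d\equiv 3$ mod $4$ this is precisely the construction in the proof of Lemma~\ref{lem: max halfweight code} --- write $d+1=2^t b$ with $b$ odd, concatenate $b$ copies of the Hamming check matrix $M_t$, and pad with $b-1$ zero columns to reach length $d$; every nonzero row-span element then has weight $b\cdot 2^{t-1}=(d+1)/2=m$. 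For $d\equiv 0$ mod $4$ write $d=2^t b$ with $b$ odd, concatenate $b$ copies of $M_t$ (length $b(2^t-1)=d-b$) and pad with $b$ zero columns; the common weight is $b\cdot 2^{t-1}=d/2=m$. For $d\equiv 1$ mod $4$ write $d-1=2^t b$ with $b$ odd, concatenate $b$ copies of $M_t$ (length $d-1-b$) and pad with $b+1$ zero columns; the common weight is $b\cdot 2^{t-1}=(d-1)/2=m$. In all three cases the row span is a constant-weight code contained in $\mathcal{J}_{d,m}$.

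Third, I would invoke Corollary~\ref{cor:constant weight}: because all nonzero vectors in $\operatorname{rowspan}(M)$ lie in $\mathcal{J}_{d,m}$, the code $C=\ker(M)$ averages every eigenspace of $Q_d$ except $\Lambda_m$, so $C$ is an extremal combinatorial design; hence by Theorem~\ref{thm: gale duality and designs} a smallest positively weighted extremal design has size at most $|C|=2^{d-t}$, and choosing $\Lambda_m$ last in frequency order is what makes $C$ extremal. The main obstacle is not any single deep step but rather the bookkeeping: one must check in each congruence class that the number of padding columns is nonnegative (i.e. $b(2^t-1)\le d$, which holds since $b\le d+1\le 2^t b$ forces $b(2^t-1)=2^tb-b\le d$ in the $d\equiv 3$ case, and similarly in the others), that the resulting common weight really equals the claimed $m$, and that $m$ is even so that Bonisoli-type equidistant codes of weight $m$ can exist at all --- the parity of $m$ is where the three cases genuinely differ and where one must be careful. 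The optimality half (that $2^{d-t}$ is actually attained as a minimum) is explicitly not claimed here beyond the $d\equiv 2$ mod $4$ case already settled in Theorem~\ref{thm:d=2 mod 4 theorem}, so no lower-bound argument is needed.
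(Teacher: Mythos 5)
Your proposal is correct and follows essentially the same route as the paper: identify the even middle index $m$ via Lemmas~\ref{lem: cube odd polytope max facet} and \ref{lem: even wins}, build a $t$-dimensional constant-weight-$m$ code from concatenated Hamming check matrices padded with zero columns, and apply Corollary~\ref{cor:constant weight} to get a combinatorial extremal design of size $2^{d-t}$. The only cosmetic difference is that you re-derive the constructions for $d\equiv 0,1 \bmod 4$ by hand, whereas the paper's Lemma~\ref{lem: max halfweight code} already supplies them, since its statements for lengths $d+1$ and $d+2$ (with $d\equiv 3 \bmod 4$) are exactly the $0$ and $1 \bmod 4$ cases.
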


\begin{proof}
Note that for each $d$ shown above, the corresponding $m$ in the triple is even and 
indexes the extremal eigenspace of $Q_d$ (if there is a tie) 
that can yield the smallest positively weighted 
designs. This follows from Lemmas~\ref{lem: cube odd polytope max facet} and \ref{lem: even wins}.
By Lemma~\ref{lem: max halfweight code}, there is a maximum cardinality 
linear equidistant code $C \subset \{0,1\}^d$ of dimension $t$ and weight $m$. It then follows by the strategy in Corollary~\ref{cor:constant weight} that the dual code $C^\perp$ is then an extremal combinatorial design in $Q_d$, and $|C^\perp| = 2^{d-t}$.  
\end{proof}

We attribute Theorem~\ref{thm: Qd min codes} to Chris Lee and David Shiroma 
who discovered these bounds in an undergraduate project supervised by the authors. Following the strategy outlined in Corollary~\ref{cor:constant weight}, they discovered the construction in Bonisoli's theorem from which the result follows.  Table~\ref{tab: cube bounds small d} computes the bounds in Theorems~\ref{thm:d=2 mod 4 theorem} and \ref{thm: Qd min codes} for small values of $d$. 

\begin{table}[h]
\begin{center}
\begin{tabular}{c|c|c|c|c | c}
   $d$  & $m$ & $\dim(P_m) = {d \choose m}$ & $\#V(P_m)$  &  $|W^\ast| \leq $ &  $\# V(F^\ast) \geq$\\
    \hline
    2  & 1 & 2 & 4 & 2 & 2 \\
    \hline
    3  & 2 & 3 & $4 \cdot 2 $& 2 & 6\\
    \hline
    4  & 2 & 6 & $8\cdot 2$  & 4 & 12\\
    \hline
    5 & 2 & 10 & $16 \cdot 2$ &  8 & 24\\
    \hline
    6 & 3 & 20 & 64 & 32 & 32\\
    \hline
   7& 4 & 35 & $64 \cdot 2$ & 16 & 112\\
    \hline
    8 & 4 & 70 & $128 \cdot 2$& 32 & 224 \\
    \hline
    9 & 4 & 126 & $256 \cdot 2$ &  64 & 448\\
    \hline
     10  & 5 & 252 & 1024 &  512 & 512\\
    \hline
    11   & 6 & 462 & $1024 \cdot 2$ & 512 & 1536
\end{tabular}
\end{center}
    \caption{Bounds from Theorems  \ref{thm:d=2 mod 4 theorem},\ref{thm: Qd min codes} for small values of $d$. $W^\ast$ denotes a minimum cardinality positively weighted extremal design, and $F^\ast$ denotes a facet of $P_m$ with the maximum number of vertices including multiplicity. When $P_m$ has $N$ distinct vertices which are doubled up, we write $\#V(P_m) = N \cdot 2$. }
    \label{tab: cube bounds small d}
    \vspace{-.1 in}
\end{table}

Our main strategy in this paper has been to use the facet combinatorics of extremal eigenpolytopes to find the smallest positively weighted extremal designs in graphs. In the case of hypercubes, 
this strategy worked for 
$Q_d$ when $d \equiv 2$ mod $4$. In the other cases, it was much harder to understand the facets of the extremal eigenpolytope $P_m$, and instead we found small designs using the theory of linear codes. By Theorem~\ref{thm: gale duality and designs}, 
these small designs correspond to some faces of the extremal eigenpolytope $P_m$.

\begin{corollary}
In each of the following situations, the extremal eigenpolytope $P_m$ of $Q_d$ 
has a face containing $2^d - 2^{d-t}$ vertices:
\begin{enumerate}
    \item $d \equiv 0$ mod $4$, $m =  d/2$  $d=2^t \cdot b$ with $t$ maximal.
    \item $d \equiv 1$ mod $4$, $m =  (d-1)/2$ and $d-1=2^t \cdot b$ with $t$ maximal.
    \item $d \equiv 3$ mod $4$, $m =  (d+1)/2$ and $d+1=2^t \cdot b$ with $t$ maximal.
\end{enumerate}
\end{corollary}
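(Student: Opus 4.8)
The statement is an immediate consequence of Theorem~\ref{thm: Qd min codes} together with Theorem~\ref{thm: gale duality and designs}; the plan is simply to translate the coding-theoretic construction back into the language of the extremal eigenpolytope. First I would recall the object built in the proof of Theorem~\ref{thm: Qd min codes}: for each listed triple $(d,m,t)$, Lemma~\ref{lem: max halfweight code} furnishes a parity-check matrix $M$ of rank $t$ whose row span is a linear equidistant code all of whose nonzero vectors have Hamming weight $m$, and by Corollary~\ref{cor:constant weight} the dual code $W := \ker(M)$ is an extremal combinatorial design of $Q_d$ in the frequency order with $\L_m$ placed last. Since $\rk(M)=t$, we have $|W| = 2^{d-t}$. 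Being combinatorial, $W$ is in particular a positively weighted design averaging every eigenspace of $Q_d$ except $\L_m$, so Theorem~\ref{thm: gale duality and designs} applies: the convex hull of the subconfiguration of $\mathcal{U}_{\overline{\mathbf{k}}}$ indexed by $[n]\setminus W$ (with $n=2^d$ and $\overline{\mathbf{k}}=\{\lambda_1,\lambda_m\}$) is a face $F$ of $P_{\overline{\mathbf{k}}}$.

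Next I would identify $P_{\overline{\mathbf{k}}}$ with the eigenpolytope $P_m$, as is done implicitly throughout this section: the matrix $U_{\overline{\mathbf{k}}}$ differs from $U_m$ only by the extra all-ones row, so $P_{\overline{\mathbf{k}}}$ is the image of $P_m$ under the lift into the hyperplane $x_1=1$, an affine isomorphism that preserves the labeling of the columns of $\mathcal{U}_m$ by the vertices of $Q_d$. Under this identification $F$ becomes a face of $P_m$ indexed by the same set $[n]\setminus W$. Finally I would count: each column of $U_m$ is a $\pm 1$ vector, hence an extreme point of $P_m$ (a $\pm1$ vector is not a proper convex combination of $\pm1$ vectors, as one sees coordinatewise), so every column of $\mathcal{U}_m$ indexes a vertex of $P_m$, and $F$ therefore contains exactly $|[n]\setminus W| = 2^d - 2^{d-t}$ vertices of $\mathcal{U}_m$, counted with multiplicity --- which is the sense of ``vertices'' used in Table~\ref{tab: cube bounds small d}. (For completeness I would note that since the rows of $M$ have even weight $m$ we have $M\ones=\zeros$, so $\ones\in W$ and $W$ is closed under complementation; hence $[n]\setminus W$ is a union of coincident pairs $c(y)=c(\ones-y)$ and $F$ has $2^{d-1}-2^{d-t-1}$ distinct vertices.)

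I do not expect a real obstacle here: the entire mathematical content lives in Theorem~\ref{thm: Qd min codes} and Lemma~\ref{lem: max halfweight code}. The only points needing a sentence of care are the harmless identification $P_{\overline{\mathbf{k}}}\simeq P_m$ and the bookkeeping about multiplicities of coincident columns, both of which are routine given the earlier lemmas.
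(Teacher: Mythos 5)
Your proposal is correct and follows exactly the paper's argument: the combinatorial designs of Theorem~\ref{thm: Qd min codes} (built from the constant-weight codes of Lemma~\ref{lem: max halfweight code}) are in particular positively weighted, so Theorem~\ref{thm: gale duality and designs} converts each into a face of $P_m$ indexed by the complementary $2^d-2^{d-t}$ columns. Your extra remarks on the identification $P_{\overline{\mathbf{k}}}\simeq P_m$ and on counting coincident column pairs are accurate elaborations of details the paper leaves implicit.
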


\begin{proof}
Combinatorial designs are positively weighted designs. Thus the extremal designs of Theorem~\ref{thm: Qd min codes} provide these faces by Gale duality.
\end{proof}

We conjecture that the 
bounds in Theorem~\ref{thm: Qd min codes} are optimal.

\begin{conjecture}
The duals of the constant weight codes constructed in Lemma~\ref{lem: max halfweight code} are smallest cardinality positively weighted extremal designs in their $Q_d$.
\end{conjecture}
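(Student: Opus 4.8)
The plan is to recast the statement through Theorem~\ref{thm: gale duality and designs} and then attack it by Fourier analysis on $\ZZ_2^d$. Fix the ordering of Theorem~\ref{thm: Qd min codes}, which places the even‑indexed extremal eigenspace $\Lambda_m$ last. By Lemma~\ref{lem:computational check}(2), a positively weighted extremal design is exactly a nonzero $a\in\RR_{\ge 0}^{\{0,1\}^d}$ whose coefficients $\hat a(x):=\phi_x^\top a=\sum_{y\in\{0,1\}^d}(-1)^{x^\top y}a(y)$ satisfy $\supp(\hat a)\subseteq\{0\}\cup\J_{d,m}$; minimal such designs correspond to facets of the eigenpolytope $P_m$, and since every column of $U_m$ is a vertex of $P_m$, a smallest positively weighted extremal design has size $2^d-\nu$, where $\nu$ is the largest number of columns of $U_m$ on a single facet (Remark~\ref{rem:smallest design largest facet}). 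Theorem~\ref{thm: Qd min codes} produces such an $a$ with $|\supp(a)|=2^{d-t}$, namely the uniformly weighted dual of a maximum‑dimension constant weight code, so the conjecture is equivalent to the lower bound $|\supp(a)|\ge 2^{d-t}$ for every admissible $a$, equivalently $\nu\le 2^d-2^{d-t}$.

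The second step is a periodicity reduction. Put $V:=\langle\supp(\hat a)\rangle\le\ZZ_2^d$ and $r:=\dim V$. A one‑line computation shows $a$ is constant on cosets of $V^\perp$, so $|\supp(a)|=\ell\cdot 2^{d-r}$, where $\ell\ge 1$ is the number of cosets on which $a$ is positive, and it suffices to prove $\ell\ge 2^{r-t}$. Pushing $a$ forward to $\bar a\ge 0$ on $\ZZ_2^d/V^\perp\cong\ZZ_2^r$, the Fourier support of $\bar a$ spans $\ZZ_2^r$ and sits inside $\{0\}\cup(V\cap\J_{d,m})$. If $\ell=1$ then $\bar a$ is a multiple of a point mass, so $\hat{\bar a}$ never vanishes, forcing every nonzero vector of $V$ to have weight $m$; Bonisoli's theorem in the form of Lemma~\ref{lem: max halfweight code} then gives $r\le t$, hence $\ell=1\ge 2^{r-t}$, with equality precisely when $V$ is a \emph{maximum} constant weight code. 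This base case is exactly why the construction of Lemma~\ref{lem: max halfweight code} is the natural candidate: the minimal designs should be precisely the cosets of duals of maximum constant weight codes.

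The remaining case $\ell\ge 2$ is where the difficulty lies, and I expect it to be the main obstacle. One must show that a nonnegative function on $\ZZ_2^r$ supported on fewer than $2^{r-t}$ points, whose Fourier support spans $\ZZ_2^r$ and consists of weight‑$m$ vectors of an $r$‑dimensional subspace $V$ with $r>t$, cannot exist. The small cases $\ell=2$ and $\ell=3$ can be disposed of by direct inspection of the sign patterns of $\hat{\bar a}$, but the general case seems to require a \emph{robust} form of Bonisoli's classification: if a large fraction of the nonzero words of a binary code have weight $m$, its dimension exceeds $t$ by only a controlled amount. Equivalently, since Cauchy--Schwarz gives $|\supp(a)|\ge 2^d\,\hat a(0)^2/\sum_x\hat a(x)^2$ with equality for the Bonisoli construction, it would suffice to prove $\sum_{|x|=m}\hat a(x)^2\le(2^t-1)\,\hat a(0)^2$ for every admissible $a$. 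Establishing either estimate, robustly against the fact that $\supp(\hat a)$ need not be a subspace --- most plausibly by induction on $r-t$, restricting to a suitable hyperplane of $\ZZ_2^r$ on which admissibility persists --- is the crux; note that nonnegativity alone is far too weak, since it only yields the Donoho--Stark bound $|\supp(a)|\cdot|\supp(\hat a)|\ge 2^d$.
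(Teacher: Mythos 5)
There is a genuine gap, and in fact there has to be: the statement you are trying to prove is stated in the paper as a \emph{conjecture}, with no proof given --- the authors only remark that it would suffice to show that the faces of $P_m$ coming from the dual codes are facets containing the maximum number of vertices. Your attempt does not close this. The reduction in your first two steps is sound: the identification of admissible weight vectors $a\geq 0$ with $\supp(\hat a)\subseteq\{0\}\cup\J_{d,m}$, the observation that $a$ is constant on cosets of $V^\perp$ where $V=\langle\supp(\hat a)\rangle$ (so $|\supp(a)|=\ell\cdot 2^{d-r}$), and the $\ell=1$ base case, where the pushforward is a point mass, the Fourier transform is nowhere zero, $V$ is forced to be a constant-weight linear code, and Lemma~\ref{lem: max halfweight code} caps its dimension at $t$. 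All of that is correct and is a reasonable reformulation of what must be shown.

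But the conjecture's entire content lies in the case you leave open. For $\ell\geq 2$ you would need to rule out a nonnegative function on $\ZZ_2^r$ with fewer than $2^{r-t}$ support points whose Fourier support spans an $r$-dimensional $V$ with $r>t$ and sits in $\{0\}\cup(V\cap\J_{d,m})$; equivalently you would need the estimate $\sum_{|x|=m}\hat a(x)^2\leq(2^t-1)\,\hat a(0)^2$, or equivalently that no facet of $P_m$ contains more than $2^d-2^{d-t}$ columns of $U_m$. You explicitly flag this as the crux and offer only a heuristic (a hoped-for ``robust Bonisoli'' statement and an unspecified induction on $r-t$), and the claim that $\ell=2,3$ ``can be disposed of by direct inspection'' is asserted without any argument. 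Note also that the asserted equivalence between minimality of the construction and the facet-counting bound quietly assumes the codes' faces are facets, which is the other unproved half of the authors' sufficient condition. So what you have is a clean restatement of the open problem plus a solved degenerate case, not a proof; the conjecture remains open both in the paper and in your write-up.
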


To prove this conjecture, it would suffice to prove the the faces of $P_m$ given by 
these dual codes are (i) facets of $P_m$, and (ii) contain the most vertices among all facets of $P_m$.
We note these faces of $P_m$ contain an enormous number of vertices.  

% To prove 
% that such a face is a facet, it would suffice to show the following.

% \begin{conjecture}
% For $d$ and $m$ as in Theorem~\ref{thm: Qd min codes}, the 
% submatrix of $U_m$ consisting of columns not indexed by the dual 
% of the maximum constant weight code in Lemma~\ref{lem: max halfweight code} has 
% full row rank ${d \choose m}$.
% \end{conjecture}

We have relied on linear codes to find small designs in $Q_d$ 
when $d \not\equiv 2$ mod $4$. However, there is no reason to believe that the 
smallest, or all minimal, positively weighted extremal designs in such $Q_d$ 
are codes. Indeed, when $d \not\equiv 2$ mod $4$, there are minimal positively weighted extremal designs of $Q_d$ that are not isomorphic to linear codes or their complements.

% \begin{lemma}
% When $d \not\equiv 2$ mod $4$, there are minimal positively weighted extremal designs of $Q_d$ that are not isomorphic to linear codes or their complements.
% \end{lemma}

\begin{example}
%When $d=5 we have m=2$. 
The extremal eigenpolytope $P_2$ of $Q_5$ has dimension $10={5 \choose 2}$. 
It has $16$ vertices each labeled by two columns of $U_2$ 
since $c(y) = c(\ones-y)$ for all 
$y \in \{0,1\}^5$. This polytope has $56$ facets that come in two symmetry classes; $16$ of them 
are simplices ($\Delta_9$) each containing $20=2 \cdot 10$ columns of $U_2$ as vertices, 
while the remaining $40$ facets each have $24=2 \cdot 12$ vertices.
The minimal design complementary to a simplex facet contains $32- 20 = 12$ vertices of $Q_5$.  
A linear code of length $5$ must have size $2^t$ for $t \in [5]$. Since neither $12$ nor $32-12$ are powers of $2$, such a design is not isomorphic to a linear code or its complement. 
\end{example}

\printbibliography

\end{document}